\def\namedlabel#1#2{\begingroup
   \def\@currentlabel{#2}%
   \label{#1}\endgroup
}
\theoremstyle{plain}
\newtheorem{proposition}{Proposition}[section]
	\crefname{IntroductionClaim}{proposition}{propositions}
\newtheorem{lemma}{Lemma}[section]
\newtheorem{theorem}{Theorem}[section]
\newtheorem*{theorem*}{Theorem}
\newtheorem{corollary}{Corollary}[section]
\theoremstyle{definition}
\newtheorem{example}{Example}
\newtheorem{definition}{Definition}[section]
\theoremstyle{remark}
\newtheorem{remark}{Remark}[section]
\newcounter{tmp}
\newcounter{constantCounter}\setcounter{constantCounter}{1}
\newcommand{\cste}{{C_{\arabic{constantCounter}}}}
\newcommand{\ncste}{\stepcounter{constantCounter}\cste}
\let\expandafter\oldproof\csname\string\proof\endcsname
\let\oldendproof\endproof
\renewenvironment{proof}[1][\proofname]{%
  \oldproof[\setcounter{constantCounter}{0}{#1}.]%
}{\setcounter{constantCounter}{0}\oldendproof}
\title[Desingularization of a steady vortex pair in the lake equation]{Desingularization of a steady vortex pair\\in the lake equation}
\date{November 16, 2017}
\keywords{Lake model, rearrangements, vortex pair, singular vortex, desingularization, asymptotic}
\subjclass[2010]%
\author{Justin Dekeyser}
\address[Justin Dekeyser]{Université~catholique~de~Louvain,
Département~de~Mathématique,\newline
2~Chemin~du~Cyclotron, 1348~Louvain-La-Neuve, \textsc{Belgium}}
\email[Justin Dekeyser]{Justin.Dekeyser@uclouvain.be}
\newcommand{\reals}{\mathbb{R}}
\newcommand{\plane}{\reals^2}
\newcommand{\integers}{\mathbb{N}}
\DeclareMathOperator{\du}{d}
\newcommand{\dif}{{\,\du}}
\newcommand{\scalarproduct}[3]{({#1}|{#2})_{#3}}
\newcommand{\norm}[2]{\Vert{#1}\Vert_{#2}}
\newcommand{\domain}{\Omega}
\newcommand{\depth}{\textrm{b}}
\newcommand{\closure}[1]{\overline{#1}}
\newcommand{\curve}{\mathcal{C}}
\newcommand{\nbislands}{m}
\newcommand{\capacity}{\textrm{cap}}
\newcommand{\lebesguemeasure}{\dif\textrm{m}}
\newcommand{\mumeasure}{\dif\mu}
\newcommand{\gradient}{\nabla}
\newcommand{\flipgradient}{\gradient^\perp}
\newcommand{\largefunctionspace}{\mathcal{H}}
\newcommand{\functionspace}{\largefunctionspace_0}
\newcommand{\smoothfunctions}{C^1_c}
\newcommand{\divergence}{\textrm{div}}
\newcommand{\curl}{\textrm{curl}}
\newcommand{\circulation}{\textrm{c}}
\newcommand{\staticflow}{\psi}
\newcommand{\positivepart}[1]{\left(#1\right)_+}
\newcommand{\negativepart}[1]{\left(#1\right)_-}
\newcommand{\kernel}{\textrm{Ker}}
\newcommand{\vortex}{\zeta}
\newcommand{\vortexoperator}{\mathfrak{K}}
\newcommand{\rectifycirculation}{\mathfrak{H}}
\newcommand{\boundary}{\partial}
\newcommand{\includesin}{\hookrightarrow}
\newcommand{\laplacian}{-\Delta}
\newcommand{\greenlaplace}{\textrm{g}}
\newcommand{\rectifykernel}{\textrm{R}}
\newcommand{\regulargreenlaplace}{\textrm{H}}
\newcommand{\distance}{\textrm{d}}
\newcommand{\diameter}{\textrm{diam}}
\newcommand{\energy}{\textrm{E}}
\newcommand{\flow}{\Psi}
\newcommand{\rearrangement}{\textrm{Rearg}}
\newcommand{\truncation}[1]{\left[#1\right]}
\newcommand{\symmetrizearoundpoint}[2]{{#1}\sharp{#2}}
\newcommand{\indicator}[1]{\chi_{#1}}
\newcommand{\vortexstrength}{\mathcal{S}}
\newcommand{\threshold}{\varsigma}
\newcommand{\positivefirstorderflow}[1]{\mathcal{T}^+_{#1}}
\newcommand{\negativefirstorderflow}[1]{\mathcal{T}^-_{#1}}
\newcommand{\error}{\Theta}
\newcommand{\externalflowcoefficient}{\lambda}
\begin{document}

\begin{abstract}
We construct a family of steady solutions
of the lake model perturbed by some small Coriolis force, that converge
to a singular vortex pair. The desingularized solutions are
obtained by maximization of the kinetic energy over a class
of rearrangements of sign changing functions. The precise
localization of the asymptotic singular vortex pair
is proved to depend on the depth function and the Coriolis parameter,
and it is independent on the geometry of the lake domain.
We apply our result to construct a singular rotating vortex pair
in a rotation invariant lake.
\end{abstract}
\maketitle

%\tableofcontents 

\section*{Introduction}
\begingroup
\setcounter{tmp}{\value{theorem}}
\setcounter{theorem}{0} %assign desired value to theorem counter
\renewcommand\thetheorem{\Alph{theorem}}

\subsection*{Statement of the problem}
The lake equations arise from the incompressible 3D Euler equations in a regime where the typical velocity magnitude
is small in comparison to the magnitude of gravity waves (small Froude number regime $\textrm{Fr}\ll 1$, see also~\cite{CamassaHolmLevermore}).
Mathematically, a lake can be modeled as a planar open set $\domain\subseteq\plane$ together with a positive depth function $\depth$.
The \emph{velocity field} $v:\reals\times\domain\to\plane$ and the \emph{pressure field} $p:\reals\times\domain\to\reals$
are governed by the system of equations
	\[ \begin{cases}
		\divergence\big( \depth v \big) = 0
			& \text{on}\ \reals\times\domain , \\
		\partial_tv + (v\cdot\gradient)v + fv^\perp = -\gradient p
			& \text{on}\ \reals\times\domain , \\
		\depth v\cdot\hat{\eta} = \nu
			& \text{on}\ \reals\times\boundary\domain .
	\end{cases} \]
Here $\hat{\eta}:\boundary\domain\to\plane$ is the unit outward normal to $\boundary\domain$, $(v_1,v_2)^\perp=(-v_2,v_1)$,
$f:\domain\to\reals$ is a Coriolis parameter, and $\nu:\boundary\domain\to\reals$ is a penetration condition.
When the depth function is constant, $\depth\equiv 1$, the system reduces to the incompressible 2D Euler equations.
The velocity field $v$
in the lake model
may be understood as the horizontal velocity of a column water, whose total mass may vary according to the depth of the lake~\cite{CamassaHolmLevermore}.
Global well-posedness of the lake equations has been studied by
Levermore~\&~Oliver~\&~Titi~\cite{Titi}, Lacave~\&~Nguyen~\&~Pausader~\cite{LacavePausaderNguyen}, Munteanu~\cite{Munteanu},
Huang~\&~Chaocheng~\cite{Huang}.

The problem we study in this article is the asymptotic behavior of steady velocity fields that are constructed as maximizers
of the kinetic energy
	\[ E(v_\epsilon) = \frac{1}{2}\int_\domain|v_\epsilon|^2\mumeasure ,\quad \mumeasure=\depth\lebesguemeasure ,\]
where $\lebesguemeasure$ is the Lebesgue measure in the plane $\plane$; in the regime where the vorticity in the lake
vanishes: $\mumeasure\big(\{\text{vortex exists}\}\big)\to 0$ as $\epsilon\to 0$. Before formulating more precisely this
problem, let us give some more motivation.

Let first consider the simplest case where the Coriolis force is null: $f=0$; and the topography does not vary: $\depth\equiv 1$.
Introducing the vorticity $\omega=\curl(v)$ (and up to prescribe circulation conditions on $\boundary\domain$), one may think of $v$ as
$v=\curl^{-1}(\omega)$, which allows one to think off the energy as a function of $\omega$ only:
	\[ E(v) \equiv E(\omega) .\]
Moreover, applying $\curl$ on the evolution equation (second equation in the lake system), we get the transport equation
	\[ \partial_t\omega + \scalarproduct{v}{\gradient\omega} = 0 .\]
These observations were the starting point for
Arnold~\cite{Arnold} and Benjamin~\cite{Benjamin} to suggest that any steady
solution of the 2D Euler equations should be a critical point of the energy under the following constraint:
if $\omega$ solves the steady transport equation: $\scalarproduct{\curl^{-1}(\omega)}{\gradient\omega}{\plane}=0$; then
for every function $D:\reals\to\text{Diff}_{\lebesguemeasure}(\domain)$ valued in the set of $\lebesguemeasure$-measure preserving diffeomorphisms
of $\domain$ with $D(0)=\text{Id}_{\domain}$, the energy functional
	\[ E : \reals\to\reals : E(t) := E\big( \omega\circ D_t^{-1} \big) \]
has a critical point at $t=0$. On the other hand, any vortex constructed as
$\omega_t = \omega\circ D_t^{-1}$
satisfies the infinite number of constraints:
	\[ \lebesguemeasure\big( \{\omega_t\geq \lambda \} \big) = \lebesguemeasure\big( \omega\geq \lambda \} \big) ,\quad \forall\lambda\in\reals .\]
We say that such a field $\omega_t$ is a $\lebesguemeasure$-rearrangement of $\omega$.
The notion of rearrangement is a relaxed condition in comparison to the identity $\omega_t=\omega\circ D_t^{-1}$, since the
regularity of the transformation $D$ becomes out of concern.
From this motivation, the following energy maximization principle has been extensively investigated by
Arnold~\cite{Arnold}, Benjamin~\cite{Benjamin} and Burton~\citelist{\cite{BurtonRearrangementOfFunctions}\cite{BurtonSteadyConfiguration}\cite{BurtonVariationalProblems}}:
\textit{Find a maximizer of the energy $E$ restricted on the set of all $\lebesguemeasure$-rearrangements of a given profile function $\omega^\star$.}
The asymptotic of maximizers as $\lebesguemeasure\big(\{\omega_\epsilon\neq 0\}\big)=\epsilon^2,\epsilon\to 0$,
has been studied by
Turkington~\citelist{\cite{TurkingtonSteady1}\cite{TurkingtonSteady2}\cite{TurkingtonEvolution}}, Turkington~\&~Friedman~\cite{TurkingtonFriedman},
and Elcrat~\&~Miller~\cite{ElcratMiller} when $\omega_\epsilon$ is non sign changing:
$\omega_\epsilon\geq 0$. The authors have shown that the family of maximizers $\{\omega_\epsilon:\epsilon>0\}$ tends to a Dirac mass as $\epsilon\to 0$,
and the concentration point is chosen according to the geometry of $\boundary\domain$
(see also~\citelist{\cite{Lin1}\cite{Lin2}\cite{TurkingtonEvolution}\cite{MarchioroPulvirenti}} for similar results
for the time-dependent problem, and \citelist{\cite{VanSchaftingenSmets}\cite{VanSchaftingenValeriola}} for similar results based on
other approaches).

The situation appears to be different in the lake model, when $\depth$ is not constant.
In fact, Richardson~\cite{Richardson} has shown that the motion of a singular non sign changing vortex
is led by the depth function $\depth$, independently (at leading order) of the geometry of $\boundary\domain$.
More precisely, Richardson's formula states that the motion of the center of mass $Z_\epsilon$ of a non sign changing potential vortex $\vortex_\epsilon$
with $\mumeasure\big(\{\vortex_\epsilon>0\}\big)=\epsilon^2$,
follows the evolution law
	\[ \frac{d}{dt}Z_\epsilon(t) = \frac{1}{2\pi}\flipgradient\depth(Z_\epsilon(t))\ \log\frac{1}{\epsilon}\vortexstrength_\epsilon + \mathcal{O}(\vortexstrength_\epsilon) ,\]
with $\displaystyle\vortexstrength_\epsilon=\int_\domain\vortex_\epsilon\mumeasure$.
Note that an application of $\depth^{-1}\curl$ on the lake evolution equation yields the new transport equation
	\[ \partial_t\vortex + \scalarproduct{v}{\gradient\vortex} = 0 ,\]
where $\vortex=\depth^{-1}\big(\curl(v)-f\big)$ is the \emph{potential vortex field} associated with the velocity $v$, where the external contribution $f$ has been removed.
Following the lines of thought of Arnold and Benjamin, we expect a variational principle of the following form:
\textit{Steady solutions with prescribed constraints on the $\mumeasure$-measure of their super level sets, can be constructed by
maximization of the energy.}

\subsection*{Formal statement and results}
A measurable function $\vortex:\domain\to\reals$ may be decomposed in a positive part $\positivepart{\vortex}$
and a negative part $\negativepart{\vortex}$, both measurable.
Let us fix a distribution function
	\[ D : \reals^+ \to [0,\mumeasure(\domain)] \]
normalized as
	\[ \int_{\reals^+}D(t)\dif t = 1 ,\]
and such that there exists $p>1$ with
	\[ \int_{\reals^+}t^pD(t)\dif t < +\infty . \]
We also fix $\tau\in[0,1]$.
We say that a family of measurable functions $\big\{\vortex_\epsilon:\epsilon>0\big\}$ satisfies the constraint~\eqref{distributionconstraint}
if, for all $\epsilon>0$, we have
	\begin{equation}\label{distributionconstraint}\tag{D}
		\mumeasure\big( \{\positivepart{\vortex_\epsilon}\geq\lambda \} \big) = \frac{\epsilon^2}{\delta}D\bigg( \frac{\epsilon^2\lambda}{\delta\tau}\,\log\frac{1}{\epsilon} \bigg),
		\qquad
		\mumeasure\big( \{\negativepart{\vortex_\epsilon}\geq\lambda \} \big) = \frac{\epsilon^2}{\delta}D\bigg( \frac{\epsilon^2\lambda}{\delta(1-\tau)}\,\log\frac{1}{\epsilon} \bigg),
	\end{equation}
where $\delta = \sup\limits_{\lambda>0}D(\lambda)$,
so that we always have
$\mumeasure\big( \{\positivepart{\vortex_\epsilon}> 0\} \big) \leq \epsilon^2$ and
$\mumeasure\big( \{\negativepart{\vortex_\epsilon}> 0\} \big) \leq \epsilon^2$.
It is also easy to check that $\vortex_\epsilon\in L^p(\domain,\mumeasure)$ and
	\[ \int_\domain\positivepart{\vortex_\epsilon}\mumeasure = \frac{\tau}{\log\frac{1}{\epsilon}} ,
		\quad\text{and}\quad
	\int_\domain\negativepart{\vortex_\epsilon}\mumeasure = \frac{1-\tau}{\log\frac{1}{\epsilon}} .\]
The above growth factor $\log\frac{1}{\epsilon}$ is motivated by Richardson's formula.
The energy $\energy_\epsilon$ we maximize is given by
	\[ \energy(\vortex) = \frac{1}{2}\int_\domain \big( \vortexoperator\vortex + \flow \big)\vortex\mumeasure ,\]
where $flow$ is the flow that induces the Coriolis vorticity $f$: thus $\curl\big(\depth^{-1}\flipgradient\flow\big)=\depth f$;
and $\vortexoperator\vortex$ is the flow that induces the ``internal vorticity'':
$\curl\big(\depth^{-1}\flipgradient\vortexoperator\vortex\big)=\depth\vortex$.
We prove the following theorem:
\begin{theorem}\label{theoremSteadyIntroduction}
Assume that $\domain\subseteq\plane$ is a (simply connected) bounded domain of class $C^1$ and assume that $\depth\in C(\closure{\domain})\cap W^{1,\infty}_{\text{loc}}(\domain)$
with $\inf_\domain\depth>0$, or $\depth=\phi^\alpha$ for some $\alpha>0$ and $\phi$ a regularization of the distance at the boundary $\boundary\domain$.
Let $\flow\in C(\closure{\domain})\cap W^{1,\infty}_{\text{loc}}(\domain)$.
There exists $\lambda_0>0$ depending only on $\tau,\flow$ and $\depth$ such that, for all $\lambda\in(0,\lambda_0)$,
there exists a family $\big\{ \vortex_\epsilon\in L^p(\domain,\mumeasure):\epsilon>0 \big\}$ of steady solutions for the lake equations,
obtained by maximization of the energy $\energy$ over the class of all functions
that satisfy constraint~\eqref{distributionconstraint}.

Furthermore, if $\tau>0$, the only possible accumulation points of $\{\positivepart{\vortex_\epsilon}:\epsilon>0\}$ as $\epsilon\to 0$, in the sense of
convergence of probability measures on $\domain$,  are Dirac masses $\delta_{x^\star}$ with
	\[ \frac{\tau\depth(x^\star)}{4\pi} + \lambda\flow(x^\star) = \sup_{\domain}\bigg\{ \frac{\tau\depth}{4\pi} + \lambda\flow \bigg\} ,\]
and if $\tau<1$, the only possible accumulation points of $\{\negativepart{\vortex_\epsilon}:\epsilon>0\}$ as $\epsilon\to 0$, in the sense of
convergence of probability measures on $\domain$,  are Dirac masses $\delta_{x_\star}$ with
	\[ \frac{(1-\tau)\depth(x_\star)}{4\pi} - \lambda\flow(x_\star) = \sup_{\domain}\bigg\{ \frac{(1-\tau)\depth}{4\pi} - \lambda\flow \bigg\} .\]
\end{theorem}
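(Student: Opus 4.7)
The plan is to prove Theorem~\ref{theoremSteadyIntroduction} in three steps: existence by the direct method on the rearrangement class~\eqref{distributionconstraint}, a sharp two-sided asymptotic expansion of the maximal energy, and identification of the accumulation measures from this expansion combined with Green-function estimates. The main obstacle, explained at the end, is a uniform upper bound on the quadratic self-interaction of any admissible profile in terms of its $\mumeasure$-barycenter.

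For the existence step, I would verify that the admissible set $\mathcal{A}_\epsilon$ of functions satisfying~\eqref{distributionconstraint} is non-empty, uniformly bounded in $L^p(\domain,\mumeasure)$, and sequentially weakly closed in $L^p$; the latter is the invariance of the distributions of the positive and negative parts under weak limits on a rearrangement class (Burton). The quadratic form $\vortex\mapsto\int_\domain\vortexoperator\vortex\cdot\vortex\,\mumeasure$ is weakly continuous because $\vortexoperator$ factors through a compact embedding, and the linear term $\int\flow\,\vortex\,\mumeasure$ is trivially so; the direct method then produces a maximizer $\vortex_\epsilon$. Burton's Lagrange-multiplier principle further yields that $\vortex_\epsilon$ is a monotone function of its own total stream function $\vortexoperator\vortex_\epsilon+\lambda\flow$ on the supports of $\positivepart{\vortex_\epsilon}$ and $\negativepart{\vortex_\epsilon}$, which by construction of $\vortexoperator$ and $\flow$ is the steady-state condition for the lake system.

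For the lower bound on $\sup_{\mathcal{A}_\epsilon}\energy$, I would evaluate $\energy$ on an explicit competitor $\tilde\vortex_\epsilon$ built from the profile of~\eqref{distributionconstraint} concentrated on two $\mumeasure$-disks of measure $\epsilon^2$ centered at arbitrary points $a,b\in\domain$. Decomposing the Green kernel of $\vortexoperator$ into a singular part $-\frac{1}{2\pi\depth(x)}\log|x-y|$ and a bounded remainder, the self-interaction of each disk contributes $\frac{\tau^2\depth(a)}{4\pi\log(1/\epsilon)}$ and $\frac{(1-\tau)^2\depth(b)}{4\pi\log(1/\epsilon)}$ at leading order, the cross term is of lower order when $a\neq b$, and the external contribution is $\frac{\lambda\tau\flow(a)-\lambda(1-\tau)\flow(b)}{\log(1/\epsilon)}+o(1/\log(1/\epsilon))$. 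Optimizing over $a,b$ yields a lower bound of the form $\frac{\tau M_+ +(1-\tau)M_-}{\log(1/\epsilon)}+o(1/\log(1/\epsilon))$, with $M_\pm$ the suprema announced in the theorem.

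The matching upper bound is the main obstacle: it must hold uniformly along $\vortex_\epsilon$. I would prove a Richardson-type localization lemma stating that for any $w\geq 0$ supported on a set of $\mumeasure$-measure at most $\epsilon^2$ and with mass $\tau/\log(1/\epsilon)$,
\[ \tfrac{1}{2}\int_\domain\vortexoperator w\cdot w\,\mumeasure\ \leq\ \frac{\tau^2\depth(x_w)}{4\pi\log(1/\epsilon)}+o\!\left(\tfrac{1}{\log(1/\epsilon)}\right), \]
where $x_w$ is any $\mumeasure$-barycenter of $w$. The proof consists in symmetrizing $w$ around $x_w$ and isolating the logarithmic part of the kernel at the barycenter, using continuity of $\depth$ to replace $\depth$ by $\depth(x_w)$ on the support of $w$; extra care is needed near $\boundary\domain$ when $\depth=\phi^\alpha$ vanishes there, where a localized weighted estimate on the regular part of the kernel is required. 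Applying the lemma to $\positivepart{\vortex_\epsilon}$ and $\negativepart{\vortex_\epsilon}$ separately gives
\[ \log(1/\epsilon)\,\energy(\vortex_\epsilon)\ \leq\ \tau\!\left[\tfrac{\tau\depth(x^+_\epsilon)}{4\pi}+\lambda\flow(x^+_\epsilon)\right] + (1-\tau)\!\left[\tfrac{(1-\tau)\depth(x^-_\epsilon)}{4\pi}-\lambda\flow(x^-_\epsilon)\right] + o(1), \]
with $x^\pm_\epsilon$ the barycenters of the positive and negative parts. Comparison with the lower bound forces $x^\pm_\epsilon$ to accumulate on the maxima of the respective profiles; a concentration-compactness argument, together with the smallness $\lambda<\lambda_0$ which rules out configurations where the two centers coincide or the mass splits between several separated maxima, then yields the asserted Dirac accumulation.
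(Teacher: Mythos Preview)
Your outline has the right overall shape (existence via Burton, competitor lower bound, matching upper bound, identification of limits), but the heart of the upper bound is wrong as stated, and this is the step on which the whole asymptotic argument hinges.

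\textbf{The barycenter lemma is false.} Take $w\ge 0$ with $\mumeasure$-support $\le\epsilon^2$ and mass $\tau/\log(1/\epsilon)$, split equally between tiny balls around two points $a,b$ where $\depth$ is near $\sup_\domain\depth$, chosen so that the midpoint $x_w$ lies where $\depth$ is small (this is possible in both regimes of the theorem). The self-energy of $w$ is of order $\tau^2\depth(a)/(8\pi\log(1/\epsilon))$, while your claimed bound is $\tau^2\depth(x_w)/(4\pi\log(1/\epsilon))+o(1/\log(1/\epsilon))$, which can be made arbitrarily small. More structurally: $\mumeasure$-symmetrization around $x_w$ is \emph{not} Schwarz symmetrization for Lebesgue measure, so Riesz--Sobolev does not give the monotonicity you invoke, and ``replace $\depth$ by $\depth(x_w)$ on the support'' is unjustified before you know the support has small diameter---which is precisely what you are trying to prove. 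Even if a correct barycenter bound existed, convergence of barycenters would not by itself give convergence of $\dif\positivepart{\vortex_\epsilon}$ to a Dirac mass.

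\textbf{What the argument actually needs.} The paper's route avoids barycenters entirely. First one proves a pointwise upper bound for the leading partial flow that holds for \emph{every} rearrangement, using only the $L^p$ norm control from~\eqref{distributionconstraint} and H\"older on the logarithmic kernel: this gives $\energy_\epsilon(\vortex_\epsilon)\le \tau\vortexstrength_\epsilon\int(\tfrac{\tau\depth}{4\pi}\log\tfrac{1}{\epsilon}\vortexstrength_\epsilon+\externalflowcoefficient_\epsilon\flow)\dif\positivepart{\vortex_\epsilon}+\cdots$. Combined with the competitor lower bound, this forces most of the $\dif\positivepart{\vortex_\epsilon}$-mass onto the superlevel set $D^\kappa_\epsilon$ of the partial flow. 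The decisive step---which your outline does not contain---is then a separate concentration theorem: for $\kappa$ small enough and $\lambda<\lambda_0$ (this is where the smallness of $\lambda$ enters quantitatively), one shows $\diameter(D^\kappa_\epsilon)\le\epsilon^\sigma$ by a Turkington-type argument (if two points of $D^\kappa_\epsilon$ were far apart, each would have to carry more than half the mass in its $\epsilon^{1-s}$-neighbourhood, a contradiction). Only after this diameter bound can one pass to Dirac limits and read off the maximisation condition.

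A minor point on existence: the rearrangement class is not weakly closed in $L^p$; its weak closure is its convex hull. You need Burton's theorem that a strictly convex functional attains its maximum over the weak closure at an extreme point, i.e.\ a genuine rearrangement, rather than claiming weak closedness.
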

We briefly mention that \cref{theoremSteadyIntroduction} will actually be proved for non necessarily simply connected domains,
provided coherent circulation conditions are prescribed. Also,
\cref{theoremSteadyIntroduction} will be proved under slightly more general regularity assumptions on
$\domain$ and $\depth$, and covers two situations encountered in the literature:
the case of a non vanishing depth function $\depth$~\citelist{\cite{Huang}\cite{Titi}}, and the case of a degenerated depth function that vanishes as a polynomial of some
regularized distance at the boundary~\citelist{\cite{BreshMetivier}\cite{LacavePausaderNguyen}}.
Although \cref{theoremSteadyIntroduction} is stated for regular domains, we will be in position to deal with irregular domains as well.

In the case of a non perturbed lake ($\flow=0$),
our energy maximizers look like singular vortex pairs as $\epsilon\to 0$, both located near a point of maximal depth $\depth$.
If we add some small perturbation $\lambda\flow$, the two parts of the pairs separate from each others, according to $\flow$.
The condition that the \emph{vortex strength}
	\[ \int_\domain\vortex_\epsilon\mumeasure = (2\tau-1)\frac{1}{\log\frac{1}{\epsilon}} \]
vanishes as $\big( \log\frac{1}{\epsilon} \big)^{-1}$ is expected from Richardson's law. In the lake equations,
this scale length is the typical scale length
for the dynamic of the vortex to become physically relevant.

Taking advantage of radial symmetries, \cref{theoremSteadyIntroduction} may be used to prove the following time-dependent result:
\begin{theorem}\label{theoremRotatingIntroduction}
Assume that $\domain=B(0,1)$ and $\depth\in C(\closure{\domain})\cap W^{1,\infty}_{\text{loc}}(\domain)$, $b(x)=b(|x|^2)$, is a radial function
with $\inf_\domain\depth>0$, or $\depth=\phi^\alpha$ for some $\alpha>0$ and $\phi$ a regularization of the distance at the boundary $\boundary B(0,1)$.
There exists $\nu_0>0$ depending only on $\tau\in[0,1]$ such that, for all $\nu\in(0,\nu_0)$,
there exists a family of curves $\big\{ \vortex_\epsilon\in C(\reals,L^p(\domain,\mumeasure)):\epsilon>0 \big\}$
solving the time-dependent lake equations with the additional property that
for each $t\in\reals$; and the vortex $\vortex_\epsilon(t)$ at time $t$ is obtained from $\vortex_\epsilon(0)$ by a rotation of clockwise angle $\nu t$,
and satisfies constraint~\eqref{distributionconstraint}.

Furthermore, if $\tau>0$, the only possible accumulation points of $\{\positivepart{\vortex_\epsilon(0)}:\epsilon>0\}$ as $\epsilon\to 0$, in the sense of
convergence of probability measures on $\domain$,  are Dirac masses $\delta_{x^\star}$ with
	\[ \frac{\tau\depth(x^\star)}{4\pi} + \frac{\nu}{2}\int^{|x^\star|^2}_0\depth(s)\dif s
		= \sup_{z\in\domain}\bigg\{ \frac{\tau\depth(z)}{4\pi} +\frac{\nu}{2}\int^{|z|^2}_0\depth(s)\dif s \bigg\} ,\]
and if $\tau<1$, the only possible accumulation points of $\{\negativepart{\vortex_\epsilon}:\epsilon>0\}$ as $\epsilon\to 0$, in the sense of
convergence of probability measures on $\domain$,  are Dirac masses $\delta_{x_\star}$ with
	\[ \frac{(1-\tau)\depth(x_\star)}{4\pi} - \frac{\nu}{2}\int^{|x^\star|^2}_0\depth(s)\dif s
		= \sup_{z\in\domain}\bigg\{ \frac{(1-\tau)\depth(z)}{4\pi} - \frac{\nu}{2}\int^{|z|^2}_0\depth(s)\dif s \bigg\} .\]
\end{theorem}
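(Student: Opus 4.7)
The plan is to reduce \cref{theoremRotatingIntroduction} to \cref{theoremSteadyIntroduction} by viewing a rigidly rotating time-dependent solution as a steady solution in the co-rotating frame, with the effective Coriolis-type perturbation produced by the streamfunction of rigid rotation. Concretely, I introduce
\[ \flow(x) := \tfrac{1}{2}\int_0^{|x|^2}\depth(s)\dif s , \]
which is radial, inherits from $\depth$ the regularity class permitted by \cref{theoremSteadyIntroduction}, and satisfies $\gradient\flow(x)=\depth(|x|^2)\,x$ and hence $\flipgradient\flow(x)=\depth(|x|^2)\,x^\perp$, so that $\depth^{-1}\flipgradient\flow$ is exactly the velocity field of unit-speed rigid rotation on the radial lake. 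Applying \cref{theoremSteadyIntroduction} with this choice of $\flow$ and with $\lambda=\nu$ yields, for every $\nu\in(0,\nu_0)$, a family $\{\vortex_\epsilon^\star\}$ of steady energy maximizers satisfying constraint~\eqref{distributionconstraint}, together with the Euler--Lagrange statement that $\vortex_\epsilon^\star$ is transported to itself by the velocity $\depth^{-1}\flipgradient(\vortexoperator\vortex_\epsilon^\star+\nu\flow)$.

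Next, I define the candidate time-dependent vortex by
\[ \vortex_\epsilon(t,x) := \vortex_\epsilon^\star(R_{\nu t}\,x) , \]
where $R_\theta$ denotes counter-clockwise rotation by the angle $\theta$, so that $\vortex_\epsilon(t,\cdot)$ is obtained from $\vortex_\epsilon^\star$ by a clockwise rotation of angle $\nu t$. Because both $\domain=B(0,1)$ and $\depth$ are rotation invariant, constraint~\eqref{distributionconstraint} is preserved at every time, and the velocity reconstructed from $\vortex_\epsilon(t,\cdot)$ by the lake operator is $v_\epsilon(t,x)=R_{-\nu t}\,\tilde v_\epsilon(R_{\nu t}x)$ with $\tilde v_\epsilon:=\depth^{-1}\flipgradient\vortexoperator\vortex_\epsilon^\star$. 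A chain-rule computation based on $\frac{d}{dt}(R_{\nu t}x)=\nu(R_{\nu t}x)^\perp$ then converts the lab-frame transport equation $\partial_t\vortex_\epsilon+v_\epsilon\cdot\gradient\vortex_\epsilon=0$ into the rotating-frame identity
\[ \big(\tilde v_\epsilon(y)+\nu y^\perp\big)\cdot\gradient\vortex_\epsilon^\star(y)=0,\qquad y=R_{\nu t}x , \]
which is precisely the Euler--Lagrange condition of \cref{theoremSteadyIntroduction} for $\vortex_\epsilon^\star$ once we identify $\nu y^\perp=\nu\,\depth^{-1}\flipgradient\flow$. The divergence-free condition $\divergence(\depth v_\epsilon)=0$ and the boundary/circulation conditions survive the rotation thanks to the radiality of $\depth$ and to $R_\theta\partial B(0,1)=\partial B(0,1)$, while the centrifugal contribution $\nu^2|x|^2/2$ is a gradient and is absorbed in the Bernoulli pressure.

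The asymptotic localization of the positive and negative parts of $\vortex_\epsilon(0)=\vortex_\epsilon^\star$ is then inherited from \cref{theoremSteadyIntroduction} by substituting $\lambda\flow(x)=\tfrac{\nu}{2}\int_0^{|x|^2}\depth(s)\dif s$ into the two supremum characterizations, which produces exactly the formulae displayed in \cref{theoremRotatingIntroduction}. The main technical step, and the source of most of the bookkeeping, is the rotating-frame change of variables identifying the full time-dependent lake system (transport, divergence, boundary, circulation and pressure) with the steady variational problem of \cref{theoremSteadyIntroduction}; this works only because both $\depth$ and $\domain$ are rotation invariant and because $\flow$ has been tuned to be precisely the streamfunction of rigid rotation in the lake.
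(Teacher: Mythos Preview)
Your proposal is correct and follows essentially the same route as the paper: define the external flow $\flow(x)=\tfrac12\int_0^{|x|^2}\depth(s)\dif s$ so that $\depth^{-1}\flipgradient(\nu\flow)=\nu x^{\perp}$, apply the steady result (\cref{theoremSteadyIntroduction}/\cref{mainsteadystheorem}) with this $\flow$, and then set $\vortex_\epsilon(t)=\vortex_\epsilon^\star\circ R_{\nu t}$, using the rotational invariance of $\depth$ and $\domain$ together with $\vortexoperator(\vortex\circ R_\theta)=(\vortexoperator\vortex)\circ R_\theta$ to convert the transport equation into the steady Euler--Lagrange identity. The paper carries out exactly this computation in weak form (a change of variables in space plus an integration by parts in time), and reads off $\nu_0$ from the smallness condition on $\flow$ in \cref{theoremSteadyIntroduction}; your sketch matches this step for step.
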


\subsection*{Method and insights of the proof}
The condition $\divergence\big( \depth v\big)=0$ together with $\curl(v)=\depth\vortex$ motivates to construct $v$ as
$v=\depth^{-1}\flipgradient\psi$ with $\psi$ such that
	\[ -\divergence\Big( \depth^{-1}\gradient\psi \Big) = \depth\vortex .\]
This is an elliptic equation which may be solved on $\domain$, provided circulation conditions are prescribed.
The function framework for this equation is discussed in the \textit{first section}.
We prove that the solution $\psi$ constructed above
depends on $\vortex$ through an operator $\vortexoperator$. \textit{In appendix}, we prove an integral expansion
	\[ \vortexoperator\vortex(x) = \int_\domain\greenlaplace(x,y)\vortex(y)\mumeasure(y) + \int_\domain F(x,y)\vortex(y)\mumeasure(y) ,\]
where $\greenlaplace$ is the Green's function for $\laplacian$ in $\domain$ with Dirichlet boundary conditions,
and $F:\domain\times\domain\to\reals$ is a bounded and continuous function.
Such Green's function expansion is not known for degenerate elliptic equations (see~\citelist{\cite{GFR4}\cite{GFR1}\cite{GFR2}} for existence of Green's function and
related estimates in the case where $\depth$ is non degenerated). We prove this expansion by regularity theory~\cite{GilbardTrudinger},
under suitable assumptions on both $\domain$ and $\depth$.
We cover the case of non regular domains $\domain$ with $\depth\in C(\closure{\domain})\cap W^{1,\infty}_{\text{loc}}(\domain)$
satisfying $\inf_\domain\depth>0$, or $\depth$ satisfying the ad hoc condition
	\[ \inf_{\substack{q\in[1,+\infty)\\\beta\in[0,2)}}\limsup_{\distance(x,\boundary\domain)}\bigg\{ \frac{1}{(\delta_q(x))^\beta} \ \frac{|\gradient\depth(x)|^2}{\depth(x)} \bigg\}
		< +\infty ;\]
or in other words: there exists $\beta\in [0,2)$, $q\geq 1$ and a neighborhood $\mathcal{U}$ of $\boundary\domain$ such that,
for all $x\in\mathcal{U}$:
	\[ \frac{1}{(\delta_q(x))^\beta} \ \frac{|\gradient\depth(x)|^2}{\depth(x)} \leq C \]
for some $C>0$. Here $\delta_q$ is the mean distance at the boundary of $\domain$ of order $q$, as defined in~\cite{Balinsky};
and it is well defined event for non regular domains.
In comparison with~\cite{Munteanu}, we do not rely on a Muckenhoupt condition.
\textit{In a second section}, we recall some preliminaries in rearrangements theory,
and we use the work of Burton~\citelist{\cite{BurtonGlobal}\cite{BurtonRearrangementOfFunctions}\cite{BurtonSteadyConfiguration}\cite{BurtonVariationalProblems}}
to construct maximizers of the energy that are also solutions of the lake equations.
\textit{In section~3}, we exploit our integral representation to study asymptotic properties of energy maximizers as the vortex profile vanishes.
The strategies of proof is to compare the maximal energy with the energy produces by very symmetric competitors, as one would do
for the Euler equations. However, since the relevant measure is $\mumeasure(x)=\depth(x)\dif x$, we have to face some
technical difficulties. We manage to prove a generalization of \cref{theoremSteadyIntroduction}.
The proof of \cref{theoremRotatingIntroduction} is done in \textit{section~4}.

\endgroup
\setcounter{theorem}{\thetmp}

\section{The stream function}

\subsection{Lakes}

By a \emph{lake} we mean a couple $(\domain,\depth)$ where $\domain\subseteq\plane$ is an bounded open set and $\depth\in C(\closure{\domain})$
is a non negative continuous function. We ask the following additional conditions:
\begin{enumerate}
	\item $\domain = \closure{\domain_0}\setminus\bigcup\limits_{i=0}^\nbislands\curve_i$ for a bounded connected open set $\domain_0\subseteq\plane$
	and disjoint compact connected subsets $\curve_i\subset\closure{\domain_0}$, with the convention that $\curve_0=\boundary\domain_0$;
	\item for all $i=0,\dots,\nbislands$, we have either $\capacity(\curve_i)>0$, or $\depth$ admits a Dini continuous extension by $0$ on $\domain\cup\curve_i$;
	\item for all compact subset $K\subset\domain$, we have $\inf_K\depth>0$.
\end{enumerate}
In the above definition, each $\boundary\curve_i$ represents a shore.
We allow $\curve_i$ to be the boundary of a closed connected compact set (in which case we represent an island),
$\curve_i$ a curve, but we also allow $\curve_i$ to be reduced to a point shore, provided the depth function $\depth$ is sufficiently regular in a neighborhood of $\curve_i$.

\subsection{Basic functional framework}

In this section we introduce a functional framework for the stream function.
We write $W^{1,2}(\domain)$ the standard Sobolev space of square integrable functions on $\domain$ whose distributional derivatives exist as square integrable functions
on $\domain$, endowed with its usual scalar product
	\[ \scalarproduct{\phi}{\psi}{W^{1,2}} = \int_\domain\phi\psi\lebesguemeasure + \int_\domain\scalarproduct{\gradient\phi}{\gradient\psi}{\plane}\lebesguemeasure .\]
Here $\lebesguemeasure$ denotes the Lebesgue measure on $\domain$.
We denote by $\largefunctionspace$ the set of those functions $\phi\in W^{1,2}(\domain)$ such that $\depth^{-1}|\gradient\phi|^2\in L^1(\domain,\lebesguemeasure)$,
endowed with the scalar product
	\[ \scalarproduct{\phi}{\psi}{\largefunctionspace} = \scalarproduct{\phi}{\psi}{W^{1,2}}
		+ \int_\Omega\scalarproduct{\gradient\phi}{\gradient\psi}{\plane}\ \frac{\lebesguemeasure}{\depth} .\]
Since $\depth$ is assumed to be positive on compact subsets of $\domain$, we have the set inclusion $\smoothfunctions(\domain)\subseteq\largefunctionspace$.
We denote by $\functionspace=\closure{\smoothfunctions(\domain)}$ the closure of $\smoothfunctions(\domain)$ in $\largefunctionspace$.
\begin{proposition}\label{equivalentscalarproductonfunctionspace}
The quadratic form
	\[ (\phi,\psi)\in\functionspace \mapsto \scalarproduct{\phi}{\psi}{\functionspace} = \int_\domain\scalarproduct{\gradient\phi}{\gradient\psi}{\plane}\ \frac{\lebesguemeasure}{\depth} \]
defines a scalar product whose induced norm is equivalent to the norm induced by $\largefunctionspace$.
\end{proposition}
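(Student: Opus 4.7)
The bilinearity and symmetry of $(\phi,\psi)\mapsto\scalarproduct{\phi}{\psi}{\functionspace}$ are immediate, as is the nonnegativity $\scalarproduct{\phi}{\phi}{\functionspace}\ge 0$. To check that it is a scalar product, the only thing to verify is definiteness: if $\scalarproduct{\phi}{\phi}{\functionspace}=\int_\domain\depth^{-1}|\gradient\phi|^2\lebesguemeasure=0$, then, since $\depth>0$ on every compact subset of $\domain$, the gradient $\gradient\phi$ vanishes $\lebesguemeasure$-a.e.\ on $\domain$; hence $\phi$ is constant on the connected open set $\domain$. But $\phi$ lies in $\functionspace$, which I will argue below is a subspace of $W^{1,2}_0(\domain)$, so this constant must be zero.

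The key preliminary observation for both the above point and the norm equivalence is that $\functionspace\subseteq W^{1,2}_0(\domain)$. Indeed, by the very definition of the $\largefunctionspace$ scalar product, we have $\norm{\phi}{W^{1,2}}\leq\norm{\phi}{\largefunctionspace}$ for every $\phi\in\largefunctionspace$, so any Cauchy sequence in $\smoothfunctions(\domain)$ for the $\largefunctionspace$-norm is also Cauchy in $W^{1,2}(\domain)$ and converges there to the same limit. Since the $W^{1,2}$-closure of $\smoothfunctions(\domain)$ is $W^{1,2}_0(\domain)$, this gives the inclusion $\functionspace\subseteq W^{1,2}_0(\domain)$.

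Next I turn to equivalence of norms. One direction is immediate from the definition, namely $\scalarproduct{\phi}{\phi}{\functionspace}\leq\scalarproduct{\phi}{\phi}{\largefunctionspace}$ for all $\phi\in\functionspace$. For the reverse inequality, I use that $\depth\in C(\closure{\domain})$ and $\domain$ is bounded, so $M:=\sup_{\closure{\domain}}\depth<+\infty$, which gives pointwise $|\gradient\phi|^2\leq M\,\depth^{-1}|\gradient\phi|^2$, hence
\[
\int_\domain|\gradient\phi|^2\lebesguemeasure\ \leq\ M\int_\domain\depth^{-1}|\gradient\phi|^2\lebesguemeasure\ =\ M\,\scalarproduct{\phi}{\phi}{\functionspace}.
\]
Combining this with the Poincar\'e inequality on the bounded set $\domain$ applied to $\phi\in W^{1,2}_0(\domain)$, which yields $\int_\domain\phi^2\lebesguemeasure\leq C_P\int_\domain|\gradient\phi|^2\lebesguemeasure$, I obtain a bound of the form $\scalarproduct{\phi}{\phi}{\largefunctionspace}\leq C\,\scalarproduct{\phi}{\phi}{\functionspace}$ with $C=(1+C_P)M+1$, closing the equivalence.

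There is no serious obstacle: the only subtle step is justifying $\functionspace\subseteq W^{1,2}_0(\domain)$ (so that Poincar\'e is available and constants are forced to vanish), which follows at once from the fact that $\norm{\cdot}{\largefunctionspace}$ dominates $\norm{\cdot}{W^{1,2}}$. Everything else amounts to using the continuity of $\depth$ on $\closure{\domain}$ to turn the degenerate weight $\depth^{-1}$ into a bound in the opposite direction against the unweighted Dirichlet norm.
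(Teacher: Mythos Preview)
Your proof is correct and follows essentially the same approach as the paper: bound the unweighted Dirichlet integral by the weighted one using $\sup_{\closure{\domain}}\depth<\infty$, then invoke Poincar\'e's inequality to control the full $W^{1,2}$-norm. You are more explicit than the paper in justifying the inclusion $\functionspace\subseteq W^{1,2}_0(\domain)$ needed for Poincar\'e, and in checking definiteness, but the underlying argument is the same.
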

\begin{proof}
The fact that it defines a scalar product is straightforward.
For the equivalence between the norms, one first observe that
	\[ \norm{\phi}{\functionspace}^2 \leq \norm{\phi}{\largefunctionspace}^2 \]
for all $\phi\in\functionspace$. For a converse inequality, one may write
	\[ \norm{\phi}{\functionspace}^2 \geq \frac{1}{2}\norm{\phi}{\functionspace}^2 + \frac{1}{2\big(\sup_\domain\depth\big)}\int_\domain|\gradient\phi|^2\lebesguemeasure .\]
Since $\domain$ is a bounded set, we may apply the standard Poincaré's inequality to obtain some constant $\ncste>0$ such that
	\[ \cste \int_\domain|\gradient\phi|^2\lebesguemeasure \geq \norm{\phi}{W^{1,2}}^2 ,\]
and thus we have
	\[ \norm{\phi}{\functionspace}^2 \geq \ncste\big( \norm{\phi}{\functionspace}^2 + \norm{\phi}{W^{1,2}}^2 \big) = \cste\norm{\phi}{\largefunctionspace}^2 . \qedhere\]
\end{proof}

\subsection{Circulations}

In this section we show how we are going to solve a problem of the form
	\begin{equation}\tag{C}\label{circulationproblem}
		\begin{cases}
			-\divergence\Big( \depth^{-1}\gradient\phi \Big) = 0 ,\\
			\displaystyle \oint_{\boundary\curve_i}\scalarproduct{\depth^{-1}\flipgradient\phi}{\tau_i}{\plane} = \circulation_i .
		\end{cases} 
	\end{equation}
Here above, the real numbers $\circulation_0,\dots,\circulation_\nbislands\in\reals$ are fixed, and $\tau_i$ denotes the tangent vector field
associated with $\curve_i$ with clockwise orientation for $i=1,\dots,\nbislands$, counterclockwise orientation for $\curve_0$. Each quantity
	\[ \oint_{\boundary\curve_i}\scalarproduct{\depth^{-1}\flipgradient\phi}{\tau_i}{\plane} \]
represents the \emph{circulation of the velocity field} $\depth^{-1}\flipgradient\phi$ along $\boundary\curve_i$.
The first equation
	\[ \curl\Big( \depth^{-1}\flipgradient\phi \Big) = -\divergence\Big( \depth^{-1}\gradient\phi \Big) = 0 \]
means that the curl of the vector field $\depth^{-1}\flipgradient\phi$ is null, that is, no vortex is produced by this velocity field.
Observe that from the circulation conditions we obtain
	\begin{align*}
	\sum^\nbislands_{i=0}\oint_{\boundary\curve_i}\scalarproduct{\depth^{-1}\flipgradient\phi}{\tau_i}{\plane}
		&= \sum^{\nbislands}_{i=0}\int_{\boundary\curve_i}\scalarproduct{\depth^{-1}\gradient\phi}{-\eta_i}{\plane}
		\\&= -\int_{\boundary\domain}\scalarproduct{\depth^{-1}\gradient\phi}{\eta}{\plane}
		= \int_\domain -\divergence\Big( \depth^{-1}\gradient\phi \Big) ,
	\end{align*}
where $\eta$ is the unit normal outward vector on $\domain$, and $\eta_i$ its restriction on $\boundary\curve_i$.
For problem~\eqref{circulationproblem}, this leads to the consistency condition
	\[ \sum^\nbislands_{i=0}\circulation_i = \int_\domain\curl\Big( \depth^{-1}\flipgradient\phi \Big)\lebesguemeasure = 0 .\]
Since we work in an a priori rough setting (the normal vector $\tau_i$ may not be defined),
we understand problem~\eqref{circulationproblem} in the weak sense.
Let us assume that, for all $i=0,\dots,\nbislands$, we have constructed a function $\staticflow_i\in\largefunctionspace$
that solves
	\begin{equation}\tag{T}\label{tambourproblem}
		\begin{cases}
			-\divergence\Big( \depth^{-1}\gradient\staticflow_i \Big) = 0 ,\\
			\staticflow_i = \delta_{ij}\quad \text{on}\ \boundary\curve_j,\forall j\in\{0,\dots,\nbislands\} .
		\end{cases}
	\end{equation}
Here $\delta_{ij}$ denotes the Kronecher's symbol. Then we would have
	\[ \circulation_j = \int_{\boundary\curve_j}\scalarproduct{\depth^{-1}\gradient\phi}{-\eta_j}{\plane}
		= -\int_{\boundary\domain}\scalarproduct{\staticflow_j\depth^{-1}\gradient\phi}{\eta}{\plane}
		= -\int_\domain\divergence\Big( \staticflow_j\depth^{-1}\gradient\phi \Big) ,\]
and therefore, if $\phi\in\largefunctionspace$ is a solution of problem~\eqref{circulationproblem}, we have
	\[ \circulation_j = -\int_\domain\scalarproduct{\gradient\staticflow_j}{\gradient\phi}{\plane}\ \frac{\lebesguemeasure}{\depth} .\]
It is therefore natural to look for a solution $\phi\in\largefunctionspace$ of problem~\eqref{circulationproblem}
as a linear combination
	\[ \phi = \sum^\nbislands_{i=0}\alpha_i\staticflow_i ,\]
where the coefficients $\alpha_0,\dots,\alpha_\nbislands\in\reals$ should be chosen in such a way that
	\[ -\circulation_j = \sum^\nbislands_{i=0}\alpha_i\int_\domain\scalarproduct{\gradient\staticflow_j}{\gradient\staticflow_i}{\plane}\ \frac{\lebesguemeasure}{\depth} ,
		\quad\text{for all}\ j\in\{0,\dots,\nbislands\} . \]
Such a family $\staticflow_1,\dots,\staticflow_\nbislands$ would thus be useful to construct weak solutions of the circulation problem.
However, one should take into account that $\domain$ may be irregular, so that the notion of trace needed to define $\staticflow_1,\dots,\staticflow_\nbislands$,
may be hard to define.
In this section, we begin by proving that problem~\eqref{tambourproblem} admits a unique solution in a certain weak sense,
and this solution is bounded. Once we have proved the existence of the functions $\staticflow_1,\dots,\staticflow_\nbislands$,
we focus on the existence of a solution for our weak formulation of problem~\eqref{circulationproblem}.
\begin{proposition}\label{existencetambourproblem}
For all $i\in\{0,\dots,\nbislands\}$, there exists a unique function $\staticflow_i\in\largefunctionspace$ that satisfies
	\[ \int_\domain\scalarproduct{\gradient\staticflow_i}{\gradient\phi}{\plane}\ \frac{\lebesguemeasure}{\depth} = 0 ,\quad\text{for all}\ \phi\in\functionspace ,\]
and such that there exists a sequence $(\varphi_n)_{n\in\integers}$ of functions from $C^1\big(\closure{\domain}\big)$ converging in $\largefunctionspace$
to $\staticflow_i$; with $\varphi_n=\delta_{ij}$ on a neighborhood of $\boundary\curve_j$, for all $j\in\{0,\dots,\nbislands\}$.
Moreover, the solution $\staticflow_i$ satisfies $0\leq \staticflow_i\leq 1$ on $\domain$.
\end{proposition}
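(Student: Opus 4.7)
I will recast problem~\eqref{tambourproblem} as an orthogonal projection in the Hilbert space $(\functionspace,\scalarproduct{\cdot}{\cdot}{\functionspace})$ supplied by Proposition~\ref{equivalentscalarproductonfunctionspace}. To this end, set
\[ V_i = \bigl\{ \phi\in\largefunctionspace : \exists\, (\varphi_n)\subseteq C^1(\closure{\domain}),\ \varphi_n\to\phi\ \text{in}\ \largefunctionspace,\ \varphi_n=\delta_{ij}\ \text{near}\ \boundary\curve_j\ \text{for all}\ j \bigr\}, \]
so that $\staticflow_i\in V_i$ encodes precisely the weak boundary condition in the statement. The argument then has four steps.

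\emph{Step 1: one element of $V_i$.} Using a partition of unity around the shores, I glue local cutoffs. On a shore $\curve_j$ of positive capacity a smooth cutoff of the Euclidean distance to $\curve_j$ is enough. On a shore of zero capacity (e.g.\ a point) I take a radial cutoff around $\curve_j$ whose gradient is supported in a fixed annulus lying in a compact subset of $\domain$; condition~(3) of the lake definition gives $\depth \geq c > 0$ there, so $\depth^{-1}|\gradient\varphi|^2 \in L^1$. This produces some $\hat\staticflow_i\in V_i$.

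\emph{Step 2: affine structure.} If $\phi_1,\phi_2\in V_i$ with approximants $\varphi_n^1,\varphi_n^2$, then $\varphi_n^1-\varphi_n^2\in C^1(\closure{\domain})$ vanishes in a neighborhood of $\boundary\domain$, hence lies in $\smoothfunctions(\domain)$, and passing to the limit places $\phi_1-\phi_2\in\functionspace$. Conversely, adding a compactly supported $C^1$ perturbation to an approximant preserves the boundary behavior in some (possibly smaller) neighborhood of each $\boundary\curve_j$. Therefore $V_i=\hat\staticflow_i+\functionspace$ is a closed affine subspace of $\largefunctionspace$.

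\emph{Step 3: existence and uniqueness.} Writing $\staticflow_i=\hat\staticflow_i+u$ with $u\in\functionspace$, the sought variational equation reads
\[ \scalarproduct{u}{\phi}{\functionspace} \,=\, -\scalarproduct{\hat\staticflow_i}{\phi}{\functionspace}, \qquad \forall\,\phi\in\functionspace. \]
The right-hand side is continuous in $\phi$, and $(\functionspace,\scalarproduct{\cdot}{\cdot}{\functionspace})$ is a Hilbert space, so Riesz representation supplies a unique $u$. Equivalently, $\staticflow_i$ is the element of minimal $\functionspace$-norm in $V_i$.

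\emph{Step 4: pointwise bounds.} Set $T(s)=\max(0,\min(s,1))$. Applied (after a standard smoothing) to an approximating sequence of $\staticflow_i$, the truncation $T$ produces an approximating sequence of $T(\staticflow_i)$ of the same type, since $T(0)=0$ and $T(1)=1$ preserve the boundary values $\delta_{ij}$. Hence $T(\staticflow_i)\in V_i$ with $\norm{T(\staticflow_i)}{\functionspace}\leq\norm{\staticflow_i}{\functionspace}$, the inequality being strict unless $\staticflow_i=T(\staticflow_i)$ almost everywhere; the minimality property from Step~3 therefore forces $0\leq\staticflow_i\leq 1$.

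\textbf{Main obstacle.} The only step requiring genuine care is Step~1, producing an element of $V_i$ at a zero-capacity shore: one must keep $|\gradient\varphi|$ supported in an annulus where $\depth$ is bounded below, so that the weighted Dirichlet energy is finite. Beyond this point, the argument is standard Hilbert space theory combined with the familiar truncation chain rule in weighted Sobolev spaces.
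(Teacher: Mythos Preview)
Your proof is correct and follows essentially the same route as the paper: both construct $\staticflow_i$ as the element of minimal weighted Dirichlet energy in the affine set $V_i=\closure{\Gamma_i}$, and both deduce $0\le\staticflow_i\le1$ from a truncation that stays in the admissible class. The only cosmetic differences are that the paper phrases Step~3 as convex projection of $[0]$ in the quotient $\largefunctionspace/\{\gradient=0\}$ rather than Riesz representation on $\functionspace$, and obtains the bounds by testing the Euler--Lagrange equation with $\positivepart{\staticflow_i-1}\in\functionspace$ (weak maximum principle) instead of invoking uniqueness of the minimizer; your explicit Step~1 verifying $V_i\neq\emptyset$ is a point the paper leaves implicit.
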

This kind of existence result is standard~\cite{Titi}*{Lemma~4} and it is usually proved by using Schrauder's theory. Here we propose an original proof based on
purely functional analysis methods, that may be applied for non regular domains.
\begin{proof}
Let us consider the set
	\[ \Gamma_i = \big\{ \varphi\in C^1\big(\closure{\domain}\big) : \varphi=\delta_{ij}\ \text{on a neighborhood of}\ \boundary\curve_j,
		\ \text{for all}\ j\in\{0,\dots,\nbislands\} \big\} .\]
We are going to work in the quotient space $H=\largefunctionspace/\sim$, where $\sim$ is the equivalence relation
	\[ \phi_1\sim\phi_2 \iff \int_\domain |\gradient(\phi_1-\phi_2)|^2\ \frac{\lebesguemeasure}{\depth} = 0 .\]
The equivalence class of a function $\phi\in\largefunctionspace$ is simply denoted by $[\phi]$.
The quotient $H$ is then endowed with the well-defined scalar product
	\[ \scalarproduct{[\phi_1]}{[\phi_2]}{H} := \int_\domain \scalarproduct{\gradient\phi_1}{\gradient\phi_2}{\plane}\ \frac{\lebesguemeasure}{\depth} .\]
We claim that the set
	\[ \Pi_i = \big\{ [\phi] : \phi\in\closure{\Gamma_i} \big\} \]
is a complete and convex subset of $H$. The set $\Gamma_i$ itself is convex, and so is its closure $\closure{\Gamma_i}$ in $\largefunctionspace$.
Thus $\Pi_i$ is convex as well. For the completeness, the key point is to observe that the equivalence class $[\phi]$ of some $\phi\in\functionspace$
only contains $\phi$. Hence the equivalence class of some $[\phi]$ with $\phi\in\closure{\Gamma_i}$
only contains $\phi$. Hence completeness of $\Pi_i$ is consequence of completeness of $\functionspace$.
According to the convex projection theorem~\cite{Brezis}*{Theorem~5.2}, there exists a unique $\staticflow_i\in\closure{\Gamma_i}$
such that $[\staticflow_i]\in\Pi_i$ is the projection of $[0]\in H$ onto $\Pi_i$. For all $t\in\reals$ and for all $\varphi\in\functionspace$, we have
$\staticflow_i+t\varphi\in\closure{\Gamma_i}$ and in particular, we must have
	\[ \norm{[\staticflow_i+t\varphi]}{H}^2 \geq \norm{[\staticflow_i]}{H}^2 ,\]
which also reads as
	\[ |t|\int_\domain|\gradient\varphi|^2\ \frac{\lebesguemeasure}{\depth}
		+ 2\frac{t}{|t|}\int_\domain\scalarproduct{\gradient\varphi}{\gradient\staticflow_i}{\plane}\ \frac{\lebesguemeasure}{\depth} \geq 0 .\]
Letting $t\to 0$ yields the identity
	\[ \int_\domain\scalarproduct{\gradient\varphi}{\gradient\staticflow_i}{\plane}\ \frac{\lebesguemeasure}{\depth} = 0 ,\]
for all $\varphi\in\functionspace$.

Finally, the inequalities $0\leq \staticflow_i\leq 1$ are proved by an adaptation of
weak maximum principle techniques~\cite{GilbardTrudinger}*{Theorem~8.1}.
Let us illustrate this by proving $\staticflow_i\leq 1$.
The lower bound $\staticflow_i\geq 0$ will be proved using similar arguments.
By construction of $\staticflow_i$, the
positive part $\positivepart{\staticflow_i-1}$ belongs to $\functionspace$.
In particular, we have
	\[ 0 = \int_\domain\scalarproduct{\gradient\staticflow_i}{\gradient\positivepart{\staticflow_i-1}}{\plane}\ \frac{\lebesguemeasure}{\depth}
		= \int_\domain |\gradient\positivepart{\staticflow_i-1}|^2\ \frac{\lebesguemeasure}{\depth} .\]
Thus $\gradient\positivepart{\staticflow_i-1}=0$ on $\domain$. This is possible only if $\positivepart{\staticflow_i-1}=0$ on $\domain$, and thus $\staticflow_i\leq 1$.
\end{proof}
Now we turn to the existence of solution of problem~\eqref{circulationproblem}.
We recall that by a weak solution of problem~\eqref{circulationproblem}, we mean a linear combination
	\[ \phi = \sum^\nbislands_{i=0}\alpha_i\staticflow_i ,\]
with $\alpha_0,\dots,\alpha_\nbislands\in\reals$ such that
	\[ -\circulation_j = \sum^\nbislands_{i=0}\alpha_i\int_\domain\scalarproduct{\gradient\staticflow_j}{\gradient\staticflow_i}{\plane}\ \frac{\lebesguemeasure}{\depth} .\]
We require a first lemma:
\begin{lemma}\label{lineartambourcombinationisconstant}
A linear combination
	\[ \phi = \sum^\nbislands_{i=0}\alpha_i\staticflow_i \]
with $\alpha_0,\dots,\alpha_\nbislands\in\reals$ belongs to $\functionspace$ if, and only if, we have $\alpha_i=0$ for all $i=0,\dots,\nbislands$.
In particular,
	\[ \sum^\nbislands_{i=0}\staticflow_i = 1 .\]
\end{lemma}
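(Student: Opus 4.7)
My plan is to prove the partition-of-unity identity $\sum_i\staticflow_i = 1$ in tandem with the linear independence statement, both resting on a single fact---itself a consequence of assumption~(2) in the definition of lake---that the only constant function lying in $\functionspace$ is zero.

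First I will show that $\sum_i\staticflow_i - 1 \in \functionspace$. For each $i$, let $\varphi_n^{(i)}\in\Gamma_i$ approximate $\staticflow_i$ in $\largefunctionspace$ as provided by \cref{existencetambourproblem}. Since $\varphi_n^{(i)} = \delta_{ij}$ on a neighborhood of $\boundary\curve_j$, the sum $\sum_i\varphi_n^{(i)}$ equals $\sum_i\delta_{ij}=1$ on a neighborhood of each $\boundary\curve_j$, hence on a neighborhood of $\boundary\domain$. Thus $\sum_i\varphi_n^{(i)} - 1$ has compact support in $\domain$ and belongs to $\smoothfunctions(\domain)\subseteq\functionspace$; by closedness of $\functionspace$, its $\largefunctionspace$-limit $\sum_i\staticflow_i - 1$ does too. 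Next, summing the defining equation $\int\gradient\staticflow_i\cdot\gradient\varphi\,\frac{\lebesguemeasure}{\depth}=0$ with test function $\varphi = \sum_j\staticflow_j - 1 \in \functionspace$ yields $\int_\domain|\gradient\sum_i\staticflow_i|^2\,\frac{\lebesguemeasure}{\depth} = 0$, so $\sum_i\staticflow_i$ is a constant $K$ on the connected lake, and $K-1\in\functionspace$. I then invoke that the only constant in $\functionspace$ is zero: when $\inf_\domain\depth>0$, $\functionspace\subseteq W^{1,2}_0(\domain)$ and the positive capacity of at least one $\curve_i$ excludes $1$ from $W^{1,2}_0(\domain)$; in the degenerate case, the Dini-continuous vanishing of $\depth$ on some $\curve_i$ forces $\int|\gradient\psi|^2\,\frac{\lebesguemeasure}{\depth}$ to diverge for any $\smoothfunctions(\domain)$ cut-off of a nonzero constant. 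Either way $K = 1$, which proves the ``in particular'' part.

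The iff follows by the same template. The $(\Leftarrow)$ direction is trivial. For $(\Rightarrow)$, assume $f = \sum_i\alpha_i\staticflow_i \in \functionspace$. Testing the defining equation of each $\staticflow_j$ against $f$ and summing with weights $\alpha_j$ yields $\int|\gradient f|^2\,\frac{\lebesguemeasure}{\depth} = 0$, so $f$ is a constant $c$, and $c\in\functionspace$ forces $c = 0$. Combining $\sum_i\alpha_i\staticflow_i = 0$ with the partition identity just proved, I rewrite the vanishing combination as $\sum_{i\geq 1}(\alpha_i - \alpha_0)\staticflow_i = -\alpha_0$, itself a constant linear combination of $\staticflow_1,\dots,\staticflow_\nbislands$; iterating the previous step (or, when some $\boundary\curve_j$ has positive capacity, reading a boundary trace on $\boundary\curve_j$) forces all the $\alpha_i$ to coincide and then to vanish.

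The principal obstacle is the exclusion of nonzero constants from $\functionspace$: this is the single genuinely nontrivial ingredient, what truly distinguishes the lake setting from a plain $W^{1,2}$ framework, and it has to be extracted from the lake hypothesis on each boundary component $\curve_i$. Once it is in hand, the remainder of the lemma is a formal consequence of the orthogonality satisfied by the $\staticflow_i$ together with the approximation by the classes $\Gamma_i$.
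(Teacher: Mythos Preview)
Your overall strategy is close to the paper's, but there is a genuine gap in the last step, and the place where the lake hypothesis~(2) is actually needed has been misidentified.

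The claim that $\functionspace$ contains no nonzero constant is simpler than you suggest: since the $\largefunctionspace$-norm dominates the $W^{1,2}$-norm, one always has $\functionspace\subseteq W^{1,2}_0(\domain)$, and on a bounded domain no nonzero constant lies in $W^{1,2}_0(\domain)$. No case distinction on $\inf_\domain\depth$ is required, and the Dini hypothesis plays no role here. (Your ``degenerate'' argument is in fact backwards: Dini-continuity of $\depth$ at a point shore makes the weighted integral $\int|\gradient\varphi|^2/\depth$ \emph{more} likely to be finite, not less.)

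The real gap is in your final paragraph. Once you know $\sum_i\alpha_i\staticflow_i=0$ pointwise, rewriting as $\sum_{i\ge 1}(\alpha_i-\alpha_0)\staticflow_i=-\alpha_0$ and ``iterating the previous step'' is circular: the previous step required the combination to lie in $\functionspace$, which for the constant $-\alpha_0$ is equivalent to $\alpha_0=0$, exactly what you are trying to prove. Your alternative (reading a boundary trace on $\boundary\curve_j$ when $\capacity(\curve_j)>0$) does handle those components, but says nothing about components with $\capacity(\curve_i)=0$. This is precisely where hypothesis~(2) in the definition of a lake enters, and the paper uses it explicitly: a connected zero-capacity $\curve_i$ is a single point $\{p_i\}$, and Dini-continuity of $\depth$ (extended by $0$) at $p_i$ gives $\int_\domain\depth(x)\,|p_i-x|^{-2}\,\lebesguemeasure(x)<\infty$; combined with a radial fundamental-theorem-of-calculus identity and Cauchy--Schwarz, this yields a pointwise bound
\[
|\varphi(p_i)|\;\le\; C\Bigl(\int_\domain|\gradient\varphi|^2\,\tfrac{\lebesguemeasure}{\depth}\Bigr)^{1/2}
\]
for smooth $\varphi$ constant near $p_i$. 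Applying this to the approximating sequence (which equals $\alpha_i$ near $p_i$ and tends to $0$ in $\largefunctionspace$) forces $\alpha_i=0$. Without this estimate, linear independence at point shores is simply not established in your argument.
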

This lemma strongly uses the regularity assumptions on $b$ near a point shore.
\begin{proof}
If we have $\alpha_i=0$ for all $i\in\{0,\dots,\nbislands\}$, then the linear combination is null and belongs to $\functionspace$.
From this it is easy to see that $1-\sum^\nbislands_{i=0}\staticflow$ belongs to $\functionspace$ by construction, and
therefore $\sum^\nbislands_{i=0}\staticflow_i=1$.
For the converse, let us assume that $\sum^\nbislands_{i=0}\alpha_i\staticflow_i$ belongs to $\functionspace$.
By construction of each $\staticflow_i$ and by definition of $\functionspace$
there exists a sequence $(\varphi_n)_{n\in\integers}$ of smooth functions $\varphi_n\in C^1\big(\closure{\domain}\big)$
and $\varphi_n=\alpha_i$ on a neighborhood of $\curve_i$; and the sequence $(\varphi_n)_{n\in\integers}$ converges in $\largefunctionspace$
to $0$. Up to a substraction by $\alpha_0$, we have found a sequence $(\varphi_n)_{n\in\integers}$
of $\smoothfunctions(\domain)$ converging in $\largefunctionspace$ to $-\alpha_0$. By completeness of $W^{1,2}_0(\plane)$
we have $\alpha_0=0$ and the convergence also occurs in $W^{1,2}_0(\plane)$.
Up to passing to a subsequence, we may assume that the convergence occurs
quasi-everywhere on $\plane$~\cite{Willem}*{Section~7.2}. Let $i\in\{0,\dots,\nbislands\}$ be such that $\capacity(\curve_i)>0$.
Then we must have $\alpha_i=-\alpha_0$.
Assume that there exists $i\in\{0,\dots,\nbislands\}$ such that $\capacity(\curve_i)=0$. Since $\curve_i$ is connected and has null capacity,
Pólya-Szeg\H{o}'s inequality~\cite{Willem}*{Theorem~8.3.14} implies that $\curve_i=\{p_i\}$.
By assumption, the depth function $\depth$ may be extended by $0$ on $\domain\cup\{p_i\}$, and the extension (still denoted $\depth$)
is Dini continuous at $p_i$. In particular, we have for all $\varphi\in C^1(\plane)$ supported in $\domain$:
	\[ |\varphi(p_i)| = \bigg| \int_\domain \scalarproduct{\gradient\varphi(x)}{\frac{p_i-x}{|p_i-x|^2}}{\plane}\ \frac{\lebesguemeasure(x)}{\depth(x)}\bigg|
		\leq \bigg(\int_\domain\frac{\depth(x)}{|p_i-x|^2}\lebesguemeasure(x)\bigg)^{\frac{1}{2}}\,\bigg(\int_\domain|\gradient\varphi|^2\ \frac{\lebesguemeasure}{\depth}\bigg)^{\frac{1}{2}} . \]
There exists a constant $\ncste(\depth,p_i)>0$ such that for all $\varphi\in C^1(\plane)$ supported in $\domain$, we have
	\[ |\varphi(p_i)|\leq \cste \bigg(\int_\domain|\gradient\varphi|^2\ \frac{\lebesguemeasure}{\depth}\bigg)^{\frac{1}{2}}\leq \cste\norm{\varphi}{\largefunctionspace}  .\]
Applying this inequality on the sequence $(\varphi_n)_{n\in\integers}$ yields $|\alpha_i|=\lim\limits_{n\to+\infty}|\varphi_n(p_i)|=-\alpha_0=0$.
\end{proof}
\begin{theorem}\label{existencecirculationproblem}
The symmetric matrix $\mathcal{A}\in\reals^{(\nbislands+1)\times(\nbislands+1)}$ defined by
	\[ \mathcal{A}_{ij} = \int_\domain\scalarproduct{\gradient\staticflow_i}{\gradient\staticflow_j}{\plane}\ \frac{\lebesguemeasure}{\depth} \]
has kernel spanned by $(1,\dots,1)\in\reals^{\nbislands+1}$ and non negative eigenvalues. In particular, for any choice $\circulation_0,\dots,\circulation_\nbislands\in\reals$
such that
	\[ \sum^\nbislands_{i=0}\circulation_i = 0 ,\]
there exists a unique weak solution $\phi\in\largefunctionspace$ for problem~\eqref{circulationproblem} of the form $\phi = \sum^\nbislands_{i=1}\alpha_i\staticflow_i$.
\end{theorem}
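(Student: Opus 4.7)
The plan is to recognize the matrix $\mathcal{A}$ as the Gram matrix of the family $\gradient\staticflow_0,\ldots,\gradient\staticflow_\nbislands$ with respect to the inner product $(u,v)\mapsto\int_\domain\scalarproduct{u}{v}{\plane}\frac{\lebesguemeasure}{\depth}$ on vector fields. Gram matrices are symmetric positive semi-definite, so the spectral statement is immediate via the polarization identity
\[ \alpha^\top\mathcal{A}\,\alpha = \int_\domain\Big|\gradient\sum^\nbislands_{i=0}\alpha_i\staticflow_i\Big|^2\ \frac{\lebesguemeasure}{\depth} \geq 0, \]
and the symmetry of $\mathcal{A}$ gives an orthogonal decomposition $\reals^{\nbislands+1} = \ker\mathcal{A}\oplus\image\mathcal{A}$.

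Next I would identify $\ker\mathcal{A}$. If $\mathcal{A}\alpha = 0$, the displayed identity forces $\gradient\phi = 0$ almost everywhere on $\domain$, where $\phi = \sum_i\alpha_i\staticflow_i$. By connectedness of $\domain$ there exists $c\in\reals$ with $\phi\equiv c$. Using the identity $\sum_i\staticflow_i = 1$ from \cref{lineartambourcombinationisconstant}, I can rewrite
\[ 0 = \phi - c\cdot 1 = \sum^\nbislands_{i=0}(\alpha_i - c)\staticflow_i, \]
and since the zero function trivially lies in $\functionspace$, the equivalence in \cref{lineartambourcombinationisconstant} forces $\alpha_i = c$ for every $i$. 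Conversely $(1,\ldots,1)$ is clearly in $\ker\mathcal{A}$ by the same identity. Hence $\ker\mathcal{A} = \textrm{span}\{(1,\ldots,1)\}$.

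For the existence statement, the symmetry of $\mathcal{A}$ yields $\image\mathcal{A} = (\ker\mathcal{A})^\perp = \{x\in\reals^{\nbislands+1}:\sum_i x_i = 0\}$, so the compatibility $\sum_i\circulation_i = 0$ is exactly what is needed to make the linear system $-\circulation_j = \sum_i\alpha_i\mathcal{A}_{ij}$ solvable. To pass from a generic solution $\alpha\in\reals^{\nbislands+1}$ to the prescribed form $\phi = \sum_{i=1}^\nbislands\alpha_i\staticflow_i$ and obtain uniqueness, I use $\sum_i\staticflow_i = 1$ once more: adding a multiple of $(1,\ldots,1)$ to $\alpha$ amounts to adding a constant to $\phi$, so I fix the gauge $\alpha_0 = 0$; the restriction of $\mathcal{A}$ to the orthogonal complement $\{x:x_0 = 0\}$ of a complement of $\ker\mathcal{A}$ is strictly positive definite, hence the remaining coefficients $\alpha_1,\ldots,\alpha_\nbislands$ are uniquely determined.

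The main obstacle is the kernel computation, and more precisely the passage from ``$\gradient\phi$ vanishes'' to ``$\alpha$ lies on the diagonal line''. The delicate point is that $\phi$ collapses only to an \emph{unknown} constant $c$, not to zero, and that constants do not in general belong to $\functionspace$; it is the combination of the partition of unity $\sum\staticflow_i = 1$ with the rigidity statement of \cref{lineartambourcombinationisconstant} that lets me re-absorb the constant inside the linear combination and conclude. Everything else is linear algebra on a symmetric semidefinite matrix.
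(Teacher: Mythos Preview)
Your argument is correct and follows essentially the same route as the paper: identify $\alpha^\top\mathcal{A}\alpha$ with $\int_\domain|\gradient\sum_i\alpha_i\staticflow_i|^2\,\frac{\lebesguemeasure}{\depth}$ to get positive semidefiniteness, use \cref{lineartambourcombinationisconstant} to pin down the kernel, and finish with $\image\mathcal{A}=(\ker\mathcal{A})^\perp$ for existence plus the gauge $\alpha_0=0$ for uniqueness. Your explicit rewriting $\phi-c=\sum_i(\alpha_i-c)\staticflow_i$ via $\sum_i\staticflow_i=1$ is exactly the step the paper's proof leaves implicit when it jumps from ``$\phi$ is constant'' to ``$\alpha_0=\alpha_i$ for all $i$''.
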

\begin{proof}
According to \cref{lineartambourcombinationisconstant}, we already know that $(1,\dots,1)$ belongs to the kernel $\kernel(\mathcal{A})$ of $\mathcal{A}$.
Let $(\alpha_0,\dots,\alpha_{\nbislands})$ be another element of $\kernel(\mathcal{A})$.
We compute
	\[ \int_\domain \Big| \gradient \sum^\nbislands_{i=0}\alpha_i\staticflow_i \Big|^2\ \frac{\lebesguemeasure}{\depth}
		= (\alpha_0,\dots,\alpha_\nbislands)\mathcal{A} (\alpha_0,\dots,\alpha_\nbislands)^\dagger = 0 ,\]
where the $M^\dagger$ denotes the transpose of $M$. In particular, the linear combination $\sum^\nbislands_{i=0}\alpha_i\staticflow_i$ is a constant function.
According to \cref{lineartambourcombinationisconstant}, we have $\alpha_0=\alpha_i$ for all $i\in\{0,\dots,\nbislands\}$, and thus $\kernel(\mathcal{A})$
is spanned by $(1,\dots,1)$. Now if $\lambda\in\reals$ is an eigenvalue of $\mathcal{A}$, with eigenvector $\alpha=(\alpha_0,\dots,\alpha_\nbislands)$,
the same computation shows that
	\[ 0\leq \int_\domain \Big| \gradient \sum^\nbislands_{i=0}\alpha_i\staticflow_i \Big|^2\ \frac{\lebesguemeasure}{\depth} = \lambda |\alpha|^2 ,\]
and thus $\lambda\geq 0$. Finally, since $\mathcal{A}$ is symmetric, its range is the orthogonal space of its kernel in $\reals^{\nbislands+1}$,
and the restriction of $\mathcal{A}$ to its range defines a linear isomorphism. The conclusion follows.
\end{proof}

\subsection{Construction of the stream function}

In this section, we show how to construct a stream function $\psi\in\largefunctionspace$ that solves
	\[
		\begin{cases}
			-\divergence\Big( \depth^{-1}\gradient\psi \Big) = \omega ,\\
			\displaystyle \oint_{\boundary\curve_i}\scalarproduct{\depth^{-1}\flipgradient\psi}{\tau_i}{\plane} = \circulation_i ,
		\end{cases}
	\]
for given circulations $\circulation_0,\dots,\circulation_\nbislands$ that satisfy
	\[ \sum^\nbislands_{i=0}\circulation_i = \int_\domain\omega\lebesguemeasure .\]
The function $\omega$ is the physical vortex associated to the velocity field $\depth^{-1}\flipgradient\psi$.
As mentioned in the introduction, it will be suitable to work with the potential vortex $\vortex=\depth^{-1}\omega$ instead,
so we change our notations and we solve in $\largefunctionspace$:
	\begin{equation}\tag{P}\label{completeproblem}
		\begin{cases}
			-\divergence\Big( \depth^{-1}\gradient\psi \Big) = \depth\vortex ,\\
			\displaystyle \oint_{\boundary\curve_i}\scalarproduct{\depth^{-1}\flipgradient\psi}{\tau_i}{\plane} = \circulation_i .
		\end{cases}
	\end{equation}
The functional space of interest for the vortex data $\vortex$ is the Lebesgue space $L^p(\domain,\mumeasure)$, where $p>1$
and the measure $\mumeasure(x)=\depth(x)\lebesguemeasure(x)$ is the invariant measure associated to the lake $(\domain,\depth)$.
Observe that since $\depth$ is a bounded function, we automatically have $(\depth\vortex)\in L^p(\domain,\lebesguemeasure)$.

Let us explain in what sense we understand the weak form of the problem~\eqref{completeproblem}.
Assume that $\psi\in\largefunctionspace$ solves the above problem,
and let $\phi\in\smoothfunctions(\domain)$ be a smooth test function.
By the divergence theorem we have formally, if $\psi$ is a solution of problem~\eqref{completeproblem}:
	\[ \int_\domain\phi\vortex\mumeasure = -\int_\domain\phi\divergence\Big(\depth^{-1}\gradient\psi\Big)\lebesguemeasure
		= \int_\domain \scalarproduct{\gradient\psi}{\gradient\phi}{\plane}\ \frac{\lebesguemeasure}{\depth} .\]
Since we have continuous embeddings
	\[ (\largefunctionspace,\norm{\cdot}{\largefunctionspace}) \includesin (W^{1,2}(\domain),\norm{\cdot}{W^{1,2}})
		\includesin (L^{p'}(\domain,\lebesguemeasure),\norm{\cdot}{L^{p'}_{\lebesguemeasure}})
		\includesin (L^{p'}(\domain,\mumeasure),\norm{\cdot}{L^{p'}_{\mumeasure}}) ,\]
a weak formulation of the elliptic equation in problem~\eqref{completeproblem} would read as
	\[ \int_\domain\phi\vortex\mumeasure = \scalarproduct{\psi}{\phi}{\functionspace} ,\quad\text{for all}\ \phi\in\functionspace .\]
Let us now turn to the interpretation of the circulation conditions.
As in the previous section, we compute for all $j\in\{0,\dots,\nbislands\}$, at least formally:
	\[ \circulation_j = -\int_{\curve_j}\scalarproduct{\depth^{-1}\gradient\psi}{-\eta_j}{\plane}
		= -\int_{\boundary\domain}\scalarproduct{\staticflow_j\depth^{-1}\gradient\psi}{-\eta}{\plane}
		= -\int_\domain\divergence\Big( \staticflow_j\depth^{-1}\gradient\psi \Big) ,\]
so that
	\[ -\circulation_j+\int_\domain\staticflow_j\vortex\mumeasure = \int_\domain\scalarproduct{\gradient\staticflow_i}{\gradient\psi}{\plane}\ \frac{\lebesguemeasure}{\depth} . \]
Accordingly, we propose the following weak formulation of problem~\eqref{completeproblem}.
\begin{definition}\label{weakformulationdefinition}
A function $\psi\in\largefunctionspace$ is a weak solution of problem~\eqref{completeproblem}
if $\psi=u+\rectifycirculation(\vortex)$ for $u\in\functionspace$
such that
	\[ \scalarproduct{u}{\phi}{\functionspace} = \int_\domain\phi\vortex\mumeasure,\quad\text{for all}\ \phi\in\functionspace ,\]
and
	\[ \rectifycirculation(\vortex) = \sum^\nbislands_{i=0}\alpha_i\staticflow_i \]
for $\alpha_0,\dots,\alpha_\nbislands\in\reals$ such that $\alpha_0=0$ and for all $j\in\{0,\dots,\nbislands\}$:
	\[ -\circulation_j+\int_\domain\staticflow_j\vortex\mumeasure = \sum^\nbislands_{i=0}\alpha_i\int_\domain\scalarproduct{\gradient\staticflow_i}{\gradient\staticflow_j}{\plane}\ \frac{\lebesguemeasure}{\depth} . \]
\end{definition}
\begin{theorem}\label{existencecompleteproblem}
Given $\vortex\in L^p(\domain,\mumeasure)$ for $p>1$ and prescribed circulations $\circulation_0,\dots,\circulation_\nbislands\in\reals$ with
	\[ \sum^\nbislands_{i=0}\circulation_i = \int_\domain\vortex\mumeasure ,\]
there exists a unique linear combination
	\[ \rectifycirculation(\vortex) = \sum^\nbislands_{i=1}\alpha_i\staticflow_i \in\largefunctionspace ,\]
where $\staticflow_i$ is given by \cref{existencetambourproblem}; and a unique $\vortexoperator(\vortex)\in\functionspace$,
such that the combined flow $\vortexoperator(\vortex)+\rectifycirculation(\vortex)\in\largefunctionspace$
solves problem~\eqref{completeproblem} in the sense of \cref{weakformulationdefinition}.
\end{theorem}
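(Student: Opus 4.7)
The plan is to split the problem into its two natural components: a part in $\functionspace$ handled by Riesz representation, and a linear combination of the $\staticflow_i$'s handled by \cref{existencecirculationproblem}, with the compatibility assumption on $\circulation_0,\dots,\circulation_\nbislands$ being precisely what is needed to reconcile them.

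For the first component, I would observe that \cref{equivalentscalarproductonfunctionspace} makes $(\functionspace,\scalarproduct{\cdot}{\cdot}{\functionspace})$ into a real Hilbert space, and then argue that $\phi\mapsto\int_\domain\phi\vortex\mumeasure$ defines a bounded linear functional on it. Boundedness follows from the embedding chain $\functionspace\includesin W^{1,2}_0(\plane)\includesin L^{p'}(\plane)$ (Sobolev embedding in dimension two, valid for every finite $p'$, since approximability by $\smoothfunctions(\domain)$ means elements of $\functionspace$ extend by $0$ to the whole plane), combined with the inclusion $L^{p'}(\domain,\lebesguemeasure)\includesin L^{p'}(\domain,\mumeasure)$ coming from the boundedness of $\depth$, together with Hölder's inequality. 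The Riesz representation theorem then yields a unique $\vortexoperator(\vortex)\in\functionspace$ with the required property.

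For the second component, I would rewrite the identities in \cref{weakformulationdefinition} as the linear system $\mathcal{A}(\alpha_0,\dots,\alpha_\nbislands)^\dagger = b^\dagger$ with $b_j := -\circulation_j + \int_\domain\staticflow_j\vortex\mumeasure$. By \cref{existencecirculationproblem} the range of $\mathcal{A}$ is $(1,\dots,1)^\perp$, so solvability reduces to verifying $\sum_{j=0}^\nbislands b_j = 0$. Using the identity $\sum_{j=0}^\nbislands\staticflow_j = 1$ from \cref{lineartambourcombinationisconstant} together with the prescribed compatibility $\sum_j\circulation_j = \int_\domain\vortex\mumeasure$, this sum collapses to $-\int_\domain\vortex\mumeasure + \int_\domain\vortex\mumeasure = 0$. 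Hence $b$ lies in the range of $\mathcal{A}$, and since $\kernel(\mathcal{A})$ is one-dimensional and spanned by $(1,\dots,1)$, the normalization $\alpha_0 = 0$ singles out a unique solution, yielding the unique $\rectifycirculation(\vortex) = \sum_{i=1}^\nbislands\alpha_i\staticflow_i$.

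The only genuine (and still mild) obstacle is the verification of the compatibility $\sum_j b_j = 0$, which is exactly where the physical assumption $\sum_j\circulation_j = \int_\domain\vortex\mumeasure$ is consumed; after that, the combined flow $\vortexoperator(\vortex) + \rectifycirculation(\vortex)$ is by construction a weak solution of problem~\eqref{completeproblem} in the sense of \cref{weakformulationdefinition}, and uniqueness of each piece is inherited from the Riesz representant on one side and from the normalization $\alpha_0 = 0$ on the other.
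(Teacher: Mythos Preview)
Your proposal is correct and follows essentially the same route as the paper. The only cosmetic difference is that the paper obtains $\vortexoperator(\vortex)$ by minimizing the quadratic functional $\phi\mapsto\tfrac12\norm{\phi}{\functionspace}^2-\int_\domain\phi\vortex\mumeasure$ via the direct method (using the compact embedding $\functionspace\includesin L^{p'}(\domain,\mumeasure)$), whereas you invoke the Riesz representation theorem directly; these are equivalent, and your explicit verification of $\sum_j b_j=0$ is in fact more detailed than the paper's one-line appeal to \cref{existencecirculationproblem}.
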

\begin{proof}
Let us consider the functional
	\[ \Phi : \functionspace\to \reals : \phi\mapsto \frac{1}{2}\norm{\phi}{\functionspace}^2 - \int_\domain\phi\vortex\mumeasure .\]
The functional $\Phi$ is strictly convex, hence it admits at most one minimizer. The existence of the minimizer follows from
the lower semi-continuity of the norm and the compact embedding
	\[ (\functionspace,\norm{\cdot}{\functionspace}) \includesin (L^{p'}(\domain,\mumeasure),\norm{\cdot}{L^{p'}_{\mumeasure}}) .\]
We define $\vortexoperator(\vortex)$ as the unique minimizer of $\Phi$. From this variational principle we conclude that
	\[ \scalarproduct{\vortexoperator(\vortex)}{\phi}{\functionspace} = \int_\domain\phi\vortex\mumeasure,\quad\text{for all}\ \phi\in\functionspace ,\]
According to \cref{existencecirculationproblem}, the required coefficients $\alpha_0,\dots,\alpha_\nbislands\in\reals$ are uniquely determined if we impose $\alpha_0=0$.
We thus define
	\[ \rectifycirculation(\vortex) = \sum^\nbislands_{i=0}\alpha_i\staticflow_i .\]
Now by construction of the static flows $\staticflow_i$ for $i=0,\dots,\nbislands$, the function
$\vortexoperator(\vortex)+\rectifycirculation(\vortex)\in\largefunctionspace$ is a weak solution of problem~\eqref{completeproblem}
in the sense of \cref{weakformulationdefinition}. The uniqueness follows from \cref{lineartambourcombinationisconstant}.
\end{proof}
\begin{corollary}[Symmetry]\label{symmetryofoperators}
For all $\vortex_1,\vortex_2\in L^p(\domain,\mumeasure)$, we have
	\[ \int_\domain \vortex_1\vortexoperator(\vortex_2)\mumeasure = \int_\domain\vortex_2\vortexoperator(\vortex_1)\mumeasure \]
and similarly
	\[ \int_\domain \vortex_1\rectifycirculation(\vortex_2)\mumeasure = \int_\domain\vortex_2\rectifycirculation(\vortex_1)\mumeasure . \]
\end{corollary}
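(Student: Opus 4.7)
The plan is to derive both identities from the weak variational formulation of \cref{existencecompleteproblem} combined with the symmetries already recorded for its two building blocks: the scalar product on $\functionspace$, and the matrix $\mathcal{A}$ of \cref{existencecirculationproblem}.

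\textbf{First identity.} The critical observation is that both $\vortexoperator(\vortex_1)$ and $\vortexoperator(\vortex_2)$ belong to $\functionspace$, so each is an admissible test function for the equation satisfied by the other, namely
\[ \scalarproduct{\vortexoperator(\vortex_k)}{\phi}{\functionspace} \;=\; \int_\domain \phi\,\vortex_k\,\mumeasure,\qquad \phi\in\functionspace. \]
Inserting $\phi=\vortexoperator(\vortex_{3-k})$ and using symmetry of the scalar product (\cref{equivalentscalarproductonfunctionspace}) gives
\[ \int_\domain \vortex_k\,\vortexoperator(\vortex_{3-k})\,\mumeasure \;=\; \scalarproduct{\vortexoperator(\vortex_k)}{\vortexoperator(\vortex_{3-k})}{\functionspace} \;=\; \scalarproduct{\vortexoperator(\vortex_{3-k})}{\vortexoperator(\vortex_k)}{\functionspace} \;=\; \int_\domain \vortex_{3-k}\,\vortexoperator(\vortex_{k})\,\mumeasure, \]
which is the first identity.

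\textbf{Second identity.} Expand $\rectifycirculation(\vortex_k) = \sum_{i=0}^{\nbislands} \alpha_i^{(k)} \staticflow_i$ following \cref{weakformulationdefinition}. The coefficients satisfy
\[ -\circulation_j + \int_\domain \staticflow_j\,\vortex_k\,\mumeasure \;=\; \sum_{\ell=0}^{\nbislands} \mathcal{A}_{j\ell}\,\alpha_\ell^{(k)} \qquad (j=0,\dots,\nbislands). \]
I would multiply this relation, for data $\vortex_k$, by $\alpha_j^{(3-k)}$ and sum over $j$. On the left side, the sum $\sum_j \alpha_j^{(3-k)}\int_\domain \staticflow_j\,\vortex_k\,\mumeasure$ is precisely $\int_\domain \rectifycirculation(\vortex_{3-k})\,\vortex_k\,\mumeasure$; on the right, $\sum_{j,\ell}\alpha_j^{(3-k)}\mathcal{A}_{j\ell}\alpha_\ell^{(k)}$ is the quadratic form built from the symmetric matrix $\mathcal{A}$. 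Swapping $k\leftrightarrow 3-k$ leaves this quadratic form invariant by \cref{existencecirculationproblem}, and the leftover circulation contribution $\sum_j \alpha_j^{(3-k)}\circulation_j$ is handled by combining the compatibility $\sum_j \circulation_j = \int_\domain \vortex_k\,\mumeasure$ with the identity $\sum_i \staticflow_i=1$ from \cref{lineartambourcombinationisconstant}, together with the normalization $\alpha_0^{(k)}=0$ that fixes the kernel direction.

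The only (mild) obstacle is this last cancellation: the $\vortexoperator$ statement follows almost immediately from symmetry of the inner product, whereas the $\rectifycirculation$ statement requires one to track the finite-dimensional bookkeeping carefully and to use both the kernel structure of $\mathcal{A}$ and the compatibility of circulations. Once this is in place the whole statement reduces to symmetry of $\mathcal{A}$, itself guaranteed by \cref{existencecirculationproblem}.
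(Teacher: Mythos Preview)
Your argument for the $\vortexoperator$ identity is correct and is exactly the intended one: test the weak equation for $\vortexoperator(\vortex_1)$ with $\phi=\vortexoperator(\vortex_2)\in\functionspace$ and use symmetry of $\scalarproduct{\cdot}{\cdot}{\functionspace}$. The paper states the corollary without proof, and this is the natural reading.

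For the $\rectifycirculation$ identity your reduction to symmetry of $\mathcal{A}$ is the right idea, but the circulation cancellation you defer is a genuine gap, not a bookkeeping detail. Carrying out your computation gives
\[
\int_\domain \vortex_1\,\rectifycirculation(\vortex_2)\,\mumeasure \;-\; \int_\domain \vortex_2\,\rectifycirculation(\vortex_1)\,\mumeasure \;=\; \sum_{j=0}^{\nbislands}\bigl(\alpha_j^{(2)}-\alpha_j^{(1)}\bigr)\circulation_j,
\]
and the ingredients you list---the compatibility $\sum_j\circulation_j=\int_\domain\vortex_k\,\mumeasure$, the identity $\sum_j\staticflow_j=1$, and the normalization $\alpha_0^{(k)}=0$---only constrain \emph{sums} of the $\circulation_j$ and of the $\alpha_j^{(k)}$ separately, not their pairing. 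They do not force the right-hand side above to vanish for arbitrary circulation data. What does make it vanish is a hypothesis the paper leaves implicit when stating these corollaries (observe that \cref{boundednessofoperators} would likewise fail unless $\rectifycirculation(0)=0$): either one reads the corollaries with $\circulation_j\equiv 0$, or equivalently one applies them to the linear part of $\rectifycirculation$ only---which is exactly how the paper uses the statement later, for instance in \cref{energyconvexity} where the relevant object is $\rectifycirculation(\vortex_1-\vortex_2)$ with the circulation contribution cancelled. You should make this hypothesis explicit; once the $\circulation_j$ drop out, your argument via the symmetry of $\mathcal{A}$ goes through cleanly.
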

\begin{corollary}[Linearity]\label{linearityofoperators}
For all $\vortex_1,\vortex_2\in L^p(\domain,\mumeasure)$ and for all $\alpha\in\reals$, we have
	\[ \vortexoperator(\vortex_1+\alpha\vortex_2) = \vortexoperator(\vortex_1) + \alpha\vortexoperator(\vortex_2) .\]
\end{corollary}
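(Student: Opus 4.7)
The plan is to exploit the variational characterization of $\vortexoperator(\vortex)$ given by \cref{existencecompleteproblem}: namely, $\vortexoperator(\vortex)\in\functionspace$ is uniquely determined by the requirement
\[ \scalarproduct{\vortexoperator(\vortex)}{\phi}{\functionspace} = \int_\domain\phi\vortex\mumeasure,\qquad\text{for all}\ \phi\in\functionspace. \]
Since the right-hand side depends linearly on $\vortex$ and the left-hand side is bilinear in its two arguments, linearity of the solution in the datum is essentially automatic from uniqueness.

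Concretely, I would first note that $\vortex_1+\alpha\vortex_2\in L^p(\domain,\mumeasure)$ because $L^p(\domain,\mumeasure)$ is a vector space, so that $\vortexoperator(\vortex_1+\alpha\vortex_2)$ is well defined and lies in $\functionspace$. Since $\functionspace$ itself is a vector space, the combination $\vortexoperator(\vortex_1)+\alpha\vortexoperator(\vortex_2)$ also belongs to $\functionspace$. Testing it against an arbitrary $\phi\in\functionspace$ and using bilinearity of $\scalarproduct{\cdot}{\cdot}{\functionspace}$,
\begin{align*}
\scalarproduct{\vortexoperator(\vortex_1)+\alpha\vortexoperator(\vortex_2)}{\phi}{\functionspace}
&= \scalarproduct{\vortexoperator(\vortex_1)}{\phi}{\functionspace} + \alpha\,\scalarproduct{\vortexoperator(\vortex_2)}{\phi}{\functionspace} \\
&= \int_\domain \phi\vortex_1\mumeasure + \alpha\int_\domain \phi\vortex_2\mumeasure
= \int_\domain \phi(\vortex_1+\alpha\vortex_2)\mumeasure,
\end{align*}
which is precisely the defining equation for $\vortexoperator(\vortex_1+\alpha\vortex_2)$.

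By the uniqueness part of \cref{existencecompleteproblem} (equivalently, the strict convexity of the functional $\Phi$ used to produce the minimizer), the two elements of $\functionspace$ satisfying this identity must agree, giving the desired equality. There is no serious obstacle here: the only subtlety is to keep in mind that the linearity is asserted with respect to the $\mumeasure$-pairing on the right, not with respect to a direct pairing of $\vortexoperator$ values, so the argument genuinely relies on passing through the weak formulation rather than on a formal manipulation of the operator.
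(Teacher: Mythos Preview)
Your argument is correct and is exactly the intended one: the paper states this corollary without proof, and the natural route is precisely to use the weak characterization of $\vortexoperator(\vortex)$ from \cref{existencecompleteproblem} together with its uniqueness. There is nothing to add.
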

\begin{corollary}[Positivity]\label{positivityofoperators}
For all $\vortex\in L^p(\domain,\mumeasure)$ with $\vortex\geq 0$, we have $\vortexoperator(\vortex) \geq 0$.
\end{corollary}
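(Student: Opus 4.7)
The plan is to apply the weak maximum principle strategy to the variational identity defining $u := \vortexoperator(\vortex)$, by testing against $\phi = \negativepart{u}$. Non-negativity of $\vortex$ will force the weighted Dirichlet energy of $\negativepart{u}$ to vanish; membership of $\negativepart{u}$ in $\functionspace$ will then force $\negativepart{u}$ itself to vanish. This mirrors closely the final argument in the proof of \cref{existencetambourproblem}, where $\positivepart{\staticflow_i-1}$ was shown to be zero.

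First I would verify that $\negativepart{u}\in\functionspace$. Since $u\in\functionspace$, there exists a sequence $(u_n)_{n\in\integers}\subset\smoothfunctions(\domain)$ with $u_n\to u$ in $\largefunctionspace$. Each $\negativepart{u_n}$ is Lipschitz and compactly supported in $\domain$, so a standard mollification provides approximants in $\smoothfunctions(\domain)$ converging in $\largefunctionspace$ to $\negativepart{u_n}$, while the truncation map $\phi\mapsto\negativepart{\phi}$ is continuous on $\largefunctionspace$ thanks to the pointwise chain rule $|\gradient\negativepart{\phi}|\leq|\gradient\phi|$. Combining, $\negativepart{u_n}\to\negativepart{u}$ in $\largefunctionspace$, so $\negativepart{u}\in\functionspace$. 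Testing the weak identity $\scalarproduct{u}{\phi}{\functionspace}=\int_\domain\phi\vortex\mumeasure$ with $\phi=\negativepart{u}$ and using the almost-everywhere identity $\gradient u\cdot\gradient\negativepart{u} = -|\gradient\negativepart{u}|^2$ yields
\[ \int_\domain\negativepart{u}\vortex\mumeasure \;=\; \int_\domain\scalarproduct{\gradient u}{\gradient\negativepart{u}}{\plane}\,\frac{\lebesguemeasure}{\depth} \;=\; -\int_\domain|\gradient\negativepart{u}|^2\,\frac{\lebesguemeasure}{\depth}. \]
The left-hand side is non-negative because $\vortex,\negativepart{u}\geq 0$, while the right-hand side is non-positive; hence both vanish.

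From $\gradient\negativepart{u}=0$ almost everywhere I conclude that $\negativepart{u}$ is constant on each connected component of $\domain$; since $\functionspace$ is the closure in $\largefunctionspace$ of $\smoothfunctions(\domain)$, that constant must be $0$, exactly as in the endgame of \cref{existencetambourproblem}. Therefore $\negativepart{u}=0$, i.e.\ $\vortexoperator(\vortex)\geq 0$. The step requiring the most care is showing that truncation preserves membership in $\functionspace$ rather than only in $\largefunctionspace$, because $\functionspace$ encodes boundary behavior and the weight $\depth$ may degenerate at $\boundary\domain$; the approximation argument above is robust enough to cover both regularity regimes admitted by the lake definition.
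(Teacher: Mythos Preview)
Your proposal is correct and matches what the paper evidently intends: the corollary is stated without proof, but the weak maximum principle argument you give is precisely the technique the paper employs in the proof of \cref{existencetambourproblem} to show $0\le\staticflow_i\le 1$, and you explicitly model your argument on that passage. There is no alternative route hidden in the paper; your treatment of the only delicate point---that $\negativepart{u}$ remains in $\functionspace$ rather than merely $\largefunctionspace$---is also the implicit justification behind the paper's own line ``By construction of $\staticflow_i$, the positive part $\positivepart{\staticflow_i-1}$ belongs to $\functionspace$.''
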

\begin{corollary}[Boundedness]\label{boundednessofoperators}
There exists constants $C,C'>0$ such that
For all $\vortex\in L^p(\domain,\mumeasure)$ with $\vortex\geq 0$, we have
	\[ \norm{\vortexoperator(\vortex)}{\functionspace} \leq C\,\norm{\vortex}{L^p_{\mumeasure}} ,\]
and
	\[ \norm{\rectifycirculation(\vortex)}{\largefunctionspace}\leq C'\,\norm{\vortex}{L^1_{\mumeasure}} .\]
\end{corollary}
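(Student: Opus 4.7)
The plan is to derive the two estimates separately from the variational characterizations in \cref{existencecompleteproblem}, and I expect both to reduce to clean duality or linear-algebra arguments.

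For the bound on $\vortexoperator$, I would test the defining identity $\scalarproduct{\vortexoperator(\vortex)}{\phi}{\functionspace} = \int_\domain \phi\vortex\mumeasure$ with $\phi = \vortexoperator(\vortex)\in\functionspace$ itself. This yields $\norm{\vortexoperator(\vortex)}{\functionspace}^2 = \int_\domain \vortexoperator(\vortex)\vortex\mumeasure$, which H\"older's inequality in $L^p(\domain,\mumeasure)$ bounds by $\norm{\vortexoperator(\vortex)}{L^{p'}_\mumeasure}\norm{\vortex}{L^p_\mumeasure}$. The continuous chain of embeddings $\functionspace \includesin W^{1,2}(\domain) \includesin L^{p'}(\domain,\lebesguemeasure) \includesin L^{p'}(\domain,\mumeasure)$ already noted before \cref{weakformulationdefinition}, which relies on \cref{equivalentscalarproductonfunctionspace}, the Sobolev embedding and the boundedness of $\depth$, furnishes a constant $C_1$ with $\norm{\vortexoperator(\vortex)}{L^{p'}_\mumeasure} \leq C_1 \norm{\vortexoperator(\vortex)}{\functionspace}$. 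Dividing out $\norm{\vortexoperator(\vortex)}{\functionspace}$ (the trivial case aside) yields the first bound with $C = C_1$; the sign assumption on $\vortex$ is in fact not required for this half.

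For the bound on $\rectifycirculation$, I would unfold $\rectifycirculation(\vortex) = \sum_{i=0}^\nbislands \alpha_i \staticflow_i$ with $\alpha_0 = 0$, and observe that $(\alpha_0,\dots,\alpha_\nbislands)$ is determined by the linear system $(\mathcal{A}\alpha)_j = -\circulation_j + \int_\domain \staticflow_j\vortex\mumeasure$, where $\mathcal{A}$ is the Gram matrix from \cref{existencecirculationproblem}. Since the restriction of $\mathcal{A}$ to the orthogonal complement of its kernel $(1,\dots,1)$ is a linear isomorphism, a fixed constant controls $|\alpha|$ by the Euclidean norm of the right-hand side. The maximum principle estimates $0 \leq \staticflow_j \leq 1$ from \cref{existencetambourproblem} give $|\int_\domain \staticflow_j\vortex\mumeasure| \leq \norm{\vortex}{L^1_\mumeasure}$, and combined with the fixed finite $\largefunctionspace$-norms of the $\staticflow_i$ this produces the announced linear control.

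The only mildly delicate point, and the one I would expect to be the main obstacle, is how to absorb the fixed circulation parameters $\circulation_0,\dots,\circulation_\nbislands$ into an estimate expressed only in terms of $\norm{\vortex}{L^1_\mumeasure}$. The consistency relation $\sum_i \circulation_i = \int_\domain \vortex\mumeasure$ imposed in \cref{existencecompleteproblem}, together with the sign hypothesis $\vortex \geq 0$, ties the total circulation to $\norm{\vortex}{L^1_\mumeasure}$ up to the fixed splitting of the total mass among shores, and the constant $C'$ is then allowed to depend on this splitting.
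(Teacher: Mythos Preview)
Your argument is correct and is exactly the natural one implicit in the paper, which states this corollary without proof. Testing the defining relation against $\vortexoperator(\vortex)$ itself and invoking the embedding chain is the standard route for the first bound, and inverting the Gram matrix $\mathcal{A}$ on $\kernel(\mathcal{A})^\perp$ together with the pointwise bounds $0\leq\staticflow_j\leq 1$ is precisely what the structure of \cref{existencecirculationproblem} and \cref{existencetambourproblem} is set up to deliver for the second.

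Your identification of the circulation term as the only genuine subtlety is accurate. The statement as written does not make fully explicit how the $\circulation_j$ vary with $\vortex$, but the paper's actual use of this estimate in \cref{sectionAsymptotic} fixes $\circulation_{\epsilon;i}=\circulation_i(2\tau-1)\vortexstrength_\epsilon$ with $\sum_i\circulation_i=1$, so the circulations are indeed fixed multiples of $\int_\domain\vortex\mumeasure=\norm{\vortex}{L^1_{\mumeasure}}$ (for $\vortex\geq 0$), and your reading that $C'$ absorbs this fixed splitting is the intended one.
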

\begin{proposition}\label{weakstrongcontinuity}
If a sequence $(\vortex_n)_{n\in\integers}$ of $L^p(\domain,\mumeasure)$ functions is bounded in $L^p(\domain,\mumeasure)$
and weakly converges to some $\vortex\in L^p(\domain,\mumeasure)$, then the sequences $(\vortexoperator(\vortex_n))_{n\in\integers}$
and $(\rectifycirculation(\vortex_n))_{n\in\integers}$ both converge strongly, respectively to $\vortexoperator(\vortex)$ in $\functionspace$
and $\rectifycirculation(\vortex)$ in $\largefunctionspace$.
\end{proposition}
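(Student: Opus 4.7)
The plan is to split the proof according to the decomposition $\vortexoperator + \rectifycirculation$, since the two pieces are handled by different mechanisms: $\rectifycirculation$ is essentially a finite-dimensional operator whose image sits in the span of $\staticflow_0, \ldots, \staticflow_\nbislands$, whereas $\vortexoperator$ requires the compact embedding $\functionspace \includesin L^{p'}(\domain, \mumeasure)$ that was already invoked in the proof of \cref{existencecompleteproblem}.

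For $\rectifycirculation$, I would write $\rectifycirculation(\vortex_n) = \sum_{i=0}^\nbislands \alpha_i^{(n)} \staticflow_i$ with $\alpha_0^{(n)} = 0$, where the coefficients solve the linear system
\[ \sum_{i=0}^\nbislands \alpha_i^{(n)} \mathcal{A}_{ij} = -\circulation_j + \int_\domain \staticflow_j \vortex_n \mumeasure , \qquad j = 0, \ldots, \nbislands . \]
Since \cref{existencetambourproblem} guarantees $0 \leq \staticflow_j \leq 1$, each $\staticflow_j$ lies in $L^{p'}(\domain, \mumeasure)$, and weak convergence $\vortex_n \rightharpoonup \vortex$ in $L^p(\domain, \mumeasure)$ gives convergence of the right-hand side to $-\circulation_j + \int_\domain \staticflow_j \vortex \mumeasure$ for every $j$. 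The normalization $\alpha_0^{(n)} = 0$ combined with \cref{existencecirculationproblem} makes this finite-dimensional system well-posed, so $\alpha_i^{(n)} \to \alpha_i$ and thus $\rectifycirculation(\vortex_n) \to \rectifycirculation(\vortex)$ in $\largefunctionspace$.

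For $\vortexoperator$, the engine is the variational identity
\[ \norm{\vortexoperator(\vortex_n) - \vortexoperator(\vortex)}{\functionspace}^2 = \int_\domain \bigl(\vortexoperator(\vortex_n) - \vortexoperator(\vortex)\bigr)(\vortex_n - \vortex) \mumeasure , \]
obtained by applying linearity (\cref{linearityofoperators}) and testing $\scalarproduct{\vortexoperator(\eta)}{\phi}{\functionspace} = \int_\domain \phi \eta \mumeasure$ against $\phi = \vortexoperator(\vortex_n - \vortex) \in \functionspace$. By \cref{boundednessofoperators} the sequence $(\vortexoperator(\vortex_n))$ is bounded in $\functionspace$, so a subsequence converges weakly in $\functionspace$; the compact embedding $\functionspace \includesin L^{p'}(\domain, \mumeasure)$ upgrades this to strong convergence in $L^{p'}(\domain, \mumeasure)$. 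Passing to the limit in the defining identity for each fixed $\phi \in \functionspace$ forces the weak limit to equal $\vortexoperator(\vortex)$, so by uniqueness the whole sequence, not merely a subsequence, converges strongly in $L^{p'}(\domain, \mumeasure)$. Hölder's inequality applied to the right-hand side of the identity above, combined with boundedness of $(\vortex_n - \vortex)$ in $L^p(\domain, \mumeasure)$, then drives the norm on the left to zero.

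The only real technical input is the compact embedding $\functionspace \includesin L^{p'}(\domain, \mumeasure)$; this is already used in \cref{existencecompleteproblem}, where it enters via the continuous embedding $\functionspace \includesin W^{1,2}(\domain)$ (\cref{equivalentscalarproductonfunctionspace}), the Rellich--Kondrachov theorem, and the boundedness of $\depth$. Everything else is a routine weak/strong duality argument combined with the variational characterization of $\vortexoperator$, so I do not expect a substantial obstacle.
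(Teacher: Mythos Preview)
Your proposal is correct and follows essentially the same route as the paper's proof: the $\rectifycirculation$ part is handled identically via convergence of the finitely many scalars $\int_\domain \staticflow_j \vortex_n \mumeasure$, and the $\vortexoperator$ part likewise passes through the compact embedding $\functionspace \includesin L^{p'}(\domain,\mumeasure)$, identification of the limit, and the variational identity for $\norm{\vortexoperator(\vortex_n)-\vortexoperator(\vortex)}{\functionspace}^2$. The only cosmetic difference is that the paper identifies the $L^{p'}$ limit of $\vortexoperator(\vortex_{n_k})$ by the symmetry relation $\int_\domain \phi\,\vortexoperator(\vortex_{n_k})\mumeasure = \int_\domain \vortexoperator(\phi)\,\vortex_{n_k}\mumeasure$ (\cref{symmetryofoperators}) rather than first passing through weak $\functionspace$-convergence, and then expands the squared norm into three terms instead of applying H\"older to the single product; the substance is the same.
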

\begin{proof}
We first prove the claim for the operator $\vortexoperator$.
Since we have a compact embedding $(\functionspace,\norm{\cdot}{\functionspace})\includesin(L^{p'}(\domain,\mumeasure),\norm{\cdot}{L^{p'}_{\mumeasure}})$,
it is sufficient to prove that every accumulation point of the sequence $(\vortexoperator(\vortex_n))_{n\in\integers}$
in the sense of strong topology of $L^{p'}(\domain,\mumeasure)$ equals $\vortexoperator(\vortex)$,
and that the convergence also occurs in $(\functionspace,\norm{\cdot}{\functionspace})$. Let then $u\in L^{p'}(\domain,\mumeasure)$
be an accumulation point of $(\vortexoperator(\vortex_n))_{n\in\integers}$ in $L^{p'}(\domain,\mumeasure)$. There exists a subsequence
$(\vortexoperator(\vortex_{n_k}))_{k\in\integers}$ converging in $L^{p'}(\domain,\mumeasure)$ to $u\in L^{p'}(\domain,\mumeasure)$.
In particular, for all test function $\phi\in\smoothfunctions(\domain)$, we have
	\[ \int_\domain\phi u\mumeasure = \lim_{k\to+\infty}\int_\domain\phi \vortexoperator(\vortex_{n_k})\mumeasure
			= \lim_{k\to+\infty}\int_\domain\vortexoperator(\phi)\vortex_{n_k}\mumeasure = \int_\domain\vortexoperator(\phi)\vortex\mumeasure
			= \int_\domain\phi\vortexoperator(\vortex)\mumeasure. \]
Therefore $u=\vortexoperator(\vortex)$ in $L^p(\domain,\mumeasure)$, that is, almost-everywhere on $\domain$.
On the other hand, we also have
	\[ \norm{\vortexoperator(\vortex)-\vortexoperator(\vortex_{n_k})}{\functionspace}^2
		= \int_\domain\vortex\vortexoperator(\vortex)\mumeasure + \int_\domain\vortex{n_k}\vortexoperator(\vortex_{n_k})\mumeasure
			- 2\int_\domain\vortex_{n_k}\vortexoperator(\vortex) \mumeasure .\]
The right side converges to $0$, and therefore $(\vortexoperator(\vortex_{n_k}))_{k\in\integers}$ converges strongly in $(\functionspace,\norm{\cdot}{\functionspace})$
to the function $\vortexoperator(\vortex)$.

Let us now prove the similar statement for the operator $\rectifycirculation$.
By construction, the coefficients $\alpha_0,\dots,\alpha_\nbislands$ given by \cref{existencecirculationproblem} depends on
the vortex function $\vortex$ only through the perturbed circulations
	\[ -\circulation_j + \int_\domain\staticflow_j\vortex\mumeasure .\]
In particular, weak convergence of the sequence $(\vortex_n)_{n\in\integers}$ implies strong convergence in $\reals$
of each perturbed circulations, and so strong convergence in $\reals$ of each coefficient $\alpha_j$.
Therefore the sequence $(\rectifycirculation(\vortex_n))_{n\in\integers}$ strongly converges in $\largefunctionspace$ to some $u\in\largefunctionspace$
and by uniqueness, we must have the identity $u=\rectifycirculation(\vortex)$.
\end{proof}

\subsection{Continuous lakes}

Let us write $\greenlaplace:\domain\times\domain\to\reals$
the Green's function for the Laplace's operator $\laplacian$ with Dirichlet's boundary conditions on $\boundary\domain$. We extend $\greenlaplace$
by $0$ on $\closure{\domain}\times\closure{\domain}$. The following
lemma ensures that this extension makes sense, at least in the
sense of weak convergence in every $W^{1,q}_0(\domain),q\in(1,2)$:
\begin{lemma}
The function $\Big[y\in\closure{\domain}\mapsto \greenlaplace(\cdot,y)\Big]$
is continuous in the sense of
weak convergence in $W^{1,q}_0(\domain)$, for any $q\in(1,2)$.
\end{lemma}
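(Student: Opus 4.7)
The plan is to combine weak compactness in $W^{1,q}_0(\domain)$ with identification of the limit through the distributional equation $\laplacian \greenlaplace(\cdot,y)=\delta_y$.

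First, I would invoke the classical fact that $\greenlaplace(\cdot,y)\in W^{1,q}_0(\domain)$ for every $q\in(1,2)$, with a uniform bound
\[ \norm{\greenlaplace(\cdot,y)}{W^{1,q}_0(\domain)} \leq \ncste(q,\domain), \]
independent of $y\in\closure{\domain}$. This can be obtained from the decomposition $\greenlaplace(x,y) = \frac{1}{2\pi}\log\frac{1}{|x-y|} - \regulargreenlaplace(x,y)$ into the Newtonian kernel and a regular harmonic part, combined with the uniform estimate $\int_\domain|x-y|^{-q}\lebesguemeasure(x)\leq \ncste(q,\domain)$ for $q<2$, or more conceptually via Stampacchia's duality together with the two-dimensional Sobolev embedding $W^{1,q'}_0(\domain) \includesin C(\closure{\domain})$ valid for $q'>2$: the functional $v\mapsto v(y)$ has operator norm $\leq \ncste$ on $W^{1,q'}_0(\domain)$, uniformly in $y$, hence its Riesz--duality representative $\greenlaplace(\cdot,y)$ has $W^{1,q}_0$-norm $\leq \ncste$.

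Now let $y_n\to y$ in $\closure{\domain}$. By the uniform bound and reflexivity of $W^{1,q}_0(\domain)$, any subsequence of $\{\greenlaplace(\cdot,y_n)\}$ admits a further weakly convergent subsequence with some limit $u\in W^{1,q}_0(\domain)$. To identify $u$, I would test the defining equation for the Green's function against an arbitrary $\phi\in\smoothfunctions(\domain)$:
\[ \int_\domain \scalarproduct{\gradient\greenlaplace(\cdot,y_n)}{\gradient\phi}{\plane}\lebesguemeasure = \phi(y_n). \]
For $y_n\in\domain$ this is the defining property of the Green's function; when $y_n\in\boundary\domain$, both sides vanish: the left by the convention $\greenlaplace(\cdot,y_n)=0$, the right because $\mathrm{supp}\,\phi$ is compactly contained in $\domain$ and thus disjoint from $\boundary\domain$. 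Passing to the limit, using weak convergence on the left and continuity of $\phi$ (extended by $0$ on $\boundary\domain$) on the right, yields
\[ \int_\domain \scalarproduct{\gradient u}{\gradient\phi}{\plane}\lebesguemeasure = \phi(y). \]
If $y\in\domain$, this identifies $u=\greenlaplace(\cdot,y)$ by uniqueness of the Dirichlet problem $\laplacian u=\delta_y$ in the class $W^{1,q}_0(\domain)$; if $y\in\boundary\domain$, then $u$ is a harmonic function in $W^{1,q}_0(\domain)$, hence $u=0$, consistent with the convention $\greenlaplace(\cdot,y)=0$. Since the weak limit is the same in every extracted subsequence, the whole sequence converges weakly in $W^{1,q}_0(\domain)$.

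The main obstacle is Step~1: obtaining a $W^{1,q}_0$-bound on $\greenlaplace(\cdot,y)$ that is genuinely uniform as $y$ approaches $\boundary\domain$, since the regular part $\regulargreenlaplace(\cdot,y)$ does pick up large boundary contributions near such $y$. The duality argument through $W^{1,q'}_0 \includesin C(\closure{\domain})$ in dimension two is decisive here, as it reduces the $W^{1,q}_0$-estimate on $\greenlaplace(\cdot,y)$ to the elementary estimate $|v(y)|\leq \ncste\,\norm{v}{W^{1,q'}_0(\domain)}$, which is clearly uniform in $y\in\closure{\domain}$.
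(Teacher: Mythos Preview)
Your proof is correct and follows essentially the same route as the paper: a uniform $W^{1,q}_0$-bound obtained by duality with the embedding $W^{1,q'}_0\hookrightarrow C(\closure{\domain})$, weak sequential compactness, and identification of the limit through the defining relation $\int_\domain\scalarproduct{\gradient\greenlaplace(\cdot,y)}{\gradient\phi}{\plane}\lebesguemeasure=\phi(y)$. The only cosmetic difference is that the paper phrases the identification step via James' representation of functionals on $W^{1,q}_0(\domain)$ rather than invoking uniqueness of the Dirichlet problem directly; the two formulations are equivalent.
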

\begin{proof}
Let $q\in(1,2)$ be fixed. The fact that $\greenlaplace(\cdot,y)$
is uniformly bounded in $W^{1,q}_0(\domain)$ as $y$ varies
in $\closure{\domain}$ follows from standard argument.
Let $(y_n)_{n\in\integers}$ be a sequence
of points in $\domain$ converging to some $y\in\closure{\domain}$.
The sequence $\big(\greenlaplace(\cdot,y_n)\big)_{n\in\integers}$
admits at least one accumulation point $g\in W^{1,q}_0(\domain)$
in the sense of weak convergence in $W^{1,q}_0(\domain)$. We are done if
we prove $g=\greenlaplace(\cdot,y)$, since the weak convergence on
a bounded set defines a metrizable topology.
Let $L:W^{1,q}_0(\domain)\to\reals$
be any non null continuous linear functional,
where the norm of $W^{1,q}_0(\domain)$
is understood as the gradient norm in $L^q(\domain)$.
According to James' representation theorem\cite{Willem}*{Proposition~5.2.3},
there exists a (unique) $u\in W^{1,q}_0(\domain)$ such that
$\norm{u}{W^{1,q}_0}=1$ and
	\[ L(v)
		= \norm{L}{(W^{1,q}_0)^\star}\int_\domain\scalarproduct{\gradient v}{\gradient u}{\plane}\,|\gradient u|^{q-2}
		= \norm{L}{(W^{1,q}_0)^\star}\int_\domain\scalarproduct{\gradient v}{\gradient\frac{|u|^{q-1}}{q-1}}{\plane} .\]
Now it is readily checked that there exists a sequence $(\varphi_n)_{n\in\integers}$ of $C^1_c(\domain)$ functions converging in $W^{1,\frac{q}{q-1}}_0(\domain)$ to $|u|^{q-1}$, so that
	\[ L\big(\greenlaplace(\cdot,y)-g\big)
		= \lim_{n\to+\infty}\norm{L}{(W^{1,q}_0)^\star}\int_\domain\scalarproduct{\gradient\big(\greenlaplace(\cdot,y)-g\big)}{\gradient\varphi_n}{\plane} = 0 .\]
In the last line we have used the definition of the Green's function
together with the definition of the weak limit $g$.
This shows that every element of $\big(W^{1,q}_0(\domain)\big)^\star$
vanishes on $\greenlaplace(\cdot,y)-g$. This forces
$\greenlaplace(\cdot,y)=g$, and therefore the sequence
$\big(\greenlaplace(\cdot,y_n)\big)_{n\in\integers}$ converges
weakly to $\greenlaplace(\cdot,y)$.
\end{proof}
As a matter of fact, we mention that
the uniform boundedness argument for the Green's function that we rely on
in the previous lemma may be proved through at least three different
approaches: through regularity and a duality argument
in the spirit of~\cite{GFR1} where the characterization of the dual of $L^\infty(\domain)$ as a set of finitely additive measures;
from complex analysis through
the Riemann conformal mapping theorem; or from direct measure geometric like
argument~\cite{GFR2}. We do not enter into details.
\begin{definition}\label{continuouslakes}
A lake $(\domain,\depth)$ is said to be \emph{continuous} if
	\begin{enumerate}
		\item the function $\depth\in C(\closure{\domain})$ admits weak derivatives,
		\item there exists $\ell>2$ such that function
			\[ y\in\domain \mapsto \int_\domain g(\cdot,y)^\ell \ \bigg(\frac{|\gradient b|^2}{b}\bigg)^{\frac{\ell}{2}}\ \lebesguemeasure \]
		is bounded on $\closure{\domain}$,
		\item for all $y_\star\in\closure{\domain}$, we have
			\[ \lim_{y\to y_\star}\int_\domain |g(\cdot,y)-g(\cdot,y_\star)|^\ell \ \bigg(\frac{|\gradient b|^2}{b}\bigg)^{\frac{\ell}{2}}\ \lebesguemeasure = 0 . \]
	\end{enumerate}
\end{definition}
Note that the fact that $\depth$ is positive on compact subsets implies that $\depth\in W^{1,\ell}_{\text{loc}}(\domain)$, for some $\ell>2$.
In particular we must have $C^{0,\alpha}(K)$ for all compact subsets $K\Subset\domain$, and for some $\alpha>0$ depending on $K$.
The main theorem we are going to use on continuous lakes is the following:
\begin{theorem}\label{reduciblelake}
Let $(\domain,\depth)$ be a continuous lake.
There exists a bounded measurable function $\rectifykernel:\domain\times\domain\to\reals$
such that, for all $\vortex\in L^p(\domain,\mumeasure)$, $p>1$, we have almost-everywhere
	\[ \vortexoperator\vortex(x) + \rectifycirculation\vortex(x)
		= \depth(x)\int_\domain\greenlaplace(x,y)\vortex(y)\mumeasure(y) + \int_\domain\rectifykernel(x,y)\vortex(y)\mumeasure(y) .\]
Furthermore, we have $\rectifykernel(\cdot,y)\in\functionspace$
for all $y\in\domain$, with for all $\varphi\in\functionspace$:
	\[ \int_\domain\scalarproduct{\gradient\rectifykernel(\cdot,y)}{\gradient\varphi}{\plane}\ \frac{\lebesguemeasure}{\depth}
	= -\int_\domain\scalarproduct{\greenlaplace(\cdot,y)\gradient\depth}{\gradient\varphi}{\plane}\ \frac{\lebesguemeasure}{\depth} .\]
\end{theorem}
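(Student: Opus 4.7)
For each fixed $y \in \domain$, I would define $\rectifykernel(\cdot, y) \in \functionspace$ as the Riesz representative of the linear functional
\[ L_y(\varphi) = -\int_\domain \scalarproduct{\greenlaplace(\cdot, y)\gradient\depth}{\gradient\varphi}{\plane}\, \frac{\lebesguemeasure}{\depth}, \]
using that $\functionspace$ equipped with $\scalarproduct{\cdot}{\cdot}{\functionspace}$ is a Hilbert space by \cref{equivalentscalarproductonfunctionspace}. Continuity of $L_y$ reduces via Cauchy-Schwarz to controlling $\int_\domain \greenlaplace(\cdot, y)^2 |\gradient\depth|^2/\depth\, \lebesguemeasure$ uniformly in $y$, and this is an immediate consequence of Hölder's inequality with exponents $\ell/2$ and $\ell/(\ell - 2)$ applied to condition (2) of \cref{continuouslakes}.

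\textbf{Boundedness and measurability.} The equation satisfied by $R := \rectifykernel(\cdot, y)$ reads, in the weak sense, as $-\divergence(\depth^{-1}\gradient R) = \divergence(\depth^{-1}\greenlaplace(\cdot, y)\gradient\depth)$, that is, a divergence-form PDE whose source is itself in divergence form. The key input is that the weight $|\greenlaplace(\cdot, y)\gradient\depth|^2/\depth$ belongs uniformly to $L^{\ell/2}(\lebesguemeasure)$ with $\ell/2 > 1$, which is exactly what is required to run a Stampacchia truncation argument in the spirit of \cite{GilbardTrudinger}*{Theorem~8.15}, adapted to the weighted setting: testing against $(R - k)_+$, absorbing on the left, and iterating over level sets yield a uniform $L^\infty$-bound on $R$. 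Joint measurability of $\rectifykernel$ then follows from strong continuity of $y \mapsto \rectifykernel(\cdot, y) \in \functionspace$, which is obtained by subtracting the variational identities at $y_1$ and $y_2$ and invoking condition (3) of \cref{continuouslakes}.

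\textbf{Verification of the representation.} Let $u(x) = \int_\domain \greenlaplace(x, y)\vortex(y)\mumeasure(y)$, so that $u \in W^{1,2}_0(\domain)$ is the Dirichlet solution of $-\Delta u = \depth\vortex$. Define $A = \depth u$ and $B(x) = \int_\domain \rectifykernel(x, y)\vortex(y)\mumeasure(y)$. The decomposition $\gradient A = u\gradient\depth + \depth\gradient u$, combined with Fubini in the computation of $\int_\domain \gradient B \cdot \gradient\varphi/\depth\, \lebesguemeasure$ and the defining identity of $\rectifykernel$, exhibits an exact cancellation of the cross term $\int_\domain u\gradient\depth \cdot \gradient\varphi/\depth\, \lebesguemeasure$; what remains is
\[ \int_\domain\scalarproduct{\gradient(A + B)}{\gradient\varphi}{\plane}\, \frac{\lebesguemeasure}{\depth} = \int_\domain \scalarproduct{\gradient u}{\gradient\varphi}{\plane}\, \lebesguemeasure = \int_\domain \varphi\vortex\mumeasure \]
for every $\varphi \in \functionspace$. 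This is precisely the weak equation characterizing $\vortexoperator\vortex + \rectifycirculation\vortex$; since $B \in \functionspace$ by construction and $A = \depth u$ inherits the vanishing boundary trace of $u$, the decomposition of $A + B$ matches that required by \cref{weakformulationdefinition}, and uniqueness in \cref{existencecompleteproblem} identifies $A + B$ with $\vortexoperator\vortex + \rectifycirculation\vortex$.

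\textbf{Main obstacle.} The critical step is the $L^\infty$-bound on $\rectifykernel$. The equation is degenerate as $\depth \to 0$ near $\boundary\domain$, so the ellipticity constants blow up and the constants in the Stampacchia iteration depend delicately on how the source integrability $\ell > 2$ interacts with the weight $\depth^{-1}$. Condition (2) of \cref{continuouslakes} is precisely the trade-off that makes the iteration close and keeps the resulting bound uniform in $y$, which is what is needed for $\rectifykernel$ to be a bounded function on $\domain \times \domain$.
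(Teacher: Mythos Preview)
Your approach is essentially the paper's: Riesz representation to build $\rectifykernel(\cdot,y)$, an iteration argument for the $L^\infty$ bound, and direct verification that the ansatz $\depth\psi+\int_\domain\rectifykernel(\cdot,y)\vortex\mumeasure$ solves the weak equation. The only technical difference is that the paper runs a Moser-type iteration (testing the equation against $G\circ u$ with $u=\max\{\rectifykernel(\cdot,y),\kappa\}$ and $G'=|F'|^2$ for $F(t)=\min\{N,t^\beta\}$, then iterating over $\beta$) rather than your Stampacchia level-set truncation; both close under condition~(2) of \cref{continuouslakes} and give a bound uniform in $y$.

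There is, however, a slip in your final identification. Having shown that $A+B\in\functionspace$ satisfies
\[
\int_\domain\scalarproduct{\gradient(A+B)}{\gradient\varphi}{\plane}\,\frac{\lebesguemeasure}{\depth}=\int_\domain\varphi\vortex\mumeasure
\quad\text{for all }\varphi\in\functionspace,
\]
what you have pinned down is $A+B=\vortexoperator\vortex$, not $\vortexoperator\vortex+\rectifycirculation\vortex$. The correction $\rectifycirculation\vortex=\sum_i\alpha_i\staticflow_i$ lies \emph{outside} $\functionspace$ (each $\staticflow_i$ has nontrivial boundary data) and is orthogonal to $\functionspace$ for the form $\scalarproduct{\cdot}{\cdot}{\functionspace}$, so it is invisible to the test you perform. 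Invoking uniqueness in \cref{existencecompleteproblem} would require matching the circulation constraints of \cref{weakformulationdefinition}, which you do not check---and which $A+B$ with the trivial $\rectifycirculation$-part does not satisfy in general. The paper's appendix proof in fact concludes only $\phi=\vortexoperator\vortex$; the passage to $\vortexoperator\vortex+\rectifycirculation\vortex$ in the stated formula requires augmenting $\rectifykernel$ by the bounded kernel coming from $\rectifycirculation$, namely $\sum_{i,j}(\mathcal{A}^{-1})_{ij}\staticflow_i(x)\staticflow_j(y)$, and the ``furthermore $\rectifykernel(\cdot,y)\in\functionspace$'' clause then refers only to the unaugmented piece.
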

The proof of \cref{reduciblelake} is based on standard ideas from regularity theory.
The so-constructed function $\rectifykernel$ is measurable on the product space $\domain\times\domain$, which will allow us
to manipulate it through Fubini's theorem. In fact, we can prove that $\rectifykernel$ is continuous on $\domain\times\domain$,
but this extra assumption will not be used in the text.
The details are done in \cref{appendix}. Below we give examples of continuous lakes $(\domain,\depth)$.
\begin{example}
If $\depth\in W^{1,\infty}(\domain)$ with $\inf_\domain\depth>0$, then $(\domain,\depth)$ is a continuous lake.
Here we rely on the property that $\greenlaplace(\cdot,y)$ converges weakly in $W^{1,q}_0(\domain)$
to $\greenlaplace(\cdot,y_\star)$ as $y$ converges to $y_\star$; then we use Rellich-Kondrashov's theorem to obtain
strong convergence in $L^p(\domain,\lebesguemeasure)$ for all $p\in[1,+\infty)$.
\end{example}
\begin{example}
If $\delta:\closure{\domain}\to\reals$ is a regularization of the distance at the boundary,
then any function $\depth:\domain\to\reals$ that satisfies
	\[ \limsup_{\delta(x)\to 0}\bigg\{ \frac{1}{\delta(x)^\beta}\,\frac{|\gradient\depth(x)|^2}{\depth(x)} \bigg\} < +\infty \]
for some small exponent $\beta\in[0,2)$, yields to a continuous lake $(\domain,\depth)$.
This follows from the Hardy's inequality for regular domains $\domain$.
This family of examples covers the cases of $\depth=\delta^\alpha$, for sufficiently well behaving $\alpha\in C^1(\closure{\domain})$.
Indeed we have
	\[ \frac{|\gradient\depth|^2}{\depth}
		\sim \delta^\alpha|\log\delta|^2|\gradient\alpha|^2 + \delta^{\alpha-2}\alpha|\gradient\delta|^2 .\]
If $\inf_\domain\alpha>0$, then $\alpha-2>-2$ uniformly on $\domain$, and the Hardy's inequality may be applied.
If, for example, $\alpha=\delta^p$ for some $p>1$, then we would have
	\[ \frac{|\gradient\depth|^2}{\depth} \sim \delta^{\delta^p+2(p-1)}|\log\delta|^2 + \delta^{\delta^p-2+p}|\gradient\delta|^2 \]
and again $\delta^p-2+p>-1$ uniformly on $\domain$. Observe that the special case $(\domain,\depth)$ with $\domain=B(0,1)$
and $\depth(x)=1-|x|$, does not satisfy the $\mathcal{A}_2$ Muckenhoupt condition.
\end{example}
\begin{example}\label{examplesingulardomain}
Let us consider $\domain=[-1,1]^2\setminus\{(0,y):y\leq 0\}$ the square of side length $2$ centered on $0$,
without one half inner vertical median. On this set we consider the function
	\[ \depth : \domain\to\reals^+ : \depth(x_1,x_2) = |x_1| + x_2\chi_{\{x_2>0\}} .\]
So the lake $(\domain,\depth)$ has a line shore.
We compute
	\[ \frac{|\gradient\depth(x_1,x_2)|^2}{\depth(x_1,x_2)} \leq \frac{2}{|x_1| + x_2\chi_{\{x_2>0\}}} .\]
By linearity of the integral, we do not lose in generality in splitting $\domain$ into three disjoints parts:
	\begin{align*}
		\domain_1 &:= \big\{(x_1,x_2) \in\domain : |x_1|<1, 0<x_2<1 \big\} ,\\
		\domain_2 &:= \big\{(x_1,x_2) \in\domain : (x_1\geq 1, x_2>0)\ \text{or}\ (x_2\geq 1) \big\} ,\\
		\domain_3 &:= \big\{(x_1,x_2) \in\domain : x_2\leq 0 \big\} .
	\end{align*}
On $\domain_2$ the function $\depth^{-1}|\gradient\depth|^2$ is uniformly bounded, so one may rely on the first example to
control the contribution from $\domain_2$. On $\domain_1$ we always have
	\[ \frac{|\gradient\depth(x_1,x_2)|^2}{\depth(x_1,x_2)} \leq \frac{2}{\sqrt{x_1^2+x_2^2}} ,\]
and therefore it belongs to $L^q(\domain_1)$ for all $q\in[1,2)$. The conclusion on $\domain_1$ then follows as in the previous example.
For the last part, an elementary computation shows that for all $(x_1,x_2)\in\domain_3$, we have
	\[ \frac{|\gradient\depth(x_1,x_2)|^2}{\depth(x_1,x_2)} \leq \frac{2}{|x_1|} \leq \ncste\frac{1}{\distance_p(x_1,x_2)} ,\]
where $\distance_p$ is the mean distance at the boundary $\partial\domain$ of order $p>1$, defined for all $x\in\domain$ by
	\[ ( \distance_p(x) )^{-1} = 2\bigg( \int_{\mathbb{P}_2}\frac{1}{(\rho_\nu(x))^p}\,\dif\sigma(\nu) \bigg)^\frac{1}{p} ,\]
where $\mathbb{P}_2$ denotes the $2$-dimensional projective plane endowed with its Haar measure $\dif\sigma$, and $\rho_\nu(x)$
is the least distance at $x$ to the boundary in the direction $\nu$~\cite{Balinsky}*{Chapter~3}.
One may then use the Hardy's inequality on general domains~~\cite{Balinsky}*{Theorem~3.3.2}
to obtain the conclusion on $\domain_3$.
\end{example}

\section{Maximization of the energy}

Let us consider an energy functional of the form
	\[ \energy : L^p(\domain,\mumeasure)\to\reals : \energy(\vortex) = \frac{1}{2}\int_\domain\vortex(\vortexoperator+\rectifycirculation)(\vortex)\mumeasure
		+ \int_\domain\flow\vortex\mumeasure .\]
The flow $\flow\in\largefunctionspace$ is a function function independent of the vortex $\vortex$.
The assumption $\flow\in\largefunctionspace$ is of physical importance, since it means that the associated velocity $\depth^{-1}\gradient\flow$
brings a finite contribution to the kinetic energy of the system.

\subsection{Rearrangement of functions}

Let $\vortex\in L^p(\domain,\mumeasure)$ be a given function. We recall that by a $\mumeasure$-rearrangement of $\vortex$, we mean
a function $\tilde{\vortex}\in L^p(\domain,\mumeasure)$ such that, for all $\lambda\in\reals$, we have
	\[ \mumeasure\big(\{\vortex\geq \lambda\}\big) = \mumeasure\big(\{\tilde{\vortex}\geq \lambda\}\big) .\]
This defines an equivalence relation on $L^p(\domain,\mumeasure)$. The set of all $\mumeasure$-rearrangements of a given function $\vortex\in L^p(\domain,\mumeasure)$
will be denoted by $\rearrangement(\vortex)$.
Observe that for $\vortex\in L^p(\domain,\mumeasure)$ and $\xi\in\rearrangement(\vortex)$, we have
$\positivepart{\xi}\in\rearrangement\big(\positivepart{\vortex}\big)$ and similarly $\negativepart{\xi}\in\rearrangement\big(\negativepart{\vortex}\big)$.
It then follows from the Cavalieri's principle (see also~\cite{Rakotoson})
that for all $\tilde{\vortex}\in\rearrangement(\vortex)$, we have
	\[ \bigg( \int_\domain|\xi|^q\mumeasure \bigg)^{\frac{1}{q}} = \bigg( \int_\domain|\vortex|^q\mumeasure \bigg)^{\frac{1}{q}} ,\]
for any $q\in[1,+\infty)$. In particular, for $p\in(1,+\infty)$, the set $\rearrangement(\vortex)$ is closed with respect to the strong topology of $L^p(\domain,\mumeasure)$,
and relatively compact with respect to the weak topology.

The following proposition provides a way to construct ``well behaving'' rearrangements.
The proof is standard in symmetrization theory, but we recall it for completeness.
\begin{proposition}\label{symmetrizearoundpoint}
For all $x\in\domain$, there exists a function $\symmetrizearoundpoint{x}{\cdot}:L^1(\domain,\mumeasure)\to L^1(\domain,\mumeasure)$
such that for all positive function $\vortex\in L^1(\domain,\mumeasure)$ we have
	\begin{enumerate}
		\item $\symmetrizearoundpoint{x}{\vortex}\in\rearrangement(\vortex)$;
		\item the superlevel sets of $\symmetrizearoundpoint{x}{\vortex}$ are balls in $\domain$ centered on $x$.
	\end{enumerate}
\end{proposition}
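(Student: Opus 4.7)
The plan is to prescribe the superlevel sets of $\symmetrizearoundpoint{x}{\vortex}$ directly: they must be balls in $\domain$ centered at $x$ with the same $\mumeasure$-measures as the corresponding superlevel sets of $\vortex$, and then $\symmetrizearoundpoint{x}{\vortex}$ is reconstructed by the layer-cake formula.

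First I introduce the function $\nu_x : [0,+\infty) \to [0,\mumeasure(\domain)]$ defined by $\nu_x(r) = \mumeasure(B(x,r) \cap \domain)$. This map is non-decreasing, and since $\mumeasure = \depth\,\lebesguemeasure$ is absolutely continuous with respect to Lebesgue measure with bounded density $\depth$, the function $\nu_x$ is continuous (no atoms). I then define its generalized inverse
\[ R_x(t) := \inf\big\{ r \geq 0 : \nu_x(r) \geq t \big\}, \quad t \in [0,\mumeasure(\domain)], \]
so that $\nu_x(R_x(t)) = t$ for every $t$ in the range of $\nu_x$.

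Next, given a positive function $\vortex \in L^1(\domain,\mumeasure)$, let $m_\vortex(\lambda) = \mumeasure(\{\vortex \geq \lambda\})$ denote its distribution function, which is non-increasing and right-continuous. I set
\[ \symmetrizearoundpoint{x}{\vortex}(y) := \sup\big\{ \lambda > 0 : y \in B(x,R_x(m_\vortex(\lambda))) \cap \domain \big\}, \]
with the convention $\sup\emptyset = 0$. By construction, for every $\lambda > 0$, the superlevel set $\{\symmetrizearoundpoint{x}{\vortex} \geq \lambda\}$ is exactly $B(x,R_x(m_\vortex(\lambda))) \cap \domain$, which gives property~(2) immediately.

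For property~(1), continuity of $\nu_x$ combined with the definition of $R_x$ yields $\nu_x(R_x(m_\vortex(\lambda))) = m_\vortex(\lambda)$, so that
\[ \mumeasure\big(\{\symmetrizearoundpoint{x}{\vortex} \geq \lambda\}\big) = m_\vortex(\lambda) = \mumeasure\big(\{\vortex \geq \lambda\}\big), \]
proving that $\symmetrizearoundpoint{x}{\vortex} \in \rearrangement(\vortex)$. The main (and really only) technical point is the continuity of $\nu_x$: it ensures that $\nu_x \circ R_x$ is the identity on the range of $\nu_x$; without it, jumps of $\nu_x$ would allow the distribution functions of $\symmetrizearoundpoint{x}{\vortex}$ and $\vortex$ to disagree on a countable set. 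Here this continuity is guaranteed by the absolute continuity of $\mumeasure$ with bounded density $\depth$, so the construction goes through for every $x \in \domain$ and every positive $\vortex \in L^1(\domain,\mumeasure)$.
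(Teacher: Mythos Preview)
Your proof is correct and follows essentially the same approach as the paper: define the radius $r_\lambda$ so that $\mu(B(x,r_\lambda)\cap\domain)=\mu(\{\vortex\geq\lambda\})$, then recover $\symmetrizearoundpoint{x}{\vortex}$ via the layer-cake supremum. Your explicit identification of the continuity of $\nu_x$ (from $\mumeasure\ll\lebesguemeasure$ with bounded density) as the reason $\nu_x\circ R_x=\mathrm{id}$ is a welcome clarification that the paper leaves implicit.

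Two small points. First, with the convention $m_\vortex(\lambda)=\mumeasure(\{\vortex\geq\lambda\})$ the distribution function is \emph{left}-continuous, not right-continuous as you wrote; fortunately your argument never actually uses right-continuity. Second, the set identity $\{\symmetrizearoundpoint{x}{\vortex}\geq\lambda\}=B(x,R_x(m_\vortex(\lambda)))\cap\domain$ is not literally exact: a careful computation gives $\bigcap_{s<\lambda}B(x,R_x(m_\vortex(s)))\cap\domain$, which is a ball (open or closed) of radius $\lim_{s\to\lambda^-}R_x(m_\vortex(s))$ rather than $R_x(m_\vortex(\lambda))$. The discrepancy is at most a sphere (measure zero) or an annulus on which $\nu_x$ is constant (hence also $\mumeasure$-null), so both properties (1) and (2) survive. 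The paper's proof glosses over the same subtlety.
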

\begin{proof}
For all $\lambda\in\reals$, we define
	\[ r_\lambda = \inf\Big\{ r\geq 0 : \mumeasure\big(\domain\cap \closure{B(x,r)}\big) \leq \mumeasure\big(\{\vortex\geq \lambda\}\big) \Big\} .\]
We have $r_{\lambda_1}\leq r_{\lambda_2}$ as soon as $\lambda_1\geq \lambda_2$.
We then define the function
	\[ \symmetrizearoundpoint{x}{\vortex} : \domain\to\reals^+ :
		\symmetrizearoundpoint{x}{\vortex}(y) = \sup\big\{ \lambda\in\reals : y\in \domain\cap\closure{B(x,r_\lambda)} \big\} .	\]
If then follows from the definition of $(\symmetrizearoundpoint{x}{\vortex})$ that for all $\lambda\in\reals$:
	\[ \{ \symmetrizearoundpoint{x}{\vortex} \geq \lambda \} = \domain \cap \closure{B\Big(x,\inf_{s<\lambda}r_s\Big)} = \domain\cap \closure{B(x,r_\lambda)} .\]
On the other hand, we have
	\[ \mumeasure\big( \domain\cap \closure{B(x,r_\lambda)} \big) = \mumeasure\big(\{\vortex\geq \lambda\}\big) , \]
and therefore $\symmetrizearoundpoint{x}{\vortex}$ is a $\mumeasure$-rearrangement of $\vortex$ whose superlevel sets are balls centered on $x$.
\end{proof}

\subsection{Convexity of the energy}

\begin{proposition}\label{energyconvexity}
The energy functional $\energy$ is strictly convex and for all $\vortex\in L^p(\domain,\mumeasure)$,
the function $(\vortexoperator+\rectifycirculation)(\vortex)+\flow$
belongs to the subgradient of $\energy$ at point $\vortex$.
\end{proposition}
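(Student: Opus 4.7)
The plan is to recognize $\energy$ as a strictly convex symmetric quadratic form in $\vortex$ plus an affine remainder, so that both the strict convexity and the subgradient identification follow by straightforward Hilbertian manipulations. The inputs are the symmetry in \cref{symmetryofoperators}, the positive semidefiniteness of the matrix $\mathcal{A}$ from \cref{existencecirculationproblem}, and the variational identity for $\vortexoperator$ from \cref{existencecompleteproblem}.

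First I would isolate the linear-in-$\vortex$ part of $\rectifycirculation$: by \cref{weakformulationdefinition}, $\rectifycirculation$ is affine in $\vortex$, splitting as $\rectifycirculation=\rectifycirculation_{\mathrm{lin}}+\rectifycirculation_{0}$ where $\rectifycirculation_{0}$ depends only on the prescribed data $\circulation_{0},\dots,\circulation_{\nbislands}$. Setting $T:=\vortexoperator+\rectifycirculation_{\mathrm{lin}}$, the form $B(\vortex_{1},\vortex_{2})=\int_{\domain}\vortex_{1}\,T\vortex_{2}\,\mumeasure$ is symmetric: the $\vortexoperator$ contribution rewrites via the variational identity as $\scalarproduct{\vortexoperator(\vortex_{1})}{\vortexoperator(\vortex_{2})}{\functionspace}$ (take $\phi=\vortexoperator(\vortex_{1})$ in the weak formulation), while the $\rectifycirculation_{\mathrm{lin}}$ contribution reduces to $(\alpha^{(1)})^{\dagger}\mathcal{A}\alpha^{(2)}$ in the static-flow coefficients, whence symmetry follows from the symmetry of $\mathcal{A}$.

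Next, each summand of $B(\vortex,\vortex)$ is non-negative: $\int_{\domain}\vortex\,\vortexoperator(\vortex)\,\mumeasure=\norm{\vortexoperator(\vortex)}{\functionspace}^{2}$ and $\int_{\domain}\vortex\,\rectifycirculation_{\mathrm{lin}}(\vortex)\,\mumeasure=\alpha^{\dagger}\mathcal{A}\alpha\geq 0$ by positive semidefiniteness of $\mathcal{A}$. Strict positivity is forced by the $\vortexoperator$-term alone: if $\vortexoperator(\vortex)=0$, then $\int_{\domain}\phi\,\vortex\,\mumeasure=0$ for every $\phi\in\functionspace$; since $\smoothfunctions(\domain)\subset\functionspace$ is dense in $L^{p'}(\domain,\mumeasure)$ (recall $\mumeasure(\domain)<+\infty$), we conclude $\vortex=0$. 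Hence $B$ is a genuine inner product on $L^{p}(\domain,\mumeasure)$; the affine remainder of $\energy$ does not affect convexity, so $\energy$ itself is strictly convex.

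Finally, for the subgradient assertion, I would set $\eta=\xi-\vortex$ and expand $\energy(\xi)-\energy(\vortex)$ directly, using the symmetry of $B$ to collapse the cross terms into the identity
\[ \energy(\xi)-\energy(\vortex)=\int_{\domain}\eta\,\bigl[(\vortexoperator+\rectifycirculation)(\vortex)+\flow\bigr]\,\mumeasure+\tfrac{1}{2}B(\eta,\eta), \]
and then drop the non-negative remainder to obtain the subgradient inequality. The delicate point I expect to meet is the bookkeeping around the constant part $\rectifycirculation_{0}$ when reassembling the cross-terms: the decomposition has to be arranged so that the linear functional identified with the subgradient reads exactly as $(\vortexoperator+\rectifycirculation)(\vortex)+\flow$, with the full $\rectifycirculation$, rather than leaving a residual affine drift from $\rectifycirculation_{0}$.
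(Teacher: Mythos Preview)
Your approach is essentially the same as the paper's: decompose $\energy$ into the strictly convex piece $\tfrac{1}{2}\norm{\vortexoperator(\vortex)}{\functionspace}^{2}$, the convex piece coming from $\rectifycirculation$ via the nonnegativity of $\alpha^{\dagger}\mathcal{A}\alpha$, and the linear remainder, then read off the subgradient from the quadratic expansion. You are somewhat more explicit than the paper on two points: you argue injectivity of $\vortexoperator$ (density of $\smoothfunctions(\domain)$ in $L^{p'}(\domain,\mumeasure)$) to justify \emph{strict} convexity, and you flag the affine splitting $\rectifycirculation=\rectifycirculation_{\mathrm{lin}}+\rectifycirculation_{0}$, whereas the paper's proof computes with $\rectifycirculation(\vortex_{1}-\vortex_{2})$ as though $\rectifycirculation$ were linear and simply invokes the nonnegative eigenvalues of $\mathcal{A}$ for the subgradient step.

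One caution on the bookkeeping you anticipated: your displayed identity $\energy(\xi)-\energy(\vortex)=\int_{\domain}\eta\,[(\vortexoperator+\rectifycirculation)(\vortex)+\flow]\,\mumeasure+\tfrac{1}{2}B(\eta,\eta)$ is off by $\tfrac{1}{2}\int_{\domain}\eta\,\rectifycirculation_{0}\,\mumeasure$ if $\rectifycirculation_{0}\neq 0$; the Gateaux derivative of $\vortex\mapsto\tfrac{1}{2}\int\vortex\,\rectifycirculation(\vortex)$ at $\vortex$ produces $\rectifycirculation_{\mathrm{lin}}(\vortex)+\tfrac{1}{2}\rectifycirculation_{0}$, not $\rectifycirculation(\vortex)$. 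This is exactly the residual drift you warned about, and the paper glosses over the same point. In the paper's actual use (the energies $\energy_{\epsilon}$ of \cref{sectionAsymptotic}) the circulations scale linearly with the vortex strength, so $\rectifycirculation$ is genuinely linear there and $\rectifycirculation_{0}=0$; moreover the subgradient assertion is only exploited on the rearrangement class, where $\int_{\domain}\eta\,\mumeasure=0$ and hence $\int_{\domain}\eta\,\rectifycirculation_{0}\,\mumeasure$ is irrelevant anyway. So the gap is harmless for the application, but worth noting.
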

\begin{proof}
The third contribution
	\[ \vortex\in L^p(\domain,\mumeasure)\mapsto \int_\domain\flow\vortex\mumeasure \]
is linear, hence convex. The first contribution
	\[ \vortex\in L^p(\domain,\mumeasure)\mapsto \frac{1}{2}\int_\domain\vortex\vortexoperator(\vortex)\mumeasure = \frac{1}{2}\norm{\vortexoperator(\vortex)}{\functionspace}^2 \]
is strictly convex. It remains to prove that the second contribution
	\[ \vortex\in L^p(\domain,\mumeasure)\mapsto \frac{1}{2}\int_\domain\vortex\rectifycirculation(\vortex)\mumeasure \]
is convex. Let $\vortex_1,\vortex_2\in L^p(\domain,\mumeasure)$ and $t\in[0,1]$. A computation shows that,
if $\vortex=\vortex_1-\vortex_2$, we have
	\[ \int_\domain\vortex\rectifycirculation(\vortex)\mumeasure = \sum^\nbislands_{i=0}\alpha_i\int_\domain\vortex\staticflow_i\mumeasure
		= (\alpha_0,\dots,\alpha_\nbislands)\mathcal{A}(\alpha_0,\dots,\alpha_\nbislands)^\dagger ,\]
where $\mathcal{A}$ is the matrix constructed in \cref{existencecirculationproblem}. According to
\cref{existencecirculationproblem}, the matrix $\mathcal{A}$ has non negative eigenvalue, and therefore the above quantity is non negative as well.
In particular, we have
	\[ \frac{1}{2}\int_\domain(t\vortex_1+(1-t)\vortex_2)\rectifycirculation(t\vortex_1+(1-t)\vortex_2)\mumeasure
		\leq t\frac{1}{2}\int_\domain\vortex_1\rectifycirculation(\vortex_1)\mumeasure + (1-t)\frac{1}{2}\int_\domain\vortex_2\rectifycirculation(\vortex_2)\mumeasure .\]
For the last claim, we have to prove that, for all $\tilde{\vortex}\in L^p(\domain,\mumeasure)$, we have
	\[ \energy(\tilde{\vortex}) - \energy(\vortex) \geq \int_\domain (\tilde{\vortex}-\vortex)\big( \vortexoperator(\vortex)+\rectifycirculation(\vortex)+\flow\big)\mumeasure .\]
By linearity, it is sufficient to check that
	\[ \int_\domain\tilde{\vortex}(\vortexoperator+\rectifycirculation)(\tilde{\vortex})\mumeasure - \int_\domain\vortex(\vortexoperator+\rectifycirculation)(\vortex)\mumeasure
		\geq 2\int_\domain (\tilde{\vortex}-\vortex)(\vortexoperator+\rectifycirculation)(\vortex)\mumeasure .\]
Here we have used the fact that the eigenvalues of $\mathcal{A}$ are non negative.
\end{proof}

\subsection{Energy maximization and  pressure field}

In this section we show that $\energy$ admits a maximizer over $\rearrangement(\vortex)$,
and that such maximizer leads to solution of the steady lake equations.
\begin{proposition}\label{pressurefield}
Let $\vortex\in L^p(\domain,\mumeasure)$, $p>1$. The restriction of the energy $\energy$ on $\rearrangement(\vortex)$
admits at least one maximizer $\tilde{\vortex}$.
If $\tilde{\vortex}u\in L^1_{\text{loc}}(\domain,\plane)$ is defined by
	\[ u=\depth^{-1}\flipgradient\Big(\vortexoperator(\tilde{\vortex})+\rectifycirculation(\tilde{\vortex})+\flow\Big) ,\]
then for all $\phi\in\smoothfunctions(\domain)$, we have
	\[ \int_\domain\scalarproduct{\tilde{\vortex}u}{\gradient\phi}{\plane}\ \mumeasure = 0 .\]
\end{proposition}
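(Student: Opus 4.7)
The plan is to combine Burton's variational framework for rearrangement classes with the classical Arnold--Benjamin flow argument to extract the steady transport equation.

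\textbf{Existence of a maximizer.} I would first work on the weak closure $\weakclosure{\rearrangement(\vortex)}$. Since every element of $\rearrangement(\vortex)$ has the same $L^p(\domain,\mumeasure)$-norm as $\vortex$, the set $\rearrangement(\vortex)$ is bounded and its weak closure is weakly compact and convex in the reflexive space $L^p(\domain,\mumeasure)$. The functional $\energy$ is weakly continuous on this closure: by \cref{weakstrongcontinuity}, the operators $\vortexoperator$ and $\rectifycirculation$ send weakly convergent sequences to strongly convergent ones, which lets us pass to the limit in the bilinear term, while $\int_\domain\flow\vortex\,\mumeasure$ is weakly continuous because $\flow$ pairs against $L^p$ via a fixed $L^{p'}$-function. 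Hence $\energy$ attains its supremum on $\weakclosure{\rearrangement(\vortex)}$. I would then invoke Burton's theorem characterizing the extreme points of $\weakclosure{\rearrangement(\vortex)}$ as exactly $\rearrangement(\vortex)$; combined with the strict convexity of $\energy$ from \cref{energyconvexity}, this forces every maximizer to be an extreme point, and thus to lie in $\rearrangement(\vortex)$ itself.

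\textbf{Euler--Lagrange identity via a measure-preserving flow.} Set $\psi:=\vortexoperator(\tilde{\vortex})+\rectifycirculation(\tilde{\vortex})+\flow$ and fix $\phi\in\smoothfunctions(\domain)$. I would introduce the vector field $X:=\depth^{-1}\flipgradient\phi$, which is compactly supported in $\domain$ and satisfies $\divergence(\depth X)=\divergence\flipgradient\phi=0$. On the support of $\phi$ the depth $\depth$ is bounded below by a positive constant, so $X$ is Lipschitz and its flow $\sigma_t$ is a $C^1$-diffeomorphism of $\domain$ for all $t\in\reals$ which preserves $\mumeasure=\depth\,\lebesguemeasure$. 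Consequently $\tilde{\vortex}\circ\sigma_t^{-1}\in\rearrangement(\vortex)$ for every $t$. Combining the maximality of $\tilde{\vortex}$ with the subgradient inequality from \cref{energyconvexity} gives
\[ 0\;\geq\;\energy(\tilde{\vortex}\circ\sigma_t^{-1})-\energy(\tilde{\vortex})\;\geq\;\int_\domain(\tilde{\vortex}\circ\sigma_t^{-1}-\tilde{\vortex})\,\psi\,\mumeasure, \]
and the measure-preserving change of variables $x=\sigma_t(y)$ rewrites the right-hand side as $\int_\domain\tilde{\vortex}(\psi\circ\sigma_t-\psi)\,\mumeasure$. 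Dividing by $t$ and letting $t\to 0^+$ and $t\to 0^-$ squeezes the one-sided limits to zero, so
\[ \int_\domain\tilde{\vortex}\,(X\cdot\gradient\psi)\,\mumeasure\;=\;0. \]
Finally, the algebraic identity $\flipgradient\phi\cdot\gradient\psi=-\flipgradient\psi\cdot\gradient\phi$ converts this into $X\cdot\gradient\psi=-u\cdot\gradient\phi$, and the equality $\mumeasure=\depth\,\lebesguemeasure$ yields the announced identity $\int_\domain\tilde{\vortex}u\cdot\gradient\phi\,\mumeasure=0$.

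\textbf{Main obstacle.} The delicate technical step is justifying the passage to the limit in $\frac{1}{t}(\psi\circ\sigma_t-\psi)$ when paired with $\tilde{\vortex}\in L^p(\domain,\mumeasure)$. Although $\psi$ only lives in the weighted space $\largefunctionspace$, the diffeomorphism $\sigma_t$ differs from the identity only on the support of $\phi$, where $\depth$ is bounded below and hence $\psi\in W^{1,2}$ with standard Sobolev regularity. Classical difference-quotient estimates then give $\frac{1}{t}(\psi\circ\sigma_t-\psi)\to X\cdot\gradient\psi$ in $L^2$ on that compact set, and Hölder's inequality against $\tilde{\vortex}$ closes the argument; this local reduction is exactly what allows the rough regularity of $\psi$ near the shores to be irrelevant to the pressure identity.
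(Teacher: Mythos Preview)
Your existence argument via weak compactness, strict convexity, and Burton's extreme-point characterization is essentially the content of Burton's Theorem~A cited by the paper, so that part matches.

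For the transport identity, however, you take a genuinely different route from the paper, and your route has a gap. The paper does \emph{not} vary $\tilde{\vortex}$ along a measure-preserving flow; instead it uses the second conclusion of Burton's Theorem~A, namely that there is a nondecreasing $G:\reals\to\reals$ with $\tilde{\vortex}=G(\psi)$. From this pointwise relation the identity is almost algebraic: writing $F_{n,k}$ for a Lipschitz primitive of a truncation of $G$ and $[\psi]_k$ for a truncation of $\psi$, one has $\nabla^\perp(F_{n,k}\circ[\psi]_k)=([G]_n\circ[\psi]_k)\nabla^\perp[\psi]_k$, so $\int_\domain ([G]_n\circ[\psi]_k)\,\nabla^\perp[\psi]_k\cdot\nabla\phi\,\lebesguemeasure=0$ by the divergence theorem; then the hypothesis $\tilde{\vortex}u\in L^1_{\mathrm{loc}}$ furnishes exactly the dominating function needed to let $n,k\to\infty$.

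The gap in your approach is the passage to the limit in $\frac{1}{t}\int_\domain\tilde{\vortex}(\psi\circ\sigma_t-\psi)\,\mumeasure$. You claim $L^2$ convergence of the difference quotient on $\operatorname{supp}\phi$ and then ``H\"older against $\tilde{\vortex}$'', but H\"older pairs $L^2$ only with $L^2$, so this closes only when $p\geq 2$. For $1<p<2$ you would need convergence of the difference quotient in $L^{p'}$ with $p'>2$, which requires $\nabla\psi\in L^{p'}_{\mathrm{loc}}$; nothing in the standing hypotheses gives this (in particular $\flow\in\largefunctionspace$ only yields $\nabla\flow\in L^2_{\mathrm{loc}}$). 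Note also that you never actually invoke the hypothesis $\tilde{\vortex}u\in L^1_{\mathrm{loc}}$: it guarantees the target integral makes sense but does not by itself dominate the difference quotients along the flow. The structural relation $\tilde{\vortex}=G(\psi)$ is precisely what lets the paper avoid this integrability obstacle, and it is the ingredient you should bring in.
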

The proof strongly relies on an existence theorem due to Burton~\cite{BurtonRearrangementOfFunctions}*{Theorem~A},
which has been proved in the case of sign changing vortices by an adaptation of the bathtub principle~\cite{LiebLoss}.
\begin{proof}
By~\cite{BurtonRearrangementOfFunctions}*{Theorem~A}, there exists a maximizer $\tilde{\vortex}\in\rearrangement(\vortex)$ of $\energy$ over $\rearrangement(\tilde{\vortex})$,
and a non decreasing function $G:\reals\to\reals$ such that
	\[ \tilde{\vortex} = G\Big( \vortexoperator(\tilde{\vortex})+\rectifycirculation(\tilde{\vortex})+\flow \Big) .\]
Let us write for short $\psi=\vortexoperator(\tilde{\vortex})+\rectifycirculation(\tilde{\vortex})+\flow$, and assume that
	\[ \tilde{\vortex}\,\depth^{-1}\flipgradient\psi \in L^1_{\text{loc}}(\domain,\plane) .\]
We define, for all $n\in\integers$:
	\[ \truncation{G}_n : \reals\to\reals : \truncation{G}_n(t) = \min\big\{ n, \max\big\{ G(t), -n\big\} \big\} . \]
For all $n\in\integers$, the function $\truncation{G}_n$ is non decreasing and bounded.
We define similarly the truncation at $k\in\integers$ of the function $\psi$, so that:
	\[ \truncation{\psi}_k\in \largefunctionspace,\qquad\gradient\truncation{\psi}_k= \indicator{\{|\psi|\leq k\}}\gradient\psi .\]
The function
	\[ F_{n,k} : [-n,+\infty)\to\reals : \int_{-k}^t\truncation{G}_n(s)\dif s \]
is Lipschitz continuous.
The function $F_{n,k}\circ \truncation{\psi}_k$ then belongs to $W^{1,2}(\domain)$ with
	\[ \gradient (F_{n,k}\circ \truncation{\psi}_k) = (\truncation{G}_n\circ\truncation{\psi}_k)\ \gradient\truncation{\psi}_k .\]
As a result, it follows from the divergence theorem that for all $\phi\in\smoothfunctions(\domain)$, we have
	\[ \int_\domain\scalarproduct{\flipgradient\truncation{\psi}_k}{\gradient\phi}{\plane}\ (\truncation{G}_n\circ\truncation{\psi}_k)\lebesguemeasure = 0 .\]
For all $n\in\integers$ we have
	\[ \Big| \scalarproduct{\flipgradient\truncation{\psi}_k}{\gradient\phi}{\plane}\ (\truncation{G}_n\circ\truncation{\psi}_k) \Big|
		\leq \Big| \scalarproduct{\flipgradient\psi}{\gradient\phi}{\plane}\ \tilde{\vortex} \Big| ,\]
and the latter belongs to $L^1(\domain,\mu)$ by Hölder's inequality. On the other hand, we have almost-everywhere on $\domain$:
	\[ \lim_{k\to +\infty}\scalarproduct{\flipgradient\truncation{\psi}_k(x)}{\gradient\phi(x)}{\plane}\ \truncation{G}_n(\truncation{\psi}_k(x))
		= \scalarproduct{\flipgradient\psi(x)}{\gradient\phi(x)}{\plane}\ \truncation{G}_n(\psi(x)) ,\]
because $\gradient\psi=0$ almost-everywhere on a set of the form $\{\psi=\alpha\}$.
Since $G$ is monotone, the set of discontinuities of $G$ is at most countable, hence we miss at most a countable union of negligible sets in $\domain$.
According to the Lebesgue's dominated convergence theorem,
we obtain
	\[ \int_\domain\scalarproduct{ \tilde{\vortex}\flipgradient\psi}{\gradient\phi}{\plane}\ \lebesguemeasure = 0 , \]
which is equivalent to the conclusion.
\end{proof}

\section{Asymptotic behavior of maximizers}\label{sectionAsymptotic}

In this section, we turn our attention to the asymptotic behavior of a maximizing family
$\big\{\vortex_\epsilon\in L^p(\domain,\mumeasure) : \epsilon>0\big\}$ with related energies
	\[ E_\epsilon(\vortex) = \frac{1}{2}\int_\domain\vortex\vortexoperator(\vortex)+\vortex\rectifycirculation_\epsilon(\vortex)\mumeasure
		+ \int_\domain\externalflowcoefficient_{\epsilon}\flow\vortex\mumeasure .\]
We assume that the vortex profiles obey the identities
	\[ \tau\vortexstrength_\epsilon = \int_\domain\positivepart{\vortex_\epsilon}\mumeasure ,
		\qquad  (1-\tau)\vortexstrength_\epsilon = \int_\domain\negativepart{\vortex_\epsilon}\mumeasure  \]
for some $\tau\in[0,1]$ and $\vortexstrength_\epsilon>0$.
We make the following ($\mumeasure$-rearrangement invariant) extra assumption:
	\[ \sup_{\epsilon>0}\Bigg\{
		\frac{\norm{\positivepart{\vortex_\epsilon}}{L^p_{\mumeasure}}\epsilon^{2(1-\frac{1}{p})}}{\tau\vortexstrength_\epsilon}
		+ \frac{\norm{\negativepart{\vortex_\epsilon}}{L^p_{\mumeasure}}\epsilon^{2(1-\frac{1}{p})}}{(1-\tau)\vortexstrength_\epsilon}
	\Bigg\} < +\infty .\]
Here above, the convention $\displaystyle\frac{0}{0}=0$ is used. This extra condition is motivated by
the fact that any family that satisfies constraint~\eqref{distributionconstraint} (page~\pageref{distributionconstraint}) as in the introduction
also satisfies the above control condition.

More precisely, if we are given a distribution function
	\[ D : \reals^+ \to [0,\mumeasure(\domain)] \]
normalized as
	\[ \int_{\reals^+}D(t)\dif t = 1 ,\]
and such that there exists $p>1$ with
	\[ \int_{\reals^+}t^pD(t)\dif t < +\infty ,\]
and if we define, for all $\epsilon>0$, a reference profile $\tilde{\vortex}_\epsilon$ such that
	\[ \mumeasure\big( \{\positivepart{\tilde{\vortex}_\epsilon}\geq\lambda \} \big) = \frac{\epsilon^2}{\delta}D\bigg( \frac{\epsilon^2\lambda}{\delta\tau}\,\log\frac{1}{\epsilon} \bigg),
		\qquad
		\mumeasure\big( \{\negativepart{\tilde{\vortex}_\epsilon}\geq\lambda \} \big) = \frac{\epsilon^2}{\delta}D\bigg( \frac{\epsilon^2\lambda}{\delta(1-\tau)}\,\log\frac{1}{\epsilon} \bigg),
	\]
where $\delta = \sup\limits_{\lambda>0}D(\lambda)$, then using the axiom of choice a family of maximizers $\{\vortex_\epsilon:\epsilon>0\}$
as above always exists. In other words, the condition we impose on the $L^p(\domain,\mumeasure)$-norms is satisfied by every family constructed
by some scaling process.

The circulations $\circulation_{\epsilon;0},\dots,\circulation_{\epsilon;\nbislands}\in\reals$
that come into play in the definition of operator $\rectifycirculation_\epsilon$ are assumed to depend on $\epsilon>0$ through
	\[ \circulation_{\epsilon;i}=\circulation_i(2\tau-1)\vortexstrength_\epsilon \]
for $\circulation_0,\dots,\circulation_\nbislands\in\reals$ independent of $\epsilon>0$, and
	\[ \sum^{\nbislands}_{i=0}\circulation_i = 1 .\]
We also assume that we have $\externalflowcoefficient_\epsilon\geq 0$ with
	\[ \lim_{\epsilon\to 0}\frac{\externalflowcoefficient_{\epsilon}}{\vortexstrength_\epsilon\log\frac{1}{\epsilon}} = 1 .\]

On the lake $(\domain,\depth)$ we make the assumption that $(\domain,\depth)$ is continuous, in the sense of \cref{continuouslakes}.
We also make the following assumption that the external flow $\flow$
is continuous on $\closure{\domain}$, and sufficiently small so that:
	\begin{enumerate}
		\item there exists $\threshold>0$ such that, for all sufficiently small $\epsilon>0$, if $\tau>0$:
			\[ \sup_\domain\bigg\{
				\frac{4\pi\externalflowcoefficient_{\epsilon}|\flow|}{(\sup_\domain\depth)\vortexstrength_\epsilon\log\frac{1}{\epsilon}}
			\bigg\} \leq \frac{\tau}{2}-\threshold; \]
		and if $\tau<1$:
			\[ \sup_\domain\bigg\{
				\frac{4\pi\externalflowcoefficient_{\epsilon}|\flow|}{(\sup_\domain\depth)\vortexstrength_\epsilon\log\frac{1}{\epsilon}}
			\bigg\} \leq \frac{(1-\tau)}{2}-\threshold .\]
	\end{enumerate}
If $\tau>0$ (resp.~$\tau<1$), then the above conditions ensure that the function
	\[ \phi : \closure{\domain}\to\reals : \phi(x) = \frac{\tau\depth(x)}{4\pi}+\frac{\externalflowcoefficient_{\epsilon}}{\vortexstrength_\epsilon\log\frac{1}{\epsilon}}\flow(x) \]
(resp.~$\depth/(4\pi)-\phi$) never reaches a maximal value on a shore, that is: on some point $x_\star$ where $\depth(x_\star)=0$.
This qualitative information will be used several times, but we will also require the quantitative estimate
to show that the positive and the negative parts of the vortex remain concentrated.

Finally, we write $\dif\vortex(x)=\frac{\vortex}{\norm{\vortex}{L^1_{\mumeasure}}}\mumeasure$ for all non null function $\vortex\in L^p(\domain,\mumeasure)$,
and $\dif 0=0$.

\subsection{Leading partial flows}

The leading partial flows induced by some $\vortex\in L^p(\domain, \mumeasure)$ are defined
for all $x\in\domain$ by
	\[ \positivefirstorderflow{\vortex}(x) = \frac{\depth(x)}{4\pi}\int_\domain\log\frac{\diameter(\domain)}{|x-y|}\ \positivepart{\vortex}(y)\mumeasure(y)
		+ \externalflowcoefficient_{\epsilon}\flow(x) , \]
and
	\[ \negativefirstorderflow{\vortex}(x) = \frac{\depth(x)}{4\pi}\int_\domain\log\frac{\diameter(\domain)}{|x-y|}\ \negativepart{\vortex}(y)\mumeasure(y)
		- \externalflowcoefficient_{\epsilon}\flow(x) . \]
\begin{proposition}\label{energyupperboundbyfows}
Let
	\[ \greenlaplace(x,y) = \frac{1}{2\pi}\log\frac{\diameter(\domain)}{|x-y|} - \regulargreenlaplace(x,y) \]
be the Green's function for the Laplace's operator $\laplacian$ on $\domain$ with Dirichlet boundary conditions,
and let $\rectifykernel$ be given by \cref{reduciblelake}. Then we have for all $\vortex\in L^p(\domain,\mumeasure)$ and for all $\epsilon>0$:
	\begin{multline*}
		\energy_\epsilon(\vortex) = \tau\vortexstrength_\epsilon\int_\domain\positivefirstorderflow{\vortex}\dif\positivepart{\vortex}
			+ (1-\tau)\vortexstrength_\epsilon\int_\domain\negativefirstorderflow{\vortex}\dif\negativepart{\vortex}
			\\ + \frac{1}{2}\iint_{\domain\times\domain} \big(\rectifykernel(x,y)-b(x)\regulargreenlaplace(x,y)\big)\,\vortex(x)\vortex(y)\mumeasure\otimes\mumeasure(x,y)
			\\ - \iint_{\domain\times\domain}\frac{\depth(x)+\depth(y)}{4\pi}\log\frac{\diameter(\domain)}{|x-y|}
				\ \positivepart{\vortex}(x)\negativepart{\vortex}(y)\mumeasure\otimes\mumeasure(x,y) .
	\end{multline*}
In particular, there exists a constant $C>0$ such that
	\[ \energy_\epsilon(\vortex) \leq \tau\vortexstrength_\epsilon\int_\domain\positivefirstorderflow{\vortex}\dif\positivepart{\vortex}
		+ (1-\tau)\vortexstrength_\epsilon\int_\domain\negativefirstorderflow{\vortex}\dif\negativepart{\vortex}
		+ C\vortexstrength_\epsilon^2 .\]
\end{proposition}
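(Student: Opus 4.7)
The plan is a direct algebraic expansion of $\energy_\epsilon(\vortex)$ using the kernel representation from \cref{reduciblelake},
\[ \vortexoperator\vortex(x)+\rectifycirculation\vortex(x) = \depth(x)\int_\domain \greenlaplace(x,y)\vortex(y)\mumeasure(y) + \int_\domain \rectifykernel(x,y)\vortex(y)\mumeasure(y), \]
combined with the splitting $\greenlaplace(x,y)=\frac{1}{2\pi}\log\frac{\diameter(\domain)}{|x-y|}-\regulargreenlaplace(x,y)$. Substituting both identities into the definition of $\energy_\epsilon(\vortex)$ produces three pieces: a logarithmic double integral with kernel $\frac{\depth(x)}{4\pi}\log\frac{\diameter(\domain)}{|x-y|}$, a ``regular'' double integral with kernel $\frac{1}{2}\bigl(\rectifykernel(x,y)-\depth(x)\regulargreenlaplace(x,y)\bigr)$, and the linear term $\externalflowcoefficient_\epsilon\int_\domain\flow\vortex\mumeasure$. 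Fubini is justified since $\vortex\in L^p(\domain,\mumeasure)$ with $p>1$.

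Next I split $\vortex=\positivepart{\vortex}-\negativepart{\vortex}$ and expand the tensor square $\vortex(x)\vortex(y)$ inside the logarithmic integral into like-sign and unlike-sign contributions. Comparing with the definition of $\positivefirstorderflow{\vortex}$ yields
\[ \frac{1}{4\pi}\iint_{\domain\times\domain}\depth(x)\log\frac{\diameter(\domain)}{|x-y|}\positivepart{\vortex}(x)\positivepart{\vortex}(y)\,\mumeasure\otimes\mumeasure(x,y) = \tau\vortexstrength_\epsilon\int_\domain \positivefirstorderflow{\vortex}\dif\positivepart{\vortex}-\externalflowcoefficient_\epsilon\int_\domain\flow\,\positivepart{\vortex}\mumeasure, \]
and symmetrically for $\negativepart{\vortex}$, so that summing the two identities absorbs the linear external-flow contribution $\externalflowcoefficient_\epsilon\int_\domain\flow\vortex\mumeasure$ exactly into the two leading partial flow integrals. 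The two unlike-sign terms, made equal by a Fubini swap $x\leftrightarrow y$ in one of them, combine into the symmetric cross integral with kernel $-\frac{\depth(x)+\depth(y)}{4\pi}\log\frac{\diameter(\domain)}{|x-y|}$, reproducing exactly the last term of the stated identity.

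For the inequality, two observations suffice. Since $|x-y|\leq\diameter(\domain)$ on $\domain\times\domain$, the factor $\log\frac{\diameter(\domain)}{|x-y|}$ is non-negative, and combined with the non-negativity of $\depth,\positivepart{\vortex},\negativepart{\vortex}$ the cross term is non-positive and can be discarded. It then remains to dominate $\frac{1}{2}\iint(\rectifykernel-\depth\regulargreenlaplace)\vortex\otimes\vortex\,\mumeasure\otimes\mumeasure$ by $C\vortexstrength_\epsilon^2$; this uses the boundedness of $\rectifykernel$ asserted in \cref{reduciblelake}, suitable integrability of $\depth\regulargreenlaplace$ (which is pointwise bounded when $\depth$ vanishes on $\partial\domain$, and is otherwise controlled by classical estimates on the regular part of the Green's function), together with $\norm{\vortex}{L^1_\mumeasure}\leq\norm{\positivepart{\vortex}}{L^1_\mumeasure}+\norm{\negativepart{\vortex}}{L^1_\mumeasure}=\vortexstrength_\epsilon$. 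The whole argument is essentially bookkeeping; the one subtle step is this last $L^\infty$-type bound on the regular kernel $\rectifykernel-\depth\regulargreenlaplace$.
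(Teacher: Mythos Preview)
Your derivation of the identity is correct and matches the paper's (omitted) approach: expand $(\vortexoperator+\rectifycirculation)\vortex$ via \cref{reduciblelake}, insert the decomposition $\greenlaplace=\frac{1}{2\pi}\log\frac{\diameter(\domain)}{|\cdot|}-\regulargreenlaplace$, split $\vortex=\positivepart{\vortex}-\negativepart{\vortex}$, and regroup so that the external-flow term is absorbed into $\positivefirstorderflow{\vortex},\negativefirstorderflow{\vortex}$.

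There is, however, a genuine gap in your argument for the inequality. You discard the cross term (correctly, it is $\le 0$) and then try to bound $\tfrac{1}{2}\iint(\rectifykernel-\depth\regulargreenlaplace)\vortex\otimes\vortex$ by $C\vortexstrength_\epsilon^2$ via an $L^\infty$ bound on the kernel. This fails because $\regulargreenlaplace$ is \emph{not} bounded on $\domain\times\domain$: for fixed $x$ in the interior, $\regulargreenlaplace(x,y)\to+\infty$ as $y\to\partial\domain$, and the factor $\depth(x)$ does nothing about this (nor does the weight $\depth(y)$ in $\mumeasure(y)$ when $\inf_\domain\depth>0$). Since the proposition must hold for every $\vortex$ in the rearrangement class with no control on its support, the cross-sign contribution $\iint\depth(x)\regulargreenlaplace(x,y)\positivepart{\vortex}(x)\negativepart{\vortex}(y)\mumeasure\otimes\mumeasure$ cannot be dominated by $C\vortexstrength_\epsilon^2$ on its own.

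The missing idea is precisely what the paper hints at with ``positivity'': one should not treat the $\regulargreenlaplace$-term and the logarithmic cross term separately. Since $\greenlaplace\ge 0$, one has $0\le\regulargreenlaplace(x,y)\le\frac{1}{2\pi}\log\frac{\diameter(\domain)}{|x-y|}$. Using this on the unlike-sign part of $-\tfrac12\iint\depth(x)\regulargreenlaplace\,\vortex\otimes\vortex$ (after symmetrizing in $x,y$) gives exactly
\[
-\tfrac{1}{2}\iint\depth(x)\regulargreenlaplace\,\vortex\otimes\vortex
\;\le\; \iint\frac{\depth(x)+\depth(y)}{4\pi}\log\frac{\diameter(\domain)}{|x-y|}\,\positivepart{\vortex}(x)\negativepart{\vortex}(y)\,\mumeasure\otimes\mumeasure,
\]
which cancels the cross term rather than merely discarding it. Equivalently, one can bypass $\regulargreenlaplace$ altogether: in $\tfrac12\iint\depth(x)\greenlaplace(x,y)\vortex(x)\vortex(y)$, drop the cross terms using $\greenlaplace\ge 0$, then use $\greenlaplace\le\frac{1}{2\pi}\log\frac{\diameter(\domain)}{|x-y|}$ on the like-sign terms. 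What remains is $\tfrac12\iint\rectifykernel\,\vortex\otimes\vortex$, which is indeed bounded by $\tfrac12\norm{\rectifykernel}{L^\infty}\vortexstrength_\epsilon^2$.
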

The proof is straightforward from the fact that $(\domain,\depth)$ is a continuous lake, by using \cref{reduciblelake}.
and the properties of $\regulargreenlaplace$, $\rectifykernel$, and the positivity of $\vortexoperator$. We omit it.
\begin{proposition}\label{firstorderflowupperbound}
There exists a constant $C>0$ such that for all $\epsilon>0$ and for all $\vortex\in\rearrangement(\vortex_\epsilon)$:
	\[ \positivefirstorderflow{\vortex}(x) \leq \frac{\tau\depth(x)}{4\pi}\log\frac{1}{\epsilon}\vortexstrength_\epsilon + \externalflowcoefficient_{\epsilon}\flow(x)
		+ C\tau\vortexstrength_\epsilon \]
and
	\[ \negativefirstorderflow{\vortex}(x) \leq \frac{(1-\tau)\depth(x)}{4\pi}\log\frac{1}{\epsilon}\vortexstrength_\epsilon - \externalflowcoefficient_{\epsilon}\flow(x)
		+ C(1-\tau)\vortexstrength_\epsilon .\]
\end{proposition}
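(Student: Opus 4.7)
The plan is to establish only the first inequality; the second follows by the same argument applied to $\negativepart{\vortex}$, using the symmetric $L^p_\mumeasure$-hypothesis. Since $\vortex\in\rearrangement(\vortex_\epsilon)$ implies $\positivepart{\vortex}\in\rearrangement(\positivepart{\vortex_\epsilon})$, I inherit the three rearrangement invariants $\int_\domain\positivepart{\vortex}\mumeasure=\tau\vortexstrength_\epsilon$, $\mumeasure(\{\positivepart{\vortex}>0\})\leq\epsilon^2$, and $\norm{\positivepart{\vortex}}{L^p_{\mumeasure}}\leq C\tau\vortexstrength_\epsilon\,\epsilon^{-2(1-1/p)}$. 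Because $\depth\in C(\closure{\domain})$ is bounded, the statement reduces, after absorbing the factor $\depth(x)/(4\pi)$ and the additive $\externalflowcoefficient_\epsilon\flow(x)$, to the uniform-in-$x$ estimate
\[ I(x):=\int_\domain\log\frac{\diameter(\domain)}{|x-y|}\positivepart{\vortex}(y)\mumeasure(y) \;\leq\; \tau\vortexstrength_\epsilon\log\frac{1}{\epsilon}+C\tau\vortexstrength_\epsilon. \]

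The key is to recover the coefficient $\tau\vortexstrength_\epsilon$ in front of $\log(1/\epsilon)$ \emph{exactly}. I would split $\domain=B(x,\epsilon)\cup(\domain\setminus B(x,\epsilon))$ and, on the inner ball, use the representation $\log\frac{\diameter(\domain)}{|x-y|}=\int_{|x-y|}^{\diameter(\domain)}\dif r/r$ together with Fubini to write
\[
\int_{B(x,\epsilon)}\log\frac{\diameter(\domain)}{|x-y|}\positivepart{\vortex}\mumeasure \;=\; \int_0^\epsilon\frac{\dif r}{r}\int_{B(x,r)}\positivepart{\vortex}\mumeasure \;+\; \log\frac{\diameter(\domain)}{\epsilon}\int_{B(x,\epsilon)}\positivepart{\vortex}\mumeasure.
\]
Combined with the trivial outer bound $\log\frac{\diameter(\domain)}{\epsilon}\int_{\domain\setminus B(x,\epsilon)}\positivepart{\vortex}\mumeasure$, the two $\log(\diameter(\domain)/\epsilon)$-contributions collapse into the full-mass term $\tau\vortexstrength_\epsilon\log(\diameter(\domain)/\epsilon)=\tau\vortexstrength_\epsilon\log(1/\epsilon)+O(\tau\vortexstrength_\epsilon)$, which has exactly the desired leading coefficient.

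The remaining near-field piece $\int_0^\epsilon(\dif r/r)\int_{B(x,r)}\positivepart{\vortex}\mumeasure$ is precisely what the $L^p_\mumeasure$-hypothesis is tailored for: Hölder together with $\mumeasure(B(x,r))\leq(\sup_\domain\depth)\pi r^2$ gives $\int_{B(x,r)}\positivepart{\vortex}\mumeasure\leq C\tau\vortexstrength_\epsilon(r/\epsilon)^{2/p'}$, and since $2/p'>0$, integration against $\dif r/r$ on $(0,\epsilon)$ produces only a constant multiple of $\tau\vortexstrength_\epsilon$. The only real difficulty is conceptual: applying Hölder directly on all of $B(x,\epsilon)$ gains a spurious $\log(1/\epsilon)$ factor and would yield $I(x)=O(\tau\vortexstrength_\epsilon\log(1/\epsilon))$ with the wrong constant. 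Splitting at the scale $\epsilon$—which matches the $\mumeasure$-size of the support—and reorganising via Fubini is what ensures that the full mass $\tau\vortexstrength_\epsilon$, rather than an $L^p$-derived overestimate, multiplies the logarithmic factor, as Richardson's asymptotic requires.
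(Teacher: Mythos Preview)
Your argument is correct and follows essentially the same strategy as the paper: split at the scale $\epsilon$ so that the full mass $\tau\vortexstrength_\epsilon$ multiplies $\log(\diameter(\domain)/\epsilon)$ exactly, then control the near-field remainder via H\"older and the $L^p_\mumeasure$-bound. The only organisational difference is that the paper writes $\log\frac{\diameter(\domain)}{|x-y|}=\log\frac{\diameter(\domain)}{\epsilon}+\log\frac{\epsilon}{|x-y|}$, replaces the second term by its positive part (supported in $B(x,\epsilon)$), and applies H\"older once to compute $\big\|(\log\frac{\epsilon}{|x-\cdot|})_+\big\|_{L^{p'}}=C\,\epsilon^{2/p'}$ directly by scaling, rather than going through your Fubini/layer-cake representation; this is a slightly shorter route to the same near-field bound.
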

\begin{proof}
In both cases, it is sufficient to control the quantity
	\[ \int_\domain\log\frac{\diameter(\domain)}{|x-y|}\ \positivepart{\vortex_\epsilon}\mumeasure \]
by a suitable upper bound. Define
	\[ M = \sup_{\epsilon>0}\Bigg\{
		\frac{\norm{\positivepart{\vortex_\epsilon}}{L^p_{\mumeasure}}\epsilon^{2(1-\frac{1}{p})}}{\tau\vortexstrength_\epsilon}
		+ \frac{\norm{\negativepart{\vortex_\epsilon}}{L^p_{\mumeasure}}\epsilon^{2(1-\frac{1}{p})}}{(1-\tau)\vortexstrength_\epsilon}
	\Bigg\} < +\infty ,\]
so that
	\[ \sup_{\epsilon>0}\Bigg\{
		\frac{\norm{\positivepart{\vortex_\epsilon}}{L^p_{\mumeasure}}\epsilon^{2(1-\frac{1}{p})}}{\vortexstrength_\epsilon}
	\Bigg\} \leq \tau M < +\infty .\]
In the above condition, one may replace $\vortex_\epsilon$ by any of its $\mumeasure$-rearrangement,
because $\mumeasure$-rearrangements preserve every $L^q(\domain,\mumeasure)$-norms, $q\in[1,+\infty]$.
From this we compute, for all $\vortex\in\rearrangement(\vortex_\epsilon)$:
	\begin{align*}
		\int_\domain\log\frac{\epsilon}{|x-y|}\ \positivepart{\vortex}(y)\mumeasure(y)
			&\leq \int_\domain\positivepart{\log\frac{\epsilon}{|x-y|}}\ \positivepart{\vortex}(y)\mumeasure(y)
			\\&\leq \norm{\positivepart{\vortex}}{L^p_{\mumeasure}}\big(\sup_\domain\depth\big)^\frac{1}{p}
				\Bigg( \int_\domain\positivepart{\log\frac{\epsilon}{|x-y|}}^{p'}\lebesguemeasure(y) \Bigg)^\frac{1}{p'}
			\\&\leq \norm{\positivepart{\vortex}}{L^p_{\mumeasure}}\big(\sup_\domain\depth\big)^\frac{1}{p}
				\Bigg( \int_{B(0,\epsilon)}\positivepart{\log\frac{\epsilon}{|y|}}^{p'}\lebesguemeasure(y) \Bigg)^\frac{1}{p}
			\\&\leq \norm{\positivepart{\vortex}}{L^p_{\mumeasure}}\big(\sup_\domain\depth\big)^\frac{1}{p'}\epsilon^{\frac{2}{p'}}
				\Bigg( \int_{B(0,1)}\positivepart{\log\frac{1}{|y|}}^{p'}\lebesguemeasure(y) \Bigg)^\frac{1}{p'} .
	\end{align*}
Using the definition of $M$, there exists a constant $\ncste>0$ such that
	\[ \int_\domain\log\frac{\epsilon}{|x-y|}\ \positivepart{\vortex}(y)\mumeasure(y) \leq \cste\vortexstrength_\epsilon .\]
The conclusion now follows from the definitions of $\positivefirstorderflow{\vortex}$ and $\negativefirstorderflow{\vortex}$.
\end{proof}

\subsection{Main lower bound on the energy}

The following lemma will be used several times in the text.
\begin{lemma}\label{competitorpointsconstruction}
Let $x_1,x_2\in\closure{\domain}$ be such that $\depth(x_1)>0$ and $\depth(x_2)>0$,
and let $\mathcal{U}_1,\mathcal{U}_2\subseteq\domain$ be open sets containing respectively $x_1$ and $x_2$.
There exists approximation families $\{x^1_\epsilon\in\mathcal{U}_1 : \epsilon>0\}$ and $\{x^2_\epsilon\in\mathcal{U}_2:\epsilon>0\}$ such that
	\[ \lim_{\epsilon\to 0}\distance(x_1,x^1_\epsilon) = 0 ,\qquad \lim_{\epsilon\to 0}\distance(x_2,x^2_\epsilon)=0 ,\]
and the following conditions hold for sufficiently small $\epsilon>0$:
	\begin{itemize}
		\item there exists $r_1>0$ such that $\epsilon^2\leq \mumeasure\Big( B(x^1_\epsilon,r_1\epsilon) \Big)$;
		\item there exists $r_2>0$ such that $\epsilon^2\leq \mumeasure\Big( B(x^2_\epsilon,r_2\epsilon) \Big)$;
		\item $B(x^1,r_1\epsilon)\subseteq\mathcal{U}_1$ and $B(x^2,r_2\epsilon)\subseteq\mathcal{U}_2$;
		\item $\distance\Big( B(x^1,r_1\epsilon) , \boundary\domain \Big) \geq \frac{1}{\log\frac{1}{\epsilon}}$
		and $\distance\Big( B(x^2,r_2\epsilon) , \boundary\domain \Big) \geq \frac{1}{\log\frac{1}{\epsilon}}$;
		\item $\distance\Big( B(x^1,r_1\epsilon), B(x^2,r_2\epsilon) \Big) \geq \frac{1}{\log\frac{1}{\epsilon}} $.
	\end{itemize}
\end{lemma}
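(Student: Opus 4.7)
The plan is to observe that the construction is essentially elementary: the prescribed radii are of order $\epsilon$, while the prescribed separations are of order $1/\log(1/\epsilon)$, and $\epsilon=o(1/\log(1/\epsilon))$ as $\epsilon\to 0$, so there is plenty of room to place two small disks near $x_1$ and $x_2$. I will reduce to the case $x^i_\epsilon=x_i$ whenever $x_1\neq x_2$, and to a perturbation by $\pm 2v/\log(1/\epsilon)$ along any fixed unit vector $v\in\plane$ when $x_1=x_2$.

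First, I will fix the radii $r_i$ independently of $\epsilon$. By continuity of $\depth$ on $\closure{\domain}$ and the hypothesis $\depth(x_i)>0$, there exist $\delta_i>0$ and $c_i>0$ such that $\depth\geq c_i$ on $B(x_i,\delta_i)\cap\closure{\domain}$. I choose $r_i>0$ with $\pi r_i^2 c_i \geq 1$, so that any ball $B(z,r_i\epsilon)\subseteq\domain\cap B(x_i,\delta_i)$ automatically satisfies $\mumeasure(B(z,r_i\epsilon))\geq c_i\,\pi\,(r_i\epsilon)^2 \geq \epsilon^2$, which secures the two measure bounds.

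Next I address the choice of centres. Since each $\mathcal{U}_i$ is open in $\plane$ and contained in $\domain$, the inclusion $x_i\in\mathcal{U}_i\subseteq\domain$ holds and $\rho_i:=\distance(x_i,\boundary\domain)>0$. After shrinking $\delta_i$ if necessary, I may assume $B(x_i,\delta_i)\subseteq\mathcal{U}_i$. When $x_1\neq x_2$, I simply take $x^i_\epsilon:=x_i$: the estimates $\distance(B(x_i,r_i\epsilon),\boundary\domain)\geq \rho_i-r_i\epsilon$ and $\distance(B(x_1,r_1\epsilon),B(x_2,r_2\epsilon))\geq|x_1-x_2|-(r_1+r_2)\epsilon$ both exceed $1/\log(1/\epsilon)$ for $\epsilon$ small. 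When $x_1=x_2=:x$, I pick any unit vector $v\in\plane$ and set $x^1_\epsilon:=x+2v/\log(1/\epsilon)$ and $x^2_\epsilon:=x-2v/\log(1/\epsilon)$; here the two balls are separated by $4/\log(1/\epsilon)-(r_1+r_2)\epsilon>1/\log(1/\epsilon)$, again by the scale comparison, and they remain inside $B(x,\delta_1)\cap B(x,\delta_2)\subseteq\mathcal{U}_1\cap\mathcal{U}_2$.

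There is no real analytical obstacle: the only matter of care is to keep the three length scales $r_i\epsilon$, $1/\log(1/\epsilon)$, and $\min\{\delta_i,\rho_i,|x_1-x_2|\}$ (the latter replaced by $1$ in the degenerate case $x_1=x_2$) in strictly decreasing order, which is automatic once $\epsilon$ is below a threshold depending on $\depth$, the points $x_i$, and the neighborhoods $\mathcal{U}_i$. The lemma will later be invoked to produce symmetric competitor vortices, concentrated on disjoint well-behaved disks, yielding lower bounds for the maximal energy in the asymptotic analysis of \cref{sectionAsymptotic}.
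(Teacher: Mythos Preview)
Your argument is correct for the lemma \emph{as literally stated}: since $\mathcal{U}_i\subseteq\domain$ is open and $x_i\in\mathcal{U}_i$, one has $x_i\in\domain$, hence $\rho_i=\distance(x_i,\boundary\domain)>0$, and your constant choice $x^i_\epsilon=x_i$ (with the obvious perturbation when $x_1=x_2$) works. The paper itself omits the proof, saying only that it ``may be done by geometric arguments'', so in that narrow sense there is nothing to compare against.

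However, you should note the sentence the paper adds immediately after the statement: it remarks that, because nothing is assumed on the regularity of $\boundary\domain$ \emph{at the points $x_1,x_2$}, the rate of convergence of $x^i_\epsilon\to x_i$ is not explicitly known. That remark only makes sense if $x_1,x_2$ are allowed to lie on $\boundary\domain$, and indeed this is what is needed in the application (\cref{errortermdefinition}): the points $x_1,x_2$ are maximizers over $\closure{\domain}$ of continuous functions of the form $\frac{\tau\depth}{4\pi}\pm\flow$, and since $\depth$ is not assumed to vanish on $\boundary\domain$, these maximizers may well sit on the boundary. In that situation your choice $x^i_\epsilon=x_i$ is unavailable, because $B(x_i,r_i\epsilon)$ need not be contained in $\domain$, let alone at distance $1/\log(1/\epsilon)$ from $\boundary\domain$. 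One must instead pick $x^i_\epsilon\in\domain$ approaching $x_i$ from the interior, with $\distance(x^i_\epsilon,\boundary\domain)$ at least of order $1/\log(1/\epsilon)$; the existence of such points is a (mild) geometric fact about planar open sets, but the speed of the approximation depends on how $\boundary\domain$ behaves near $x_i$, which is precisely the paper's caveat. So your proof handles the literal hypotheses but misses the case the lemma is actually meant to cover; the fix is to replace the line ``$\rho_i>0$, take $x^i_\epsilon=x_i$'' by a short argument producing interior approximants when $x_i\in\boundary\domain$.
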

We omit the proof of \cref{competitorpointsconstruction}, which may be done by geometric arguments.
Note that since we do not claim anything on the regularity of $\boundary\domain$ at points $x_1$ and $x_2$,
the rate of convergence of the families $\{x^1_\epsilon:\epsilon>0\}$ and $\{x^2_\epsilon:\epsilon>0\}$
to their respective limit is not explicitly known.

\begin{proposition}\label{errortermdefinition}
For all $\epsilon>0$ sufficiently small, there exists $\error_\epsilon\leq 0$ with $\lim\limits_{\epsilon\to 0}\error_\epsilon=0$
and such that there holds
	\begin{multline*}
		\vortexstrength_\epsilon^2\log\frac{1}{\epsilon}\Bigg( \tau\sup_\domain\Bigg\{
			\frac{\tau\depth}{4\pi} + \frac{\externalflowcoefficient_{\epsilon}}{\vortexstrength_\epsilon\log\frac{1}{\epsilon}}\flow
		\Bigg\}
		+ (1-\tau)\sup_\domain\Bigg\{
			\frac{(1-\tau)\depth}{4\pi}-\frac{\externalflowcoefficient_{\epsilon}}{\vortexstrength_\epsilon\log\frac{1}{\epsilon}}\flow
		\Bigg\} \Bigg)
		\\ \leq \energy_\epsilon\big( \vortex_\epsilon \big)
			- \error_\epsilon\vortexstrength_\epsilon^2\log\frac{1}{\epsilon}.
	\end{multline*}
\end{proposition}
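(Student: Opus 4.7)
The strategy is to exhibit a well-chosen competitor $\tilde{\vortex}_\epsilon\in\rearrangement(\vortex_\epsilon)$ whose energy already saturates the claimed bound at leading order. Since $\vortex_\epsilon$ is a maximizer of $\energy_\epsilon$ over its rearrangement class, one has $\energy_\epsilon(\vortex_\epsilon)\geq\energy_\epsilon(\tilde{\vortex}_\epsilon)$, and the residual terms collect into $\error_\epsilon$.

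First I would pick $x^\star,x_\star\in\closure{\domain}$ realizing respectively the two suprema appearing on the right-hand side of the statement; these exist by continuity of $\depth$ and $\flow$ on $\closure{\domain}$, and the threshold assumption on $\externalflowcoefficient_\epsilon$ forces $\depth(x^\star)>0$ and $\depth(x_\star)>0$, so both points lie inside $\domain$. Applying \cref{competitorpointsconstruction} to $(x^\star,x_\star)$ produces approximation families $\{x^1_\epsilon\}$ and $\{x^2_\epsilon\}$ together with radii $r_1,r_2>0$ such that $\mumeasure(B(x^i_\epsilon,r_i\epsilon))\geq\epsilon^2$, the two balls are disjoint, and each of them sits at distance at least $(\log\tfrac{1}{\epsilon})^{-1}$ from $\boundary\domain$ and from one another.

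I then define the competitor
    \[ \tilde{\vortex}_\epsilon = \symmetrizearoundpoint{x^1_\epsilon}{\positivepart{\vortex_\epsilon}} - \symmetrizearoundpoint{x^2_\epsilon}{\negativepart{\vortex_\epsilon}} \]
via \cref{symmetrizearoundpoint}. Since the two ball supports are disjoint and each of $\mumeasure$-measure at most $\epsilon^2$, it is immediate that $\tilde{\vortex}_\epsilon\in\rearrangement(\vortex_\epsilon)$, with $\positivepart{\tilde{\vortex}_\epsilon}$ supported in $B(x^1_\epsilon,r_1\epsilon)$ and $\negativepart{\tilde{\vortex}_\epsilon}$ in $B(x^2_\epsilon,r_2\epsilon)$. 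I next evaluate $\energy_\epsilon(\tilde{\vortex}_\epsilon)$ through the exact identity of \cref{energyupperboundbyfows}. The kernel term involving $\rectifykernel$ and $\regulargreenlaplace$ is of order $\vortexstrength_\epsilon^2$ since both kernels are bounded, while the cross term between the two parts is controlled by the separation: $|x-y|\geq(\log\tfrac{1}{\epsilon})^{-1}$ gives a logarithmic integrand at most $C\log\log\tfrac{1}{\epsilon}$, hence an overall contribution that is $o(\vortexstrength_\epsilon^2\log\tfrac{1}{\epsilon})$. For the leading self-interaction term, the bound $|x-y|\leq 2r_i\epsilon$ between points in the same ball yields $\log\frac{\diameter(\domain)}{|x-y|}\geq\log\tfrac{1}{\epsilon}-C$, and combining this with the continuity of $\depth$ and $\flow$ at $x^\star$ together with the defining property of $x^\star$ delivers
    \[ \tau\vortexstrength_\epsilon\int_\domain\positivefirstorderflow{\tilde{\vortex}_\epsilon}\dif\positivepart{\tilde{\vortex}_\epsilon}
        \geq \tau\vortexstrength_\epsilon^2\log\tfrac{1}{\epsilon}\sup_\domain\Big\{\tfrac{\tau\depth}{4\pi}+\tfrac{\externalflowcoefficient_\epsilon}{\vortexstrength_\epsilon\log\frac{1}{\epsilon}}\flow\Big\}+o(\vortexstrength_\epsilon^2\log\tfrac{1}{\epsilon}) ,\]
and an analogous estimate holds around $x_\star$ for the negative part.

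Adding the two leading contributions to the $O(\vortexstrength_\epsilon^2)$ kernel term and the $o(\vortexstrength_\epsilon^2\log\tfrac{1}{\epsilon})$ cross term yields the claim with a residual whose ratio to $\vortexstrength_\epsilon^2\log\tfrac{1}{\epsilon}$ tends to $0$; any positive portion of this residual can be absorbed into the already controlled leading term to enforce $\error_\epsilon\leq 0$. The only delicate point is the calibration of the separation scale $(\log\tfrac{1}{\epsilon})^{-1}$ provided by \cref{competitorpointsconstruction}: it makes the cross term negligible against $\vortexstrength_\epsilon^2\log\tfrac{1}{\epsilon}$ while preserving the full logarithmic growth $\log\tfrac{1}{\epsilon}$ of the self-interaction terms. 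Everything else amounts to unpacking the decomposition of \cref{energyupperboundbyfows} and using continuity of $\depth$ and $\flow$ near $x^\star$ and $x_\star$.
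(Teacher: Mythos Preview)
Your approach is essentially the paper's: pick (approximate) maximizers of the two target functionals, build the competitor via \cref{competitorpointsconstruction} and \cref{symmetrizearoundpoint}, evaluate its energy through the identity of \cref{energyupperboundbyfows}, and collect the sub-leading remainders into $\error_\epsilon$.

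There is one slip in the bookkeeping. You write that the kernel term involving $\rectifykernel$ and $\regulargreenlaplace$ is $O(\vortexstrength_\epsilon^2)$ ``since both kernels are bounded''. The function $\rectifykernel$ is indeed bounded (\cref{reduciblelake}), but $\regulargreenlaplace$ is not: one only has
\[
\regulargreenlaplace(x,y)\leq \frac{1}{2\pi}\log\frac{\diameter(\domain)}{\distance(y,\boundary\domain)} ,
\]
which blows up near $\boundary\domain$. This is precisely why \cref{competitorpointsconstruction} also guarantees $\distance\big(B(x^i_\epsilon,r_i\epsilon),\boundary\domain\big)\geq (\log\tfrac{1}{\epsilon})^{-1}$: on the competitor's support this yields $\regulargreenlaplace\leq C\log\log\tfrac{1}{\epsilon}$, so the corresponding contribution is $O\big(\vortexstrength_\epsilon^2\log\log\tfrac{1}{\epsilon}\big)$ rather than $O(\vortexstrength_\epsilon^2)$. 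This is still $o\big(\vortexstrength_\epsilon^2\log\tfrac{1}{\epsilon}\big)$ and the argument goes through unchanged, but the justification must invoke the boundary-distance condition, not boundedness of $\regulargreenlaplace$. (A minor related point: $\depth(x^\star)>0$ does not force $x^\star\in\domain$ in the non-degenerate case; \cref{competitorpointsconstruction} is stated for $x_1,x_2\in\closure{\domain}$ precisely to cover this.)
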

\begin{proof}
Define the function
	\[ \phi : \closure{\domain}\to\reals : \phi(z) = \frac{\tau\depth(z)}{4\pi} + \flow(z) . \]
The function $\phi$ reaches a maximal value at some point $x_1\in\{\depth>0\}$,
and the function $\frac{\depth}{4\pi}-\phi$ reaches a maximal value at some point $x_2\in\{\depth>0\}$.
There exists $\eta_1,\eta_2>$ such that the sets $\mathcal{U}_1=\{b>\eta_1\}$ and $\mathcal{U}_2=\{b>\eta_2\}$ contain respectively $x_1$ and $x_2$.
Let $\{x^1_\epsilon\in\mathcal{U}_1:\epsilon>0\}$ and $\{x^2_\epsilon\in\mathcal{U}_2:\epsilon>0\}$
be given by \cref{competitorpointsconstruction}, and $r_1,r_2>0$ the associated radii.
For all $\epsilon>0$ sufficiently small, we compare the maximal energy with the energy
produced by symmetrized pairs $\symmetrizearoundpoint{x^1_\epsilon}{\positivepart{\vortex_\epsilon}}$
and $\symmetrizearoundpoint{x^2_\epsilon}{\negativepart{\vortex_\epsilon}}$. For sufficiently small $\epsilon>0$, the function
$\xi_\epsilon=\big( \symmetrizearoundpoint{x^1_\epsilon}{\positivepart{\vortex_\epsilon}} - \symmetrizearoundpoint{x^2_\epsilon}{\negativepart{\vortex_\epsilon}} \big)$
is a $\mumeasure$-rearrangement of $\vortex_\epsilon$, and
	\[ \big\{ \symmetrizearoundpoint{x^1_\epsilon}{\positivepart{\vortex_\epsilon}} > 0 \big\} \subseteq B(x^1_\epsilon, r_1\epsilon) ,\qquad
		\big\{ \symmetrizearoundpoint{x^2_\epsilon}{\negativepart{\vortex_\epsilon}} > 0 \big\} \subseteq B(x^2_\epsilon, r_2\epsilon) . \]
Now we compute the energy $\energy_\epsilon$ produced by the above competitor $\xi_\epsilon$ using \cref{energyupperboundbyfows}:
	\begin{multline*}
		\energy_\epsilon(\xi_\epsilon) = \int_\domain\positivefirstorderflow{\symmetrizearoundpoint{x^1_\epsilon}{\positivepart{\vortex_\epsilon}}}\ \symmetrizearoundpoint{x^1_\epsilon}{\positivepart{\vortex_\epsilon}}\mumeasure
			+ \int_\domain\negativefirstorderflow{\symmetrizearoundpoint{x^2_\epsilon}{\negativepart{\vortex_\epsilon}}}\ \symmetrizearoundpoint{x^2_\epsilon}{\negativepart{\vortex_\epsilon}}\mumeasure
			\\ + \frac{1}{2}\iint_{\domain\times\domain} \big(\rectifykernel(x,y)-b(x)\regulargreenlaplace(x,y)\big)\,\xi_\epsilon(x)\xi_\epsilon(y)\mumeasure\otimes\mumeasure(x,y)
			\\ - \iint_{\domain\times\domain}\frac{\depth(x)+\depth(y)}{4\pi}\log\frac{\diameter(\domain)}{|x-y|}
				\ \symmetrizearoundpoint{x^1_\epsilon}{\positivepart{\vortex_\epsilon}}(x)\symmetrizearoundpoint{x^2_\epsilon}{\negativepart{\vortex_\epsilon}}(y)\mumeasure\otimes\mumeasure(x,y) .
	\end{multline*}
We recall that $\rectifykernel$ is bounded, and  $\regulargreenlaplace$ is symmetric, positive, continuous and satisfies
	\[ \regulargreenlaplace(x,y) \leq \frac{1}{2\pi}\log\frac{\diameter(\domain)}{\distance(y,\boundary\domain)} . \]
Because the diameter of $\{\symmetrizearoundpoint{x^1_\epsilon}{\positivepart{\vortex_\epsilon}}>0\}$ is smaller than $r_1\epsilon$, we have
	\[ \iint_{\domain\times\domain}\frac{\depth(x)}{4\pi}\log\frac{\diameter(\domain)}{|x-y|}\ \symmetrizearoundpoint{x^1_\epsilon}{\positivepart{\vortex_\epsilon}}
		\symmetrizearoundpoint{x^1_\epsilon}{\positivepart{\vortex_\epsilon}} \mumeasure\mumeasure
	\geq \tau\vortexstrength_\epsilon\log\frac{1}{\epsilon}\int_\domain\frac{\depth}{4\pi}\ \symmetrizearoundpoint{x^1_\epsilon}{\positivepart{\vortex_\epsilon}}\mumeasure
		- \ncste\vortexstrength_\epsilon^2 .\]
for some constant $\cste>0$. Similarly, there exists $\ncste>0$ such that
	\[ \iint_{\domain\times\domain}\frac{\depth(x)}{4\pi}\log\frac{\diameter(\domain)}{|x-y|}\ \symmetrizearoundpoint{x^2_\epsilon}{\negativepart{\vortex_\epsilon}}
		\symmetrizearoundpoint{x^2_\epsilon}{\negativepart{\vortex_\epsilon}} \mumeasure\mumeasure
	\geq (1-\tau)\vortexstrength_\epsilon\log\frac{1}{\epsilon}\int_\domain\frac{\depth}{4\pi}\ \symmetrizearoundpoint{x^2_\epsilon}{\negativepart{\vortex_\epsilon}}\mumeasure
		- \ncste\vortexstrength_\epsilon^2 .\]
By construction (\cref{competitorpointsconstruction}), we also have
	\[ \distance\Big( B(x^1_\epsilon,r_1\epsilon) , B(x^2_\epsilon,r_2\epsilon) \Big) \geq \frac{1}{\log\frac{1}{\epsilon}} ,\]
so that there exists $\ncste>0$ with
	\[ \iint_{\domain\times\domain}\frac{\depth(x)}{4\pi}\log\frac{\diameter(\domain)}{|x-y|}\ \symmetrizearoundpoint{x^1_\epsilon}{\positivepart{\vortex_\epsilon}}
		\symmetrizearoundpoint{x^2_\epsilon}{\negativepart{\vortex_\epsilon}} \mumeasure\mumeasure
	\leq \tau(1-\tau)\cste\vortexstrength_\epsilon^2\bigg(1+\log\log\frac{1}{\epsilon}\bigg) .\]
Recalling the conditions (\cref{competitorpointsconstruction}) that
	\[ \distance\Big( B(x^1_\epsilon,r_1\epsilon) , \boundary\domain \Big) \geq \frac{1}{\log\frac{1}{\epsilon}} ,
		\qquad \distance\Big( B(x^2_\epsilon,r_2\epsilon) , \boundary\domain \Big) \geq \frac{1}{\log\frac{1}{\epsilon}} ,\]
the upper bound for $\regulargreenlaplace$ and the boundedness of $\rectifykernel$,
we obtain
	\begin{align*}
		\energy_\epsilon\big( \symmetrizearoundpoint{x^1_\epsilon}{\positivepart{\vortex_\epsilon}} - \symmetrizearoundpoint{x^2_\epsilon}{\negativepart{\vortex_\epsilon}} \big)
			& \geq \vortexstrength_\epsilon\log\frac{1}{\epsilon}\int_\domain\frac{\tau\depth}{4\pi}\ \symmetrizearoundpoint{x^1_\epsilon}{\positivepart{\vortex_\epsilon}}\mumeasure
			+ \vortexstrength_\epsilon\log\frac{1}{\epsilon}\int_\domain\frac{(1-\tau)\depth}{4\pi}\ \symmetrizearoundpoint{x^2_\epsilon}{\negativepart{\vortex_\epsilon}}\mumeasure
			\\&\qquad + \int_\domain \symmetrizearoundpoint{x^1_\epsilon}{\positivepart{\vortex_\epsilon}}\externalflowcoefficient_{\epsilon}\flow\mumeasure
				- \int_\domain \symmetrizearoundpoint{x^2_\epsilon}{\negativepart{\vortex_\epsilon}}\externalflowcoefficient_{\epsilon}\flow\mumeasure
			\\&\quad - \cste\vortexstrength_\epsilon^2\bigg( 1+\log\log\frac{1}{\epsilon}\bigg) .
	\end{align*}
From this we conclude that
	\begin{align*}
		\energy_\epsilon\big( \symmetrizearoundpoint{x^1_\epsilon}{\positivepart{\vortex_\epsilon}} - \symmetrizearoundpoint{x^2_\epsilon}{\negativepart{\vortex_\epsilon}} \big)
			& \geq \tau\vortexstrength_\epsilon^2\log\frac{1}{\epsilon}\inf_{B(x^1_\epsilon,r_1\epsilon)}\Bigg\{
				\frac{\tau\depth}{4\pi} + \frac{\externalflowcoefficient_{\epsilon}}{\vortexstrength_\epsilon\log\frac{1}{\epsilon}}\flow
			\Bigg\}
			\\&\qquad + (1-\tau)\vortexstrength_\epsilon^2\log\frac{1}{\epsilon}\inf_{B(x^2_\epsilon,r_2\epsilon)}\Bigg\{
				\frac{(1-\tau)\depth}{4\pi}-\frac{\externalflowcoefficient_{\epsilon}}{\vortexstrength_\epsilon\log\frac{1}{\epsilon}}\flow
			\Bigg\}
			\\&\quad - \cste\vortexstrength_\epsilon^2\bigg( 1+\log\log\frac{1}{\epsilon}\bigg) .
	\end{align*}
By definition of the family $\{\vortex_\epsilon:\epsilon>0\}$, we have
	\[ \energy_\epsilon(\xi_\epsilon)
		= \energy_\epsilon\big( \symmetrizearoundpoint{x^1_\epsilon}{\positivepart{\vortex_\epsilon}} - \symmetrizearoundpoint{x^2_\epsilon}{\negativepart{\vortex_\epsilon}} \big)
		\leq \energy_\epsilon\big( \positivepart{\vortex_\epsilon} - \negativepart{\vortex_\epsilon} \big)
		= \energy_\epsilon(\vortex_\epsilon) ,\]
and therefore we have
	\begin{multline*}
		\vortexstrength_\epsilon^2\log\frac{1}{\epsilon}\Bigg( \tau\inf_{B(x^1_\epsilon,r_1\epsilon)}\Bigg\{
			\frac{\tau\depth}{4\pi} + \frac{\externalflowcoefficient_{\epsilon}}{\vortexstrength_\epsilon\log\frac{1}{\epsilon}}\flow
		\Bigg\}
		+ (1-\tau)\inf_{B(x^2_\epsilon,r_2\epsilon)}\Bigg\{
			\frac{(1-\tau)\depth}{4\pi}-\frac{\externalflowcoefficient_{\epsilon}}{\vortexstrength_\epsilon\log\frac{1}{\epsilon}}\flow
		\Bigg\} \Bigg)
		\\ \leq \energy_\epsilon\big( \vortex_\epsilon \big)
			+ \cste\Big(1+\log\log\frac{1}{\epsilon}\Big).
	\end{multline*}
We define the error term
	\begin{multline*}
		\error_\epsilon = \tau\Bigg(\inf_{B(x^1_\epsilon,r_1\epsilon)}\Bigg\{
			\frac{\tau\depth}{4\pi} + \frac{\externalflowcoefficient_{\epsilon}}{\vortexstrength_\epsilon\log\frac{1}{\epsilon}}\flow
			\Bigg\} - \sup_\domain\Bigg\{
				\frac{\tau\depth}{4\pi} + \frac{\externalflowcoefficient_{\epsilon}}{\vortexstrength_\epsilon\log\frac{1}{\epsilon}}\flow
			\Bigg\} \Bigg)
			\\ + (1-\tau)\Bigg( \inf_{B(x^2_\epsilon,r_2\epsilon)}\Bigg\{
				\frac{(1-\tau)\depth}{4\pi}-\frac{\externalflowcoefficient_{\epsilon}}{\vortexstrength_\epsilon\log\frac{1}{\epsilon}}\flow
			\Bigg\} - \sup_\domain\Bigg\{
				\frac{(1-\tau)\depth}{4\pi}-\frac{\externalflowcoefficient_{\epsilon}}{\vortexstrength_\epsilon\log\frac{1}{\epsilon}}\flow
			\Bigg\} \Bigg)
			\\ - \cste\frac{1+\log\log\frac{1}{\epsilon}}{\log\frac{1}{\epsilon}} ,
	\end{multline*}
so that $\error_\epsilon\leq 0$ and
	\begin{multline*}
		\vortexstrength_\epsilon^2\log\frac{1}{\epsilon}\Bigg( \tau\sup_\domain\Bigg\{
			\frac{\tau\depth}{4\pi} + \frac{\externalflowcoefficient_{\epsilon}}{\vortexstrength_\epsilon\log\frac{1}{\epsilon}}\flow
		\Bigg\}
		+ (1-\tau)\sup_\domain\Bigg\{
			\frac{(1-\tau)\depth}{4\pi}-\frac{\externalflowcoefficient_{\epsilon}}{\vortexstrength_\epsilon\log\frac{1}{\epsilon}}\flow
		\Bigg\} \Bigg)
		\\ \leq \energy_\epsilon\big( \vortex_\epsilon \big)
			- \error_\epsilon\vortexstrength_\epsilon^2\log\frac{1}{\epsilon}.
	\end{multline*}
Finally, the uniform continuity of $\depth$ and $\flow$ on the compact set $\closure{\domain}$, shows that we have $\lim\limits_{\epsilon\to 0}\error_\epsilon=0$.
\end{proof}

\subsection{Truncation of the vortex core}

In this section we show that the main part of the vortex core is located in an area
of the domain $\domain$ where the leading partial flows are large.
The truncation process was already used by Turkington in the context of
time-dependent solutions of the Euler equations~\cite{TurkingtonEvolution} (see also~\citelist{\cite{TurkingtonSteady1}\cite{TurkingtonSteady2}}).

\begin{corollary}\label{vortextruncation}
Let $\{\error_\epsilon\in\reals:\epsilon>0\}$ be a family of real numbers satisfying the claim of \cref{errortermdefinition}.
There exists a constant $C>0$ such that for all $\kappa>0$, we have
	\[ \tau\vortexstrength_\epsilon\int_{\domain\setminus D^\kappa_\epsilon}\dif\positivepart{\vortex_\epsilon} \leq
		\frac{1}{\kappa}\frac{4\pi}{\sup_\domain\depth}\vortexstrength_\epsilon\Bigg( \error_\epsilon + \frac{C}{\log\frac{1}{\epsilon}} \Bigg) . \]
with
	\[ D^\kappa_\epsilon = \Bigg\{ x\in\domain : \positivefirstorderflow{\vortex_\epsilon}(x)\geq \int_\domain\positivefirstorderflow{\vortex_\epsilon}\dif\positivepart{\vortex_\epsilon}
		- \kappa\frac{\sup_\domain\depth}{4\pi}\log\frac{1}{\epsilon}\vortexstrength_\epsilon \Bigg\} ; \]
and similarly
	\[ (1-\tau)\vortexstrength_\epsilon\int_{\domain\setminus U^\kappa_\epsilon}\dif\negativepart{\vortex_\epsilon} \leq
		\frac{1}{\kappa}\frac{4\pi}{\sup_\domain\depth}\vortexstrength_\epsilon\Bigg( \error_\epsilon + \frac{C}{\log\frac{1}{\epsilon}} \Bigg) . \]
with
	\[ U^\kappa_\epsilon = \Bigg\{ x\in\domain : \negativefirstorderflow{\vortex_\epsilon}(x)\geq \int_\domain\negativefirstorderflow{\vortex_\epsilon}\dif\negativepart{\vortex_\epsilon}
		- \kappa\frac{\sup_\domain\depth}{4\pi}\log\frac{1}{\epsilon}\vortexstrength_\epsilon \Bigg\} . \]
\end{corollary}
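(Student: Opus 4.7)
The plan combines three ingredients already established: the energy upper bound of \cref{energyupperboundbyfows}, the pointwise upper bound on the leading partial flows of \cref{firstorderflowupperbound}, and the matching lower bound on the maximal energy of \cref{errortermdefinition}. The set $D^\kappa_\epsilon$ is tailored precisely so that, when the latter two bounds on $\energy_\epsilon(\vortex_\epsilon)$ are matched against each other, the leading order terms cancel and the mass $\tau\vortexstrength_\epsilon\int_{\domain\setminus D^\kappa_\epsilon}\dif\positivepart{\vortex_\epsilon}$ outside $D^\kappa_\epsilon$ is left controlled by $-\error_\epsilon + C/\log\frac{1}{\epsilon}$.

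\emph{Step 1: a refined upper bound on} $I^+:=\int_\domain \positivefirstorderflow{\vortex_\epsilon}\dif\positivepart{\vortex_\epsilon}$. Set $m^+:=\int_{\domain\setminus D^\kappa_\epsilon}\dif\positivepart{\vortex_\epsilon}\in[0,1]$ and split $I^+$ over $D^\kappa_\epsilon$ and its complement. On $D^\kappa_\epsilon$ I apply the pointwise bound of \cref{firstorderflowupperbound}. On the complement, by the very definition of $D^\kappa_\epsilon$ one has $\positivefirstorderflow{\vortex_\epsilon}(x)<I^+-\kappa\frac{\sup_\domain\depth}{4\pi}\log\frac{1}{\epsilon}\vortexstrength_\epsilon$, and I bound $I^+$ itself a priori by \cref{firstorderflowupperbound}. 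Summing the two contributions yields
\[ I^+ \leq \vortexstrength_\epsilon\log\tfrac{1}{\epsilon}\,S^+ + C\tau\vortexstrength_\epsilon - \kappa\,\tfrac{\sup_\domain\depth}{4\pi}\log\tfrac{1}{\epsilon}\,\vortexstrength_\epsilon\, m^+, \]
where $S^+:=\sup_\domain\bigl\{\tfrac{\tau\depth}{4\pi}+\tfrac{\externalflowcoefficient_\epsilon}{\vortexstrength_\epsilon\log(1/\epsilon)}\flow\bigr\}$. The symmetric estimate holds for $I^-:=\int_\domain\negativefirstorderflow{\vortex_\epsilon}\dif\negativepart{\vortex_\epsilon}$ with $m^-$ and $S^-:=\sup_\domain\bigl\{\tfrac{(1-\tau)\depth}{4\pi}-\tfrac{\externalflowcoefficient_\epsilon}{\vortexstrength_\epsilon\log(1/\epsilon)}\flow\bigr\}$.

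\emph{Step 2: matching with the energy lower bound.} Inserting these refined bounds into \cref{energyupperboundbyfows} produces an upper bound on $\energy_\epsilon(\vortex_\epsilon)$ whose leading part is $\vortexstrength_\epsilon^2\log\tfrac{1}{\epsilon}\bigl(\tau S^+ + (1-\tau)S^-\bigr)$, corrected additively by $-\kappa\,\tfrac{\sup_\domain\depth}{4\pi}\log\tfrac{1}{\epsilon}\,\vortexstrength_\epsilon^2\bigl(\tau m^+ + (1-\tau)m^-\bigr)$. Comparing this with the lower bound of \cref{errortermdefinition}, the leading parts cancel and, after dividing by $\vortexstrength_\epsilon\log\tfrac{1}{\epsilon}$, one obtains
\[ \kappa\,\tfrac{\sup_\domain\depth}{4\pi}\,\vortexstrength_\epsilon\bigl(\tau m^+ + (1-\tau)m^-\bigr) \leq -\error_\epsilon\vortexstrength_\epsilon + \tfrac{C\vortexstrength_\epsilon}{\log(1/\epsilon)}. \]

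\emph{Step 3: conclusion.} Since $\tau m^+$ and $(1-\tau)m^-$ are both non-negative, each is dominated individually by the sum on the left, and dividing by $\kappa\,\tfrac{\sup_\domain\depth}{4\pi}$ yields the two inequalities of the statement. The argument presents no substantial obstacle: the real observation is that $D^\kappa_\epsilon$ is defined exactly at the threshold where the truncation correction on $I^+$ lands at the scale $\vortexstrength_\epsilon^2\log(1/\epsilon)$, which is precisely the scale of the error term $\error_\epsilon\vortexstrength_\epsilon^2\log(1/\epsilon)$ in \cref{errortermdefinition}, so that direct comparison closes the estimate. The only careful bookkeeping concerns the interplay between the two signs of the vortex and the propagation of the $\tau$ and $1-\tau$ weights.
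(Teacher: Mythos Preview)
Your argument is correct and follows essentially the same route as the paper: combine the upper bound on $\energy_\epsilon(\vortex_\epsilon)$ from \cref{energyupperboundbyfows}, the pointwise bound of \cref{firstorderflowupperbound}, and the lower bound of \cref{errortermdefinition}, so that the leading terms $\vortexstrength_\epsilon^2\log\tfrac{1}{\epsilon}\bigl(\tau S^+ + (1-\tau)S^-\bigr)$ cancel and only the truncation mass survives. The only organisational difference is that the paper handles the positive and negative parts one at a time (killing $I^-$ with \cref{firstorderflowupperbound} before analysing $I^+$) and extracts the mass bound via the zero-mean identity
\[
\int_{\domain\setminus D^\kappa_\epsilon}\bigl(I^+-\positivefirstorderflow{\vortex_\epsilon}\bigr)\dif\positivepart{\vortex_\epsilon}
=\int_{D^\kappa_\epsilon}\bigl(\positivefirstorderflow{\vortex_\epsilon}-I^+\bigr)\dif\positivepart{\vortex_\epsilon},
\]
whereas you keep both parts in play and bound $I^\pm$ directly by splitting the integral over $D^\kappa_\epsilon$ and its complement; the two computations are equivalent and your symmetric treatment even yields both inequalities simultaneously via Step~3.
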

\begin{proof}
It is sufficient to prove the claim for $\positivefirstorderflow{\vortex_\epsilon}$ and $\tau>0$.
By assumption, we have for all sufficiently small $\epsilon>0$:
	\begin{multline*}
		\vortexstrength_\epsilon^2\log\frac{1}{\epsilon}\Bigg( \tau\sup_\domain\Bigg\{
			\frac{\tau\depth}{4\pi} + \frac{\externalflowcoefficient_{\epsilon}}{\vortexstrength_\epsilon\log\frac{1}{\epsilon}}\flow
		\Bigg\}
		+ (1-\tau)\sup_\domain\Bigg\{
			\frac{(1-\tau)\depth}{4\pi}-\frac{\externalflowcoefficient_{\epsilon}}{\vortexstrength_\epsilon\log\frac{1}{\epsilon}}\flow
		\Bigg\} \Bigg)
		\\ \leq \energy_\epsilon\big( \vortex_\epsilon \big)
			- \error_\epsilon\vortexstrength_\epsilon^2\log\frac{1}{\epsilon}.
	\end{multline*}
According to \cref{energyupperboundbyfows}, there exists $\ncste>0$ such that
	\[ \energy_\epsilon\big( \vortex_\epsilon \big)
		\leq \tau\vortexstrength_\epsilon\int_\domain\positivefirstorderflow{\vortex_\epsilon}\dif\positivepart{\vortex_\epsilon}
			+ (1-\tau)\vortexstrength_\epsilon\int_\domain\negativefirstorderflow{\vortex_\epsilon}\dif\negativepart{\vortex_\epsilon}
		+ \cste\vortexstrength_\epsilon^2 .\]
From this we conclude the inequality
	\begin{multline*}
		\vortexstrength_\epsilon^2\log\frac{1}{\epsilon}\Bigg( \tau\sup_\domain\Bigg\{
			\frac{\tau\depth}{4\pi} + \frac{\externalflowcoefficient_{\epsilon}}{\vortexstrength_\epsilon\log\frac{1}{\epsilon}}\flow
		\Bigg\}
		+ (1-\tau)\sup_\domain\Bigg\{
			\frac{(1-\tau)\depth}{4\pi}-\frac{\externalflowcoefficient_{\epsilon}}{\vortexstrength_\epsilon\log\frac{1}{\epsilon}}\flow
		\Bigg\} \Bigg)
		\\ \leq \tau\vortexstrength_\epsilon\int_\domain \positivefirstorderflow{\vortex_\epsilon} \dif\positivepart{\vortex_\epsilon}
			+ (1-\tau)\vortexstrength_\epsilon\int_\domain \negativefirstorderflow{\vortex_\epsilon} \dif\negativepart{\vortex_\epsilon}
			- \error_\epsilon\vortexstrength_\epsilon^2\log\frac{1}{\epsilon} + \cste\vortexstrength_\epsilon^2.
	\end{multline*}
On the other hand, an application of \cref{firstorderflowupperbound} for $\negativefirstorderflow{\vortex_\epsilon}$ yields
	\[ \vortexstrength_\epsilon^2\log\frac{1}{\epsilon} \tau\sup_\domain\Bigg\{
			\frac{\tau\depth}{4\pi} + \frac{\externalflowcoefficient_{\epsilon}}{\vortexstrength_\epsilon\log\frac{1}{\epsilon}}\flow
		\Bigg\}
		\leq \tau\vortexstrength_\epsilon\int_\domain \positivefirstorderflow{\vortex_\epsilon} \dif\positivepart{\vortex_\epsilon}
			- \error_\epsilon\vortexstrength_\epsilon^2\log\frac{1}{\epsilon} + \ncste\vortexstrength_\epsilon^2 .\]
An application of \cref{firstorderflowupperbound} for $\positivefirstorderflow{\vortex_\epsilon}$ yields in turn
	\[ \positivefirstorderflow{\vortex_\epsilon} \leq \vortexstrength_\epsilon\log\frac{1}{\epsilon}\sup_\domain\Bigg\{
		\frac{\tau\depth}{4\pi} + \frac{\externalflowcoefficient_{\epsilon}}{\vortexstrength_\epsilon\log\frac{1}{\epsilon}}\flow
	\Bigg\}
		+ \ncste\vortexstrength_\epsilon . \]
On $\domain$ we thus have
	\[ \tau\vortexstrength_\epsilon\positivefirstorderflow{\vortex_\epsilon}-\tau\vortexstrength_\epsilon\int_\domain\positivefirstorderflow{\vortex_\epsilon}\dif\positivepart{\vortex_\epsilon} 
		\leq \error_\epsilon\vortexstrength_\epsilon^2\log\frac{1}{\epsilon} + \ncste\vortexstrength_\epsilon^2 . \]
Now we compute
	\[ \tau\vortexstrength_\epsilon\int_{\domain\setminus D^\kappa_\epsilon}\Bigg(\int_\domain\positivefirstorderflow{\vortex_\epsilon}\dif\positivepart{\vortex_\epsilon}- \positivefirstorderflow{\vortex_\epsilon}\Bigg)
		\dif\positivepart{\vortex_\epsilon}
	= \tau\vortexstrength_\epsilon\int_{D^\kappa_\epsilon}\Bigg( \positivefirstorderflow{\vortex_\epsilon}-\int_\domain\positivefirstorderflow{\vortex_\epsilon}\dif\positivepart{\vortex_\epsilon}\Bigg)\dif\positivepart{\vortex_\epsilon} .\]
Using the definition of $D^\kappa_\epsilon$ we obtain
	\[ \tau\vortexstrength_\epsilon\int_{\domain\setminus D^\kappa_\epsilon}\dif\positivepart{\vortex_\epsilon}
	\leq \frac{1}{\kappa}\frac{4\pi}{\sup_\domain\depth}\,\vortexstrength_\epsilon\Bigg( \error_\epsilon + \frac{\cste}{\log\frac{1}{\epsilon}} \Bigg) .  \qedhere\]
\end{proof}

\subsection{Concentration of the truncated vortices}

In this section we prove that most of the vortex is located in two small balls: one containing
the positive part of the pair, and the other the negative part of the pair.

\begin{theorem}\label{concentration}
Let $\{\error_\epsilon:\epsilon>0\}$ be a family of real numbers as in \cref{errortermdefinition},
and let
	\[ D^\kappa_\epsilon = \Bigg\{ x\in\domain : \positivefirstorderflow{\vortex_\epsilon}(x)\geq \int_\domain\positivefirstorderflow{\vortex_\epsilon}\dif\positivepart{\vortex_\epsilon}
		- \kappa\frac{\sup_\domain\depth}{4\pi}\log\frac{1}{\epsilon}\vortexstrength_\epsilon \Bigg\} \]
and
	\[ U^\kappa_\epsilon = \Bigg\{ x\in\domain : \negativefirstorderflow{\vortex_\epsilon}(x)\geq \int_\domain\negativefirstorderflow{\vortex_\epsilon}\dif\negativepart{\vortex_\epsilon}
		- \kappa\frac{\sup_\domain\depth}{4\pi}\log\frac{1}{\epsilon}\vortexstrength_\epsilon \Bigg\} .\]
There exists $\sigma,\kappa>0$ such that for all sufficiently small $\epsilon>0$,
	\[ \diameter\big( D^\kappa_\epsilon \big) \leq \epsilon^\sigma \]
and
	\[ \diameter\big( U^\kappa_\epsilon \big) \leq \epsilon^\sigma .\]
\end{theorem}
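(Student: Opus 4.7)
The plan is a contradiction argument. If $D^\kappa_\epsilon$ had diameter greater than $\epsilon^\sigma$, one could exhibit a point $x_2\in D^\kappa_\epsilon$ such that most of the positive vortex mass sits at distance $\geq\epsilon^\sigma/2$ from $x_2$; the resulting upper bound on the logarithmic potential defining $\positivefirstorderflow{\vortex_\epsilon}(x_2)$ would fall short of the lower bound enforced by membership in $D^\kappa_\epsilon$. By symmetry only $D^\kappa_\epsilon$ needs to be treated; throughout, write $\phi^+_\epsilon(x):=\tau\depth(x)/(4\pi)+\externalflowcoefficient_\epsilon\flow(x)/(\vortexstrength_\epsilon\log(1/\epsilon))$ for the rescaled profile.

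The first step is to confine $D^\kappa_\epsilon$ to a thick region $\{\depth\geq \eta\}$ for some $\eta>0$ independent of $\epsilon$. Combining the lower bound for $\int \positivefirstorderflow{\vortex_\epsilon}\dif\positivepart{\vortex_\epsilon}$ already derived in the proof of \cref{vortextruncation} with the pointwise upper bound of \cref{firstorderflowupperbound}, every $x\in D^\kappa_\epsilon$ satisfies
\[
\phi^+_\epsilon(x)\;\geq\;\sup_\domain\phi^+_\epsilon\;-\;\kappa\,\frac{\sup_\domain\depth}{4\pi}\;-\;o_\epsilon(1),
\]
with $o_\epsilon(1)\to 0$ as $\epsilon\to 0$ uniformly in $\kappa$. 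The smallness hypothesis on $\flow$ yields $\sup_\domain\phi^+_\epsilon\geq (\tau/2+\threshold)\sup_\domain\depth/(4\pi)$ (by evaluation at a maximum of $\depth$) together with the uniform bound $\externalflowcoefficient_\epsilon|\flow|/(\vortexstrength_\epsilon\log(1/\epsilon))\leq (\tau/2-\threshold)\sup_\domain\depth/(4\pi)$; plugging these into the previous inequality forces $\tau\depth(x)\geq (2\threshold-\kappa)\sup_\domain\depth-o_\epsilon(1)$, whence $\depth(x)\geq \eta$ for any fixed $\kappa<\threshold$ once $\epsilon$ is small enough.

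Next, fix $\sigma=\tfrac{1}{2}$ and suppose for contradiction that $\diameter(D^\kappa_\epsilon)\geq \epsilon^\sigma$; pick $x_1,x_2\in D^\kappa_\epsilon$ with $|x_1-x_2|\geq\epsilon^\sigma$, and let $H$ be the closed half-plane containing $x_1$ bounded by the perpendicular bisector of $[x_1,x_2]$, so that $|x_2-y|\geq\epsilon^\sigma/2$ for every $y\in H$. By pigeonhole, one of $\int_H\positivepart{\vortex_\epsilon}\mumeasure$, $\int_{H^c}\positivepart{\vortex_\epsilon}\mumeasure$ is at least $\tau\vortexstrength_\epsilon/2$; after possibly swapping the labels $x_1,x_2$, assume $m:=\int_H\positivepart{\vortex_\epsilon}\mumeasure\geq \tau\vortexstrength_\epsilon/2$. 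Splitting the defining integral of $\positivefirstorderflow{\vortex_\epsilon}(x_2)$ between $H$ (using $\log(\diameter(\domain)/|x_2-y|)\leq \sigma\log(1/\epsilon)+O(1)$) and $\domain\setminus H$ (using the Hölder argument of the proof of \cref{firstorderflowupperbound} applied to the restricted density $\positivepart{\vortex_\epsilon}\,\indicator{\domain\setminus H}$) gives
\[
\frac{\positivefirstorderflow{\vortex_\epsilon}(x_2)}{\vortexstrength_\epsilon\log(1/\epsilon)}\;\leq\;\phi^+_\epsilon(x_2)-\frac{(1-\sigma)\depth(x_2)\,m}{4\pi\vortexstrength_\epsilon}+o_\epsilon(1).
\]
Chaining with the first step applied at $x_2$ and using $\phi^+_\epsilon(x_2)\leq \sup_\domain \phi^+_\epsilon$, $\depth(x_2)\geq\eta$, $m\geq\tau\vortexstrength_\epsilon/2$, delivers
\[
\tfrac{\tau(1-\sigma)\eta}{2}\;\leq\;2\kappa\sup_\domain\depth+o_\epsilon(1),
\]
which is violated for every sufficiently small $\epsilon$ provided one chose $\kappa<\min\{\threshold,\,\tau\eta(1-\sigma)/(8\sup_\domain\depth)\}$, the desired contradiction.

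The main technical point is to ensure that the residual mass $\tau\vortexstrength_\epsilon-m$ sitting on $\domain\setminus H$ does not generate an uncontrolled logarithmic contribution: this is exactly the purpose of applying the Hölder argument of \cref{firstorderflowupperbound} to the restricted density $\positivepart{\vortex_\epsilon}\,\indicator{\domain\setminus H}$, whose $L^p(\domain,\mumeasure)$-norm is dominated by that of $\positivepart{\vortex_\epsilon}$ and hence controlled by the standing scaling hypothesis on $\norm{\positivepart{\vortex_\epsilon}}{L^p_\mumeasure}$. Beyond this, the only care needed is the bookkeeping of the $o_\epsilon(1)$ terms so that they are genuinely uniform in $\kappa$ and $\sigma$. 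The argument for $U^\kappa_\epsilon$ is verbatim after replacing $\tau$, $\flow$, $\positivepart{\vortex_\epsilon}$, $\positivefirstorderflow{\vortex_\epsilon}$ with $1-\tau$, $-\flow$, $\negativepart{\vortex_\epsilon}$, $\negativefirstorderflow{\vortex_\epsilon}$.
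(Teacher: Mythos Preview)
Your argument is correct and reaches the same conclusion, but it is organized differently from the paper's proof. The paper does not first confine $D^\kappa_\epsilon$ to a thick region $\{\depth\geq\eta\}$; instead it immediately replaces $\depth(x)$ by $\sup_\domain\depth$ in the upper bound for $\positivefirstorderflow{\vortex_\epsilon}(x)$, which lets it derive directly a lower bound of the form $\int_\domain\log(\epsilon/|x-y|)\dif\positivepart{\vortex_\epsilon}(y)> -C-\tfrac{s\tau}{2}\log(1/\epsilon)$ for every $x\in D^\kappa_\epsilon$ and some $s\in(0,1)$. From this, the paper shows that for $R=\epsilon^{-s}$ one has $\int_{\domain\setminus B(x,R\epsilon)}\dif\positivepart{\vortex_\epsilon}<\tfrac{1}{2}$, i.e.\ \emph{every} point of $D^\kappa_\epsilon$ captures more than half of the positive mass in a ball of radius $\epsilon^{1-s}$ around it; two points at distance $\gtrsim\epsilon^{1-s}$ then give an immediate contradiction. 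Your half-plane pigeonhole argument replaces this ``each point captures more than half the mass'' step by a single bisector split, which is equally effective but forces you to control $\depth(x_2)$ from below---hence your preliminary confinement step. Both routes are standard; the paper's is closer to Turkington's original scheme, while yours trades the mass-concentration estimate for an extra geometric observation. A minor point: in your final displayed inequality the factor should be $\kappa$ rather than $2\kappa$, but this only helps you.
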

\begin{proof}
It is sufficient to prove the claim for $\positivepart{\vortex_\epsilon}$ and $\tau>0$.
Recall that we always assume that there exists $\threshold>0$ such that, for all sufficiently small $\epsilon>0$:
	\[ \sup_\domain\bigg\{
		\frac{4\pi\externalflowcoefficient_{\epsilon}|\flow|}{(\sup_\domain\depth)\vortexstrength_\epsilon\log\frac{1}{\epsilon}}
	\bigg\} \leq \frac{\tau}{2}-\threshold .\]
In particular, there exists $s,\delta,\eta\in(0,1)$ closed to $1$ and $\kappa>0$ closed to $0$ such that for all sufficiently small $\epsilon>0$:
	\[ (1-\delta)\tau + (1-\eta)\frac{(1-\tau)^2}{\tau} + \kappa + \sup_\domain\bigg\{
		\frac{4\pi\externalflowcoefficient_{\epsilon}|\flow|}{(\sup_\domain\depth)\vortexstrength_\epsilon\log\frac{1}{\epsilon}}
	\bigg\} < \frac{s\tau}{2} .\]
Let $\{\error_\epsilon\in\reals:\epsilon>0\}$ be a family of real numbers as in \cref{errortermdefinition}.
Let $C>0$ and $D^+_\epsilon$ be given as in \cref{vortextruncation}.
First pick points $x^\star,y^\star\in\domain$ such that
	\[ \frac{\tau\depth(x^\star)}{4\pi}>\frac{\delta+1}{2}\sup_\domain\frac{\tau\depth}{4\pi} \]
and $\depth(y_\star)>0$, $y^\star\neq x^\star$ and
	\[ \frac{(1-\tau)\depth(y^\star)}{4\pi} - \frac{\externalflowcoefficient_{\epsilon}}{\vortexstrength_\epsilon\log\frac{1}{\epsilon}}\flow(y^\star)
		> \frac{\eta+1}{2}\sup_\domain\bigg\{
			\frac{(1-\tau)\depth}{4\pi} -\frac{\externalflowcoefficient_{\epsilon}}{\vortexstrength_\epsilon\log\frac{1}{\epsilon}}\flow
		\bigg\} .\]
Such points exist by continuity. Let us compute the energy produced by the competitor
$\tilde{\vortex}_\epsilon=\symmetrizearoundpoint{x^\star}{\positivepart{\vortex_\epsilon}} - \symmetrizearoundpoint{y^\star}{\negativepart{\vortex_\epsilon}}$.
For sufficiently small $\epsilon>0$, the function $\tilde{\vortex}$ belongs to $\rearrangement(\vortex_\epsilon)$ because $y^\star\neq x^\star$.
Thus for sufficiently small $\epsilon>0$ we have
	\[ \energy_\epsilon\big( \symmetrizearoundpoint{x^\star}{\positivepart{\vortex_\epsilon}} - \symmetrizearoundpoint{y^\star}{\negativepart{\vortex_\epsilon}} \big)
		\leq \energy_\epsilon(\vortex_\epsilon) .\]
According to \cref{energyupperboundbyfows} and \cref{firstorderflowupperbound}, we then have
	\[ \energy_\epsilon(\vortex_\epsilon) \leq \tau\vortexstrength_\epsilon\int_\domain\positivefirstorderflow{\vortex_\epsilon}\dif\positivepart{\vortex_\epsilon}
		+ (1-\tau)\log\frac{1}{\epsilon}\vortexstrength_\epsilon^2\sup_\domain\bigg\{
			\frac{(1-\tau)\depth}{4\pi} - \frac{\externalflowcoefficient_{\epsilon}}{\vortexstrength_\epsilon\log\frac{1}{\epsilon}}\flow
		\bigg\}
		+ \ncste\vortexstrength_\epsilon^2 .\]
On the other hand,
because $x^\star\neq y^\star$ with $\distance(x^\star,\boundary\domain)>0$ and $\distance(y^\star,\boundary\domain)>0$, there exists $\ncste>0$
such that for all sufficiently small $\epsilon>0$, we have
	\begin{multline*}
		\tau\vortexstrength_\epsilon^2\log\frac{1}{\epsilon}\bigg(
			\frac{\delta\tau(\sup_\domain\depth)}{4\pi} - \sup_\domain\bigg\{
				\frac{\externalflowcoefficient_{\epsilon}|\flow|}{\vortexstrength_\epsilon\log\frac{1}{\epsilon}}
			\bigg\} \bigg)
		+ (1-\tau)\vortexstrength_\epsilon^2\log\frac{1}{\epsilon}\ \eta\sup_\domain\bigg\{
			\frac{(1-\tau)\depth}{4\pi} - \frac{\externalflowcoefficient_{\epsilon}\flow}{\vortexstrength_\epsilon\log\frac{1}{\epsilon}}
		\bigg\}
		\\ \leq \energy_\epsilon\big( \symmetrizearoundpoint{x^\star}{\positivepart{\vortex_\epsilon}} - \symmetrizearoundpoint{y^\star}{\negativepart{\vortex_\epsilon}} \big)
			+ \cste\vortexstrength_\epsilon^2 .
	\end{multline*}
From this we conclude that, for all sufficiently small $\epsilon>0$:
	\begin{multline*}	\tau\vortexstrength_\epsilon\int_\domain\positivefirstorderflow{\vortex_\epsilon}\dif\positivepart{\vortex_\epsilon}
		\geq \tau\vortexstrength_\epsilon^2\log\frac{1}{\epsilon}\bigg(
			\frac{\delta\tau(\sup_\domain\depth)}{4\pi} - \sup_\domain\bigg\{
				\frac{\externalflowcoefficient_{\epsilon}}{\vortexstrength_\epsilon\log\frac{1}{\epsilon}}|\flow|
			\bigg\} \bigg)
		\\+ (\eta-1)(1-\tau)\vortexstrength_\epsilon^2\log\frac{1}{\epsilon}\sup_\domain\bigg\{
				\frac{(1-\tau)\depth}{4\pi} - \frac{\externalflowcoefficient_{\epsilon}}{\vortexstrength_\epsilon\log\frac{1}{\epsilon}}|\flow|
			\bigg\} - \ncste\vortexstrength_\epsilon^2
	\end{multline*}
Recalling the definition of $D^\kappa_\epsilon$ in \cref{vortextruncation}, we have for all $x\in D^\kappa_\epsilon$:
	\begin{multline*}
		\tau\vortexstrength_\epsilon\positivefirstorderflow{\vortex_\epsilon}(x)
		\geq \tau\vortexstrength_\epsilon^2\log\frac{1}{\epsilon}\bigg(
			\frac{\delta\tau(\sup_\domain\depth)}{4\pi} - \sup_\domain\bigg\{
				\frac{\externalflowcoefficient_{\epsilon}}{\vortexstrength_\epsilon\log\frac{1}{\epsilon}}|\flow|
			\bigg\} \bigg)
		\\+ (\eta-1)(1-\tau)\vortexstrength_\epsilon^2\log\frac{1}{\epsilon}\sup_\domain\bigg\{
				\frac{(1-\tau)\depth}{4\pi} - \frac{\externalflowcoefficient_{\epsilon}}{\vortexstrength_\epsilon\log\frac{1}{\epsilon}}|\flow|
			\bigg\}
		\\ - \ncste\vortexstrength_\epsilon^2
		- \tau\vortexstrength_\epsilon^2\log\frac{1}{\epsilon}\ \kappa\frac{\sup_\domain\depth}{4\pi} .
	\end{multline*}
Now by definition of $\positivefirstorderflow{\vortex_\epsilon}$, we have
	\[ \positivefirstorderflow{\vortex_\epsilon}(x) \leq \frac{\sup_\domain\depth}{4\pi}\int_\domain\log\frac{\diameter(\domain)}{|x-y|}\ \positivepart{\vortex_\epsilon}(y)\mumeasure(y)
		+ \sup_\domain\big\{\externalflowcoefficient_{\epsilon}|\flow|\big\} , \]
and therefore, for all $x\in D^\kappa_\epsilon$ and sufficiently small $\epsilon>0$:
	\begin{multline*}
		\tau^2\vortexstrength_\epsilon^2\frac{\sup_\domain\depth}{4\pi}\int_\domain\log\frac{\epsilon}{|x-y|}\ \dif\positivepart{\vortex_\epsilon}(y)
		\geq \tau\vortexstrength_\epsilon^2\log\frac{1}{\epsilon}\bigg(
			\frac{(\delta-1)\tau(\sup_\domain\depth)}{4\pi} - \sup_\domain\bigg\{
				\frac{2\externalflowcoefficient_{\epsilon}|\flow|}{\vortexstrength_\epsilon\log\frac{1}{\epsilon}}
			\bigg\} \bigg)
		\\+ (\eta-1)(1-\tau)\vortexstrength_\epsilon^2\log\frac{1}{\epsilon}\sup_\domain\bigg\{
				\frac{(1-\tau)\depth}{4\pi} - \frac{\externalflowcoefficient_{\epsilon}}{\vortexstrength_\epsilon\log\frac{1}{\epsilon}}|\flow|
			\bigg\}
		\\ - \ncste\vortexstrength_\epsilon^2
		- \tau\vortexstrength_\epsilon^2\log\frac{1}{\epsilon}\ \kappa\frac{\sup_\domain\depth}{4\pi} .
	\end{multline*}
Regrouping terms, we obtain
	\begin{multline*}
		\tau^2\vortexstrength_\epsilon^2\int_\domain\log\frac{\epsilon}{|x-y|}\ \dif\positivepart{\vortex_\epsilon}(y)
		\geq - \ncste\vortexstrength_\epsilon^2
		- \vortexstrength_\epsilon^2\log\frac{1}{\epsilon}\bigg(
			(1-\delta)\tau^2 + (1-\eta)(1-\tau)^2 + \kappa\tau
		\bigg)
		\\- \tau\vortexstrength_\epsilon^2\log\frac{1}{\epsilon}
			\sup_\domain\bigg\{\frac{8\pi\externalflowcoefficient_{\epsilon}}{(\sup_\domain\depth)\vortexstrength_\epsilon\log\frac{1}{\epsilon}}|\flow|\bigg\}
	\end{multline*}
Since $\tau>0$, we obtain in particular
	\begin{multline*}
		\tau\vortexstrength_\epsilon\int_\domain\log\frac{\epsilon}{|x-y|}\ \dif\positivepart{\vortex_\epsilon}(y)
		\geq - \ncste\vortexstrength_\epsilon
		- \vortexstrength_\epsilon\log\frac{1}{\epsilon}\bigg(
			(1-\delta)\tau + (1-\eta)\frac{(1-\tau)^2}{\tau} + \kappa
		\bigg)
		\\- \vortexstrength_\epsilon\log\frac{1}{\epsilon}
			\sup_\domain\bigg\{\frac{8\pi\externalflowcoefficient_{\epsilon}}{(\sup_\domain\depth)\vortexstrength_\epsilon\log\frac{1}{\epsilon}}|\flow| \bigg\}
	\end{multline*}
Using the fact that
	\[ (1-\delta)\tau + (1-\eta)\frac{(1-\tau)^2}{\tau} + \kappa
		+ \sup_\domain\bigg\{\frac{8\pi\externalflowcoefficient_{\epsilon}}{(\sup_\domain\depth)\vortexstrength_\epsilon\log\frac{1}{\epsilon}}|\flow|\bigg\}
	< \frac{s\tau}{2} ,\]
we obtain the strict lower bound
	\[ \tau\vortexstrength_\epsilon\int_\domain\log\frac{\epsilon}{|x-y|}\ \dif\positivepart{\vortex_\epsilon}(y)
		> - \ncste\vortexstrength_\epsilon - \frac{s\tau}{2}\vortexstrength_\epsilon\log\frac{1}{\epsilon} .\]
Fix $R\geq 1$. Then using the same computation than the one in the proof of \cref{firstorderflowupperbound}, we infer the existence of
some constant $\ncste>0$ such that, for all $x\in\domain$:
	\[ \int_{B(x,R\epsilon)}\log\frac{\epsilon}{|x-y|}\ \dif\positivepart{\vortex_\epsilon}(y)
		\leq \cste .\]
Injecting this upper bound in the above computation, we obtain for all $x\in D^\kappa_\epsilon$
and for all $R\geq 1$:
	\[ \tau\vortexstrength_\epsilon\int_{\domain\setminus B(x,R\epsilon)}\log\frac{\epsilon}{|x-y|}\ \dif\positivepart{\vortex_\epsilon}(y)
		> - \ncste\vortexstrength_\epsilon - \frac{s\tau}{2}\vortexstrength_\epsilon\log\frac{1}{\epsilon} .\]
Thus for all $R> 1$ and for all $x\in D^\kappa_\epsilon$, we have
	\[ \tau\vortexstrength_\epsilon\int_{\domain\setminus B(x,R\epsilon)}\dif\positivepart{\vortex_\epsilon}(y)
		< \frac{\ncste\vortexstrength_\epsilon + \frac{s\tau}{2}\vortexstrength_\epsilon\log\frac{1}{\epsilon}}{\log(R)} .\]
Choose $R=\epsilon^{-s}$. Then we have for all $x\in D^\kappa_\epsilon$, and for all sufficiently small $\epsilon>0$:
	\[ \tau\vortexstrength_\epsilon\int_{\domain\setminus B(x,R\epsilon)}\dif\positivepart{\vortex_\epsilon}(y)
		< \frac{1}{2}\ \tau\vortexstrength_\epsilon .\]
Choosing $x_1,x_2\in D^\kappa_\epsilon$, we now observe that if $\distance(x_1,x_2)\geq R\epsilon=\epsilon^{1-s}$, then we would have
	\[ \tau\vortexstrength_\epsilon
		\leq \tau\vortexstrength_\epsilon\int_{\domain\setminus B(x_1,R\epsilon)}\dif\positivepart{\vortex_\epsilon}(y)
			+ \tau\vortexstrength_\epsilon\int_{\domain\setminus B(x_2,R\epsilon)}\dif\positivepart{\vortex_\epsilon}(y)
	< \tau\vortexstrength_\epsilon ,\]
which would be a contradiction. Thus any two point $x_1,x_2$ in $D^\kappa_\epsilon$ satisfy $\distance(x_1,x_2)\leq \epsilon^{1-s}$.
The claim now follows from \cref{vortextruncation}.
\end{proof}

\subsection{Proof of the main result}

From the following result, one may conclude \cref{theoremSteadyIntroduction},~page~\pageref{theoremSteadyIntroduction}:
\begin{theorem}\label{mainsteadystheorem}
If $\tau>0$, then every accumulation points as $\epsilon\to0$ of the family of probability measures $\big\{\dif\positivepart{\vortex_\epsilon}:\epsilon>0\big\}$,
is the sense of vague convergence of measures,
is a Dirac mass $\delta_{x^\star}$ with
	\[ \frac{\tau\depth(x^\star)}{4\pi} + \flow(x^\star)
		= \sup_{\domain}\bigg\{ \frac{\tau\depth}{4\pi} + \flow \bigg\} . \]
Similarly, if $\tau<1$, then every accumulation points as $\epsilon\to0$ of the family of probability measures $\big\{\dif\negativepart{\vortex_\epsilon}:\epsilon>0\big\}$,
in the sense of vague convergence of measures,
is a Dirac mass $\delta_{x_\star}$ with
	\[ \frac{(1-\tau)\depth(x_\star)}{4\pi} - \flow(x_\star)
		= \sup_{\domain}\bigg\{ \frac{(1-\tau)\depth}{4\pi} - \flow \bigg\} . \]
\end{theorem}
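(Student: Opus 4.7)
The plan combines three ingredients already in place: vague compactness of probability measures on the compact set $\closure{\domain}$, the concentration/truncation results \cref{vortextruncation} and \cref{concentration}, and a sandwich between the energy lower bound of \cref{errortermdefinition} and the upper bound of \cref{energyupperboundbyfows}. I will only treat the assertion about $\positivepart{\vortex_\epsilon}$ under $\tau>0$; the case $\tau<1$ is symmetric. Fix a sequence $\epsilon_n\downarrow 0$ along which $\dif\positivepart{\vortex_{\epsilon_n}}$ converges vaguely to some probability measure $\mu^+$ on $\closure{\domain}$; by a further tightness extraction I can also assume $\dif\negativepart{\vortex_{\epsilon_n}}$ converges vaguely to some probability measure $\mu^-$.

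First I would show that $\mu^+$ is a Dirac mass. Fix the value of $\kappa>0$ furnished by \cref{concentration}. By \cref{vortextruncation}, $\int_{\domain\setminus D^\kappa_{\epsilon_n}}\dif\positivepart{\vortex_{\epsilon_n}}\to 0$, so $D^\kappa_{\epsilon_n}$ is non empty for all large $n$; pick $z_n\in D^\kappa_{\epsilon_n}$ and extract a subsequence with $z_n\to x^\star\in\closure{\domain}$. Since $\diameter(D^\kappa_{\epsilon_n})\leq\epsilon_n^\sigma\to 0$ by \cref{concentration}, every open neighborhood $V$ of $x^\star$ eventually contains $D^\kappa_{\epsilon_n}$, so $\dif\positivepart{\vortex_{\epsilon_n}}(V)\to 1$. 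The Portmanteau property of vague convergence then forces $\mu^+(V)=1$ for every such $V$, that is $\mu^+=\delta_{x^\star}$.

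Next I would identify $x^\star$ as a maximizer. Starting from the lower bound of \cref{errortermdefinition}, using \cref{energyupperboundbyfows} to majorize $\energy_\epsilon(\vortex_\epsilon)$, and then replacing $\positivefirstorderflow{\vortex_\epsilon}$ and $\negativefirstorderflow{\vortex_\epsilon}$ by the pointwise majorants of \cref{firstorderflowupperbound}, after dividing through by $\vortexstrength_\epsilon^2\log\frac{1}{\epsilon}$ one obtains an inequality of the form
\begin{multline*}
\tau\sup_\domain\Big\{\tfrac{\tau\depth}{4\pi}+\tfrac{\externalflowcoefficient_\epsilon}{\vortexstrength_\epsilon\log\frac{1}{\epsilon}}\flow\Big\}
+(1-\tau)\sup_\domain\Big\{\tfrac{(1-\tau)\depth}{4\pi}-\tfrac{\externalflowcoefficient_\epsilon}{\vortexstrength_\epsilon\log\frac{1}{\epsilon}}\flow\Big\}\\
\leq \tau\int_\domain\Big(\tfrac{\tau\depth}{4\pi}+\tfrac{\externalflowcoefficient_\epsilon}{\vortexstrength_\epsilon\log\frac{1}{\epsilon}}\flow\Big)\dif\positivepart{\vortex_\epsilon}
+(1-\tau)\int_\domain\Big(\tfrac{(1-\tau)\depth}{4\pi}-\tfrac{\externalflowcoefficient_\epsilon}{\vortexstrength_\epsilon\log\frac{1}{\epsilon}}\flow\Big)\dif\negativepart{\vortex_\epsilon}+o(1).
\end{multline*}
Since $\externalflowcoefficient_{\epsilon_n}/(\vortexstrength_{\epsilon_n}\log\frac{1}{\epsilon_n})\to 1$ and $\tfrac{\tau\depth}{4\pi}\pm\flow$ is continuous on $\closure{\domain}$, vague convergence lets me pass to the limit on the right; the left-hand suprema converge to their obvious limits. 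Each right-hand integral is individually bounded above by the corresponding limiting supremum, so the sandwich forces equality on each side. In particular $\int(\tfrac{\tau\depth}{4\pi}+\flow)\dif\mu^+=\sup_\domain\{\tfrac{\tau\depth}{4\pi}+\flow\}$, and combining with $\mu^+=\delta_{x^\star}$ yields the desired identity.

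The main technical obstacle is the last limit passage: the integrand $\positivefirstorderflow{\vortex_\epsilon}/(\vortexstrength_\epsilon\log\frac{1}{\epsilon})$ itself depends on $\epsilon$, but \cref{firstorderflowupperbound} controls it by the $\epsilon$-independent continuous function $\tfrac{\tau\depth}{4\pi}+\flow$ modulo an $o(1)$ remainder, which is exactly the bookkeeping needed for vague convergence to close the argument. A secondary concern is that $x^\star$ could \emph{a priori} lie on $\boundary\domain$; this is harmless because the identification then places $x^\star$ in the argmax of $\tfrac{\tau\depth}{4\pi}+\flow$, and the standing assumption from the beginning of Section~\ref{sectionAsymptotic} excludes that this argmax contains a point of any shore.
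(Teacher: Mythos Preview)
Your proposal is correct and follows essentially the same line as the paper's proof: concentration via \cref{vortextruncation} and \cref{concentration} to obtain the Dirac structure, followed by the energy sandwich between \cref{errortermdefinition} and \cref{energyupperboundbyfows}+\cref{firstorderflowupperbound} to pin down the location. The only cosmetic difference is that the paper bounds the $\negativefirstorderflow{\vortex_\epsilon}$ term directly by its supremum (so that it cancels against the lower bound and no extraction of $\mu^-$ is needed), whereas you keep both integrals and close via the two-sided squeeze; both routes are equivalent.
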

\begin{proof}
It is sufficient to prove the claim for the positive parts $\positivepart{\vortex_\epsilon}$ with $\tau>0$.
According to \cref{errortermdefinition}, there exists $\{\error_\epsilon:\epsilon>0\}$ such that, for sufficiently small $\epsilon>0$:
	\begin{multline*}
		\vortexstrength_\epsilon^2\log\frac{1}{\epsilon}\Bigg( \tau\sup_\domain\bigg\{
			\frac{\tau\depth}{4\pi} + \frac{\externalflowcoefficient_{\epsilon}}{\vortexstrength_\epsilon\log\frac{1}{\epsilon}}\flow
		\bigg\}
		+ (1-\tau)\sup_\domain\bigg\{
			\frac{(1-\tau)\depth}{4\pi}-\frac{\externalflowcoefficient_{\epsilon}}{\vortexstrength_\epsilon\log\frac{1}{\epsilon}}\flow
		\bigg\} \Bigg)
		\\ \leq \energy_\epsilon\big( \vortex_\epsilon \big)
			- \error_\epsilon\vortexstrength_\epsilon^2\log\frac{1}{\epsilon}.
	\end{multline*}
According to \cref{concentration}, there exists $\sigma,\kappa>0$ such that for all sufficiently small $\epsilon>0$, there exists a ball $B^+_\epsilon=B(x_\epsilon,\epsilon^\sigma)$
with $x_\epsilon\in\domain$ and such that
	\[ \int_{B^+_\epsilon}\dif\positivepart{\vortex_\epsilon} \leq 
		\frac{1}{\kappa}\frac{4\pi}{\sup_\domain\depth}\vortexstrength_\epsilon\Bigg( \error_\epsilon + \frac{C}{\log\frac{1}{\epsilon}} \Bigg) .\]
Let $x^\star\in\closure{\domain}$ be an accumulation point for $\{x_\epsilon:\epsilon>0\}$ as $\epsilon\to 0$,
and $(\epsilon_n)_{n\in\integers}$ the corresponding sequence, decreasing to $0$.
For all $\varphi\in C(\closure{\domain})$, we have from the construction of $x_\epsilon$:
	\[ \lim_{n\to +\infty}\Bigg| \varphi(x^\star) - \int_\domain\varphi\dif\positivepart{\vortex_\epsilon} \Bigg| = 0 .\]
Therefore the sequence of probability measures $\big( \positivepart{\vortex_{\epsilon_n}} \big)_{n\in\integers}$ converges in the sense of
vague convergence to $\delta_{x^\star}$. Now for the last part of the claim, we apply \cref{firstorderflowupperbound}
and the Green's function expansion to obtain
	\begin{multline*}
		\energy_\epsilon(\vortex_\epsilon)
		\leq \tau\vortexstrength_\epsilon\int_\domain\bigg(\frac{\tau\depth}{4\pi}\log\frac{1}{\epsilon}\vortexstrength_\epsilon + \externalflowcoefficient_{\epsilon}\flow\bigg)\dif\positivepart{\vortex_\epsilon}
		 \\ + (1-\tau)\vortexstrength_\epsilon^2\log\frac{1}{\epsilon}\sup_\domain\bigg\{\frac{(1-\tau)\depth}{4\pi} - \frac{\externalflowcoefficient_{\epsilon}}{\log\frac{1}{\epsilon}\vortexstrength_\epsilon}\flow\bigg\}
		+ \ncste\vortexstrength_\epsilon^2 .
	\end{multline*}
This gives the inequality
	\[ \sup_\domain\bigg\{
			\frac{\tau\depth}{4\pi} + \frac{\externalflowcoefficient_{\epsilon}}{\vortexstrength_\epsilon\log\frac{1}{\epsilon}}\flow
		\bigg\}
		- \int_\domain\bigg(\frac{\tau\depth}{4\pi} + \frac{\externalflowcoefficient_{\epsilon}}{\vortexstrength_\epsilon\log\frac{1}{\epsilon}}\flow\bigg)\dif\positivepart{\vortex_\epsilon}
		\leq  \frac{1}{\tau}\Big(|\error_\epsilon| + \frac{\ncste}{\log\frac{1}{\epsilon}}\Big) .\]
Letting $\epsilon\to 0$ and relying on compactness of $\closure{\domain}$ and continuity of $\depth$ and $\flow$,
we obtain
	\[ \sup_\domain\bigg\{ \frac{\tau\depth}{4\pi} + \flow	\bigg\}
		= \lim_{\epsilon\to 0}\int_\domain\bigg(\frac{\tau\depth}{4\pi} + \flow\bigg)\dif\positivepart{\vortex_\epsilon} . \qedhere \]
\end{proof}

\section{Construction of a rotating singular pair}

In this section we apply \cref{mainsteadystheorem} to construct a rotating singular pair in a lake $(B(0,1),\depth)$
invariant under rotations. 
On $\domain=B(0,1)$ we will consider a continuous radial depth function $\depth:\domain\to\reals$ with the property that $(\domain,\depth)$ is continuous.
Our computations may also be done in the case where $\domain$ is an annulus, the only required change to bring would be
the introduction of prescribed circulations on the inner boundary.

Let us fix a distribution function
	\[ D : \reals^+ \to [0,\mumeasure(\domain)] \]
normalized as
	\[ \int_{\reals^+}D(t)\dif t = 1 ,\]
and such that there exists $p>1$ with
	\[ \int_{\reals^+}t^pD(t)\dif t < +\infty . \]
We also fix $\tau\in[0,1]$. We say that
a family of measurable functions $\big\{\vortex_\epsilon:\epsilon>0\big\}$ satisfies the constraints~\eqref{DinText}
	\begin{equation}\label{DinText}\tag{D}
		\mumeasure\big( \{\positivepart{\vortex_\epsilon}\geq\lambda \} \big) = \frac{\epsilon^2}{\delta}D\bigg( \frac{\epsilon^2\lambda}{\delta\tau}\,\log\frac{1}{\epsilon} \bigg),
		\qquad
		\mumeasure\big( \{\negativepart{\vortex_\epsilon}\geq\lambda \} \big) = \frac{\epsilon^2}{\delta}D\bigg( \frac{\epsilon^2\lambda}{\delta(1-\tau)}\,\log\frac{1}{\epsilon} \bigg),
	\end{equation}
where $\delta = \sup\limits_{\lambda>0}D(\lambda)$. An adaptation of the proof of \cref{symmetrizearoundpoint} shows that
such family always exists. A straightforward computation also shows that such a family satisfies
	\[ \mu\big(\{\positivepart{\vortex_\epsilon}>0\}\big) \leq \epsilon^2,
		\qquad \mu\big(\{\negativepart{\vortex_\epsilon}>0\}\big) \leq \epsilon^2 ,\]
	\[ \int_\domain\positivepart{\vortex_\epsilon}\mumeasure = \frac{\tau}{\log\frac{1}{\epsilon}} ,
		\qquad \int_\domain\negativepart{\vortex_\epsilon}\mumeasure = \frac{1-\tau}{\log\frac{1}{\epsilon}} ,\]
and finally
	\[ \sup_{\epsilon>0}\bigg\{ \log\frac{1}{\epsilon}\epsilon^{2(1-\frac{1}{p})}
		\Big( \norm{\positivepart{\vortex_\epsilon}}{L^p_{\mumeasure}} + \norm{\negativepart{\vortex_\epsilon}}{L^p_{\mumeasure}} \Big) \bigg\} < +\infty . \]
We are thus in position to apply \cref{mainsteadystheorem}. Exploiting the symmetry of the domain, we construct a rotating vortex pair around the origin $(0,0)$:
\begin{theorem}[\Cref{theoremRotatingIntroduction},~p.~\pageref{theoremRotatingIntroduction}]\label{rotatingpair}
Assume that $\domain=B(0,1)$ and $\depth\in C(\closure{\domain})\cap W^{1,\infty}_{\text{loc}}(\domain)$ is a radial function: $b(x)=b(|x|^2)$,
such that $(\domain,\depth)$ is a continuous lake.

There exists $\nu_0>0$ depending only on $\tau\in[0,1]$ such that, for all $\nu\in[0,\nu_0)$,
there exists a family $\big\{ (\vortex_\epsilon,u_\epsilon)\in C(\reals,L^p(\domain,\mumeasure))\times C(\reals,L^2(\domain,\mumeasure)):\epsilon>0 \big\}$
that solves the weak form of
	\[ \begin{cases}
		\divergence( \depth u_\epsilon ) = 0 \\
		\curl( u_\epsilon ) = \depth\vortex_\epsilon \\
		\partial_t\vortex_\epsilon + \scalarproduct{u_\epsilon}{\gradient\vortex_\epsilon}{\plane} = 0 ;
	\end{cases} \]
the family $\big\{\vortex_\epsilon:\epsilon>0\big\}$ satisfies constraint~\eqref{distributionconstraint},
and for each $t\in\reals$, the potential vortex $\vortex_\epsilon(t)$ at time $t$ is obtained from $\vortex_\epsilon(0)$ by a rotation of clockwise
angle $\nu t$.

Furthermore, if $\tau>0$, the only possible accumulation points of $\{\positivepart{\vortex_\epsilon(0)}:\epsilon>0\}$ as $\epsilon\to 0$, in the sense of
convergence of probability measures on $\domain$,  are Dirac masses $\delta_{x^\star}$ with
	\[ \frac{\tau\depth(x^\star)}{4\pi} - \frac{\nu}{2}\int^{|x^\star|^2}_0\depth(s)\dif s
		= \sup_{z\in\domain}\bigg\{ \frac{\tau\depth(z)}{4\pi} -\frac{\nu}{2}\int^{|z|^2}_0\depth(s)\dif s \bigg\} ,\]
and if $\tau<1$, the only possible accumulation points of $\{\negativepart{\vortex_\epsilon}:\epsilon>0\}$ as $\epsilon\to 0$, in the sense of
convergence of probability measures on $\domain$,  are Dirac masses $\delta_{x_\star}$ with
	\[ \frac{(1-\tau)\depth(x_\star)}{4\pi} + \frac{\nu}{2}\int^{|x^\star|^2}_0\depth(s)\dif s
		= \sup_{z\in\domain}\bigg\{ \frac{(1-\tau)\depth(z)}{4\pi} + \frac{\nu}{2}\int^{|z|^2}_0\depth(s)\dif s \bigg\} .\]
\end{theorem}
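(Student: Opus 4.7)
The plan is to deduce \cref{rotatingpair} from the steady-state result \cref{mainsteadystheorem} via a co-rotating frame construction, exploiting the radial symmetry of $(\domain,\depth)$. Two consequences of radiality are essential: the elliptic operator $-\divergence(\depth^{-1}\gradient\cdot)$ commutes with rotations, and the rigid-rotation vector field $x^\perp$ is the Hamiltonian flow of the \emph{radial} potential
\[ \Phi(x)=\tfrac{1}{2}\int_0^{|x|^2}\depth(s)\du s, \]
in the sense that $\gradient\Phi(x)=\depth(x)\,x$, hence $\depth^{-1}\flipgradient\Phi=x^\perp$.

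\emph{Step 1.} Apply \cref{mainsteadystheorem} on $(B(0,1),\depth)$ with external flow $\flow:=\nu\Phi$, coefficient $\externalflowcoefficient_\epsilon=1$ (so $\externalflowcoefficient_\epsilon/(\vortexstrength_\epsilon\log\tfrac{1}{\epsilon})=1$ given~\eqref{DinText}), and single-shore circulation $\circulation_0=1$. The smallness hypothesis of \cref{sectionAsymptotic} on $\externalflowcoefficient_\epsilon\flow$ translates into a smallness condition on $\nu$, thus defining $\nu_0$. This yields steady maximizers $\{\vortex^0_\epsilon\}$ satisfying~\eqref{DinText}; by \cref{pressurefield}, the associated velocity $v^0_\epsilon=\depth^{-1}\flipgradient[(\vortexoperator+\rectifycirculation)(\vortex^0_\epsilon)+\nu\Phi]$ satisfies the weak steady transport identity $v^0_\epsilon\cdot\gradient\vortex^0_\epsilon=0$.

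\emph{Step 2.} Let $R_\theta$ denote counterclockwise rotation by angle $\theta$, put $y=R_{\nu t}x$, and define
\[ \vortex_\epsilon(t,x)=\vortex^0_\epsilon(y),\qquad \psi_\epsilon(t,x)=(\vortexoperator+\rectifycirculation)(\vortex^0_\epsilon)(y),\qquad u_\epsilon(t,x)=\depth^{-1}\flipgradient_x\psi_\epsilon(t,x). \]
The key cancellation is that, \emph{because} $\Phi$ (and $\depth$) is radial, the $\nu\Phi$ contribution to the steady stream function drops out of the lab-frame stream function: one finds $u_\epsilon(t,x)=R_{-\nu t}v^0_\epsilon(y)-\nu x^\perp$, where the centripetal-type artifact $-\nu x^\perp$ of the co-moving frame cancels exactly the radial contribution $R_{-\nu t}\depth^{-1}\flipgradient(\nu\Phi)(y)=\nu x^\perp$. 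From this formula, $\divergence(\depth u_\epsilon)=0$ is immediate, rotation invariance of $-\divergence(\depth^{-1}\gradient\cdot)$ (since $\depth$ is radial) gives $\curl(u_\epsilon)=\depth\vortex_\epsilon$, and
\[ \partial_t\vortex_\epsilon+\scalarproduct{u_\epsilon}{\gradient\vortex_\epsilon}{\plane} = \nu y^\perp\cdot\gradient_y\vortex^0_\epsilon+(v^0_\epsilon(y)-\nu y^\perp)\cdot\gradient_y\vortex^0_\epsilon = v^0_\epsilon\cdot\gradient_y\vortex^0_\epsilon=0 \]
by Step 1. Continuity in $t$ of $\vortex_\epsilon\in L^p(\domain,\mumeasure)$ and $u_\epsilon\in L^2(\domain,\mumeasure)$ follows from strong continuity of the rotation action, together with rotation invariance of $\mumeasure$ (itself a consequence of $\depth$ being radial).

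\emph{Step 3.} Since $\vortex_\epsilon(0,\cdot)=\vortex^0_\epsilon$, the asymptotic statement of \cref{mainsteadystheorem} applied with external flow $\nu\Phi$ yields directly that accumulation points of $\positivepart{\vortex_\epsilon(0)}$ are Dirac masses at maximizers of $\frac{\tau\depth}{4\pi}+\frac{\nu}{2}\int_0^{|\cdot|^2}\depth(s)\du s$, and symmetrically for $\negativepart{\vortex_\epsilon(0)}$ with the opposite sign on the rotational potential. The main obstacle is the careful verification in Step 2 — that the formula $u_\epsilon=R_{-\nu t}v^0_\epsilon(\cdot)-\nu(\cdot)^\perp$ holds in the appropriate weak sense and that the cancellation between the Coriolis-like frame artifact and the radial external flow $\nu\Phi$ is exact. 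Once this is secured, everything else is a routine consequence of the steady theory already developed.
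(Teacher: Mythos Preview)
Your proposal is correct and follows essentially the same route as the paper's own proof: choose the radial external flow $\flow=\nu\Phi$ with $\Phi(x)=\tfrac12\int_0^{|x|^2}\depth(s)\,\du s$, invoke \cref{mainsteadystheorem} (with $\externalflowcoefficient_\epsilon=\vortexstrength_\epsilon\log\tfrac1\epsilon=1$ under~\eqref{DinText}) to obtain steady maximizers, and then set the time-dependent solution to be the rigid rotation $\vortex_\epsilon(t)=\vortex^0_\epsilon\circ R_{\nu t}$, verifying the weak transport equation via the identity $\depth^{-1}\flipgradient\flow=\nu x^\perp$. The paper carries out Step~2 directly in weak form by change of variables rather than through your explicit velocity formula $u_\epsilon=R_{-\nu t}v^0_\epsilon(R_{\nu t}\cdot)-\nu(\cdot)^\perp$, but the underlying cancellation is identical; note also that the sign you obtain in Step~3 (a plus on the rotational potential for $\positivepart{\vortex_\epsilon}$) matches the paper's proof and the introductory statement, whereas the sign in the restated theorem appears to be a typo.
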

Since the boundary of $\domain=B(0,1)$ is connected, the operator $\gradient\rectifycirculation$
is null, so that a weak formulation of the time-dependent evolution equation
	\[ \partial\vortex_t + \scalarproduct{u_t}{\gradient\vortex_t}{\plane} = 0 \]
reads as
	\[ \int_\reals\int_\domain\partial_t(\varphi(t,\cdot))\vortex_\epsilon(t)\mumeasure\dif t
		+ \int_\reals\int_\domain\scalarproduct{\flipgradient\vortexoperator(\vortex_\epsilon(t))}{\gradient\varphi(t,\cdot)}{\plane}\ \vortex_\epsilon\lebesguemeasure\dif t
		= 0,\]
for all $\varphi\in\smoothfunctions(\reals\times\domain)$.
\begin{proof}[Proof of \cref{rotatingpair}]
Define for all $x\in \closure{B(0,1)}$:
	\[ \flow(x) = \frac{\nu}{2}\int^{|x|^2}_{0}\depth(s)\dif s ,\]
with $\nu\in\reals$ such that
	\[ |\nu| < \frac{\min\{\tau,(1-\tau)\}}{4\pi} .\]
In particular, this implies that
	\[ \sup_\domain\bigg\{ \frac{4\pi|\flow|}{(\sup_\domain\depth)} \bigg\} < \frac{\min\{\tau,1-\tau\}}{2} .\]
For all $\epsilon>0$, consider the energy
	\[ \energy_\epsilon : L^p(\domain,\mumeasure)\to\reals:
		\energy_\epsilon(\vortex) = \frac{1}{2}\int_\domain\vortex\vortexoperator\vortex\mumeasure
			+ \vortexstrength_\epsilon\log\frac{1}{\epsilon}\int_\domain\flow\vortex\mumeasure .\]
According to \cref{mainsteadystheorem}, there exists a family $\{\vortex_\epsilon:\epsilon>0\}$ of solutions of the steady lake equation
with Coriolis external flow $\flow$, that maximizes $\energy_\epsilon$ over their set of own $\mumeasure$-rearrangements.
Thus for all $\phi\in\smoothfunctions(\domain)$, we have
	\[ \int_\domain\vortex_\epsilon\scalarproduct{\flipgradient(\vortexoperator\vortex_\epsilon+\flow)}{\gradient\phi}{\plane}\ \lebesguemeasure = 0 ,\]
or also, using the definition of $\flow$:
	\[ \int_\domain\vortex_\epsilon\scalarproduct{\flipgradient\vortexoperator\vortex_\epsilon - \nu x^\perp\depth(x)}{\gradient\phi}{\plane}\ \lebesguemeasure = 0 .\]
For all $\epsilon>0$ and for all $t\in\reals$, let us define
$\tilde{\vortex}_\epsilon(t) = \vortex_\epsilon\circ R_{t\nu}$,
where $R_{\alpha}$ is the notation of angle $\alpha$ in the plane $\plane$.
We have $\tilde{\vortex}_\epsilon\in C\big( \reals , \rearrangement(\vortex_\epsilon) \big)$,
for all $\epsilon>0$. Furthermore, we have
$\vortexoperator\tilde{\vortex}_\epsilon(t) = \vortexoperator\vortex_\epsilon\circ R_{t\nu}$.
A change of variables in space shows that
	\begin{multline*}
		\int_{\reals}\int_\domain\scalarproduct{\flipgradient\vortexoperator\tilde{\vortex}_\epsilon(t)(x)}{\gradient\varphi(t,x)}{\plane}\ \tilde{\vortex}_\epsilon(t,x)\lebesguemeasure(x)\dif t
			\\= \int_{\reals}\int_\domain\scalarproduct{\flipgradient\vortexoperator\vortex_\epsilon(x)}{\gradient_x(\varphi(t,\cdot)\circ R^{-1}_{t\nu})(x)}{\plane}\ \vortex_\epsilon\lebesguemeasure(x)\dif t 
			\\= -\nu\int_{\reals}\int_\domain\scalarproduct{x^\perp}{\gradient(\varphi(t,\cdot)\circ R^{-1}_{t\nu})(x)}{\plane}\ \vortex_\epsilon(x)\mumeasure(x)\dif t .
	\end{multline*}
Another change of variable in space and an integration by parts in time also give
	\begin{multline*}
		 \int_{\reals}\int_\domain \partial_t\varphi(t,x)\tilde{\vortex}_\epsilon(t,x)\mumeasure(x)\dif t
		 	= \int_\domain \int_{\reals}\vortex_\epsilon(x)(\partial_t\varphi)\big(t,R_{-t\nu}(x)\big)\dif t\mumeasure
		 	\\ = \nu\int_\domain \int_{\reals}\vortex_\epsilon(x)\scalarproduct{x^\perp}{\gradient_x(\varphi\circ R^{-1}_{t\nu})(x)}{\plane}\dif t\mumeasure(x) .
	\end{multline*}
Hence we have
	\[ \int_{\reals}\int_\domain \partial_t\varphi(t,x)\tilde{\vortex}_\epsilon(t,x)\mumeasure(x)\dif t
		+ \int_{\reals}\int_\domain\scalarproduct{\flipgradient\vortexoperator\tilde{\vortex}_\epsilon(t)(x)}{\gradient\varphi(t,x)}{\plane}
		\ \tilde{\vortex}_\epsilon(t,x)\lebesguemeasure(x)\dif t
	= 0 ,\]
for all test function $\varphi\in\smoothfunctions(\reals\times\domain)$ and for all $\epsilon>0$.
\end{proof}

\begin{example}
As an illustration of \cref{rotatingpair}, we compute the effect of
a angular rotation speed when $\tau=0.7$, and in the special case of $\domain=B(0,1)$
and $\depth(x)=P(|x|^2)$ with
	\[ P:[0,1]\to\reals^+:
		P(t) = 2-4\Big(t-\frac{1}{2}\Big)^2 . \]
\begin{center}\small
\begin{tikzpicture}[scale=0.9]
\begin{axis}[title={Depth function},xlabel={$|x|$},ylabel={$\depth(|x|)$},y dir=reverse]
\addplot[color=black] table {./depth.txt};
\end{axis}
\end{tikzpicture}
\end{center}
We observe (figure~\ref{myFigure}) that when $\nu=0$, then
both parts are located near the point of maximum of $\depth$.
As $\nu$ increases, the rotation of the pair becomes more fast in the clockwise direction:
the positive part deviates in direction of the exterior, while the negative part comes closer to the disk center. This effect is reversed if $\nu$ as decreases.

\begin{figure}[!h]
\begin{center}\small
\begin{tikzpicture}[scale=0.9]
\begin{axis}[title={Depth against Angular Rotation Speed},xlabel={$\nu$},ylabel={$\depth(r)$}]
\addplot[color=red] table {./depth_mtP.txt};
\addplot[color=blue] table {./depth_mtN.txt};
\end{axis}
\end{tikzpicture}
\begin{tikzpicture}[scale=0.9]
\begin{axis}[title={Radius of location against Angular Rotation Speed},xlabel={$\nu$},ylabel={$r$},ylabel near ticks, yticklabel pos=right]
\addplot[color=red] table {./mtP.txt};
\addplot[color=blue] table {./mtN.txt};
\end{axis}
\end{tikzpicture}
\end{center}
\begin{center}
\caption{The effect of angular rotation speed $\nu$ when $\tau=0.7$
on the positive part (in red) and the negative part (in blue).}\label{myFigure}
\end{center}
\end{figure}
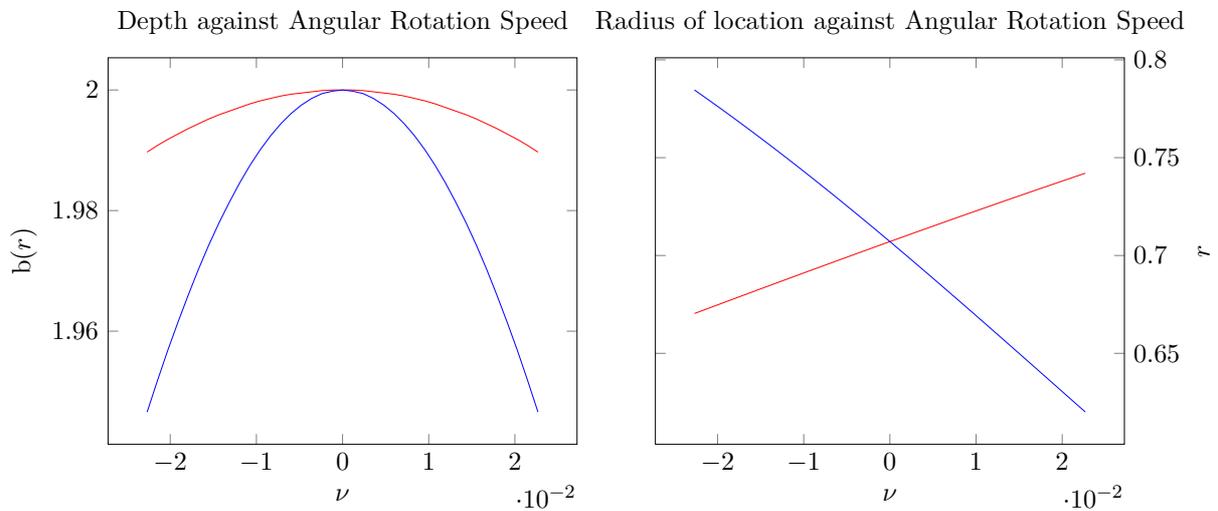
In a forthcoming paper, we investigate in more details the
expected distance between the two parts of the pair as $\epsilon\to 0$,
and the expected distance between the pair and the boundary of $\domain$.
\end{example}

\appendix

\section{Green's function expansion}\label{appendix}

In this section we prove our Green's function expansion, \cref{reduciblelake}; that is: we would like to prove that
the operator $\vortexoperator+\rectifycirculation$ admits the integral representation
	\[ \vortexoperator\vortex(x) + \rectifycirculation\vortex(x)
		= \depth(x)\int_\domain\greenlaplace(x,y)\vortex(y)\mumeasure(y) + \int_\domain\rectifykernel(x,y)\vortex(y)\mumeasure(y) \]
for some $\rectifykernel:\domain\times\domain\to\reals$ bounded and measurable, and $\greenlaplace$ the Green's function for the Laplace's operator $\laplacian$
on $\domain$ with Dirichlet boundary conditions.
Recall also that for a fixed $\vortex\in L^p(\domain,\mumeasure)$, the function $\vortexoperator(\vortex)\in\functionspace$ is the only function
in $\functionspace$ that solves the weak formulation of the elliptic equation
	\[ -\divergence\Big( \depth^{-1}\gradient\psi \Big) = \depth\vortex .\]
Before doing the proof, let us sketch the general idea.
For fixed $y\in\domain$, consider the ansatz
	\[ g^\sharp :\domain\to\reals : g^\sharp(x) = b(x)\greenlaplace(x,y) .\]
Then we have at a formal level
	\[ \gradient g^\sharp = b\gradient\greenlaplace(\cdot,y) + \greenlaplace(\cdot,y)\gradient\depth ,\]
and therefore
	\[ -\divergence\Big( \depth^{-1}\gradient g^\sharp \Big) = \laplacian\greenlaplace(\cdot,y) - \divergence\Big(\greenlaplace(\cdot,y)\depth^{-1}\gradient\depth \Big)
		= \delta_y - \divergence\Big(\greenlaplace(\cdot,y)\depth^{-1}\gradient\depth \Big) .\]
It is then natural to look for $\rectifykernel(\cdot,y)\in\functionspace$ such that
	\[ -\divergence\Big( \depth^{-1}\gradient\rectifykernel(\cdot,y) \Big) = \divergence\Big(\greenlaplace(\cdot,y)\depth^{-1}\gradient\depth \Big) ,\]
whose weak form is
	\[ \int_\domain\scalarproduct{\gradient\rectifykernel(\cdot,y)}{\gradient\varphi}{\plane}\ \frac{\lebesguemeasure}{\depth}
		= -\int_\domain\scalarproduct{\greenlaplace(\cdot,y)\gradient\depth}{\gradient\varphi}{\plane}\ \frac{\lebesguemeasure}{\depth} ,\qquad\forall\varphi\in\functionspace .\]
\begin{proposition}\label{boundednessregularity}
Let $(\domain,\depth)$ be a continuous lake.
For all $y\in\domain$, there exists a unique function $\rectifykernel(\cdot,y)\in\functionspace$ such that
	\[ \int_\domain\scalarproduct{\gradient\rectifykernel(\cdot,y)}{\gradient\varphi}{\plane}\ \frac{\lebesguemeasure}{\depth}
		= -\int_\domain\scalarproduct{\greenlaplace(\cdot,y)\gradient\depth}{\gradient\varphi}{\plane}\ \frac{\lebesguemeasure}{\depth},\qquad\forall\varphi\in\functionspace .\]
Moreover, the function $\rectifykernel:\domain\times\domain\to\reals:(x,y)\mapsto\rectifykernel(x,y)$
is measurable and bounded on $\domain\times\domain$.
\end{proposition}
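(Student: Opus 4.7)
The plan is to construct $\rectifykernel(\cdot,y)$ for each $y$ by Riesz representation in $\functionspace$, then to upgrade this to joint measurability on $\domain\times\domain$ via continuity of $y\mapsto \rectifykernel(\cdot,y)$, and finally to derive a uniform $L^\infty$ estimate by a De Giorgi--Stampacchia level-set iteration.

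\emph{Existence and uniqueness.} For fixed $y\in\domain$, the right-hand side is a linear form in $\varphi$, and I would show that it is continuous on $(\functionspace,\scalarproduct{\cdot}{\cdot}{\functionspace})$. By Cauchy-Schwarz
\[ \left|\int_\domain \greenlaplace(\cdot,y)\scalarproduct{\gradient\depth}{\gradient\varphi}{\plane}\ \frac{\lebesguemeasure}{\depth}\right|
\leq \left(\int_\domain \greenlaplace(\cdot,y)^2\ \frac{|\gradient\depth|^2}{\depth}\ \lebesguemeasure\right)^{1/2}\ \norm{\varphi}{\functionspace} , \]
and Hölder's inequality with exponents $\ell/2$ and $\ell/(\ell-2)$ bounds the first factor in terms of the integral from condition~(ii) of \cref{continuouslakes} and $\lebesguemeasure(\domain)$. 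Both quantities are uniformly bounded in $y$, so Riesz representation in the Hilbert space $\functionspace$ delivers a unique $\rectifykernel(\cdot,y)$.

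\emph{Joint measurability.} I claim the map $y\mapsto \rectifykernel(\cdot,y)$ is continuous from $\domain$ into $\functionspace$. For $y_n\to y_\star$, subtracting the defining weak equations and testing the difference with $\rectifykernel(\cdot,y_n)-\rectifykernel(\cdot,y_\star)\in\functionspace$ gives, after Cauchy-Schwarz and the same Hölder estimate,
\[ \norm{\rectifykernel(\cdot,y_n)-\rectifykernel(\cdot,y_\star)}{\functionspace}^2 \leq C\ \bigg(\int_\domain|\greenlaplace(\cdot,y_n)-\greenlaplace(\cdot,y_\star)|^\ell\ \Big(\frac{|\gradient\depth|^2}{\depth}\Big)^{\ell/2}\ \lebesguemeasure \bigg)^{2/\ell} , \]
which tends to zero by condition~(iii) of \cref{continuouslakes}. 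Since $\functionspace$ is a separable Hilbert space, continuity of this map gives strong measurability by Pettis' theorem, and approximation by simple $\functionspace$-valued functions (together with a diagonal extraction yielding almost-everywhere pointwise convergence) produces joint measurability of $\rectifykernel$ on $\domain\times\domain$.

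\emph{Uniform boundedness.} This step is the main obstacle. The plan is a De Giorgi--Stampacchia iteration for $u=\rectifykernel(\cdot,y)$. Testing the weak formulation with $\positivepart{u-k}\in\functionspace$ (which is a legitimate test function after standard truncation and mollification) gives
\[ \int_{\{u>k\}}\frac{|\gradient u|^2}{\depth}\ \lebesguemeasure \leq \int_{\{u>k\}}\greenlaplace(\cdot,y)^2\ \frac{|\gradient\depth|^2}{\depth}\ \lebesguemeasure , \]
and Hölder combined with condition~(ii) bounds the right-hand side by $C\ A(k)^{1-2/\ell}$, where $A(k)=\lebesguemeasure(\{u>k\})$ and $C$ is independent of $y$. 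Since $\depth\leq \sup_\domain\depth$, the Sobolev embedding $W^{1,2}_0(\domain)\hookrightarrow L^q(\domain)$ (valid for every $q<+\infty$ in dimension $2$) then yields, for every $h>k$,
\[ (h-k)^q A(h) \leq \int_\domain \positivepart{u-k}^q\ \lebesguemeasure \leq C'\ A(k)^{q(\ell-2)/(2\ell)} . \]
Choosing $q>2\ell/(\ell-2)$ makes the exponent of $A(k)$ strictly greater than $1$; Stampacchia's iteration lemma then yields $A(k)=0$ for $k$ above a threshold $k_0$ depending only on $y$-independent quantities. Applying the symmetric argument to $-u$ gives the matching lower bound, so $\rectifykernel$ is bounded on $\domain\times\domain$.
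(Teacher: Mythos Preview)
Your argument is correct, and the overall architecture (Riesz representation, then regularity, then measurability) matches the paper's, but two of the three steps are carried out by different techniques. For the uniform $L^\infty$ bound the paper runs a Moser iteration: it tests with powers of $u=\max\{\rectifykernel(\cdot,y),\kappa\}$ (through auxiliary truncations $F,G$) to obtain a recursive inequality $\norm{u^\beta}{L^{4\ell/(\ell-2)}}\leq \beta C\,\norm{u^\beta}{L^{2\ell/(\ell-2)}}$ and then iterates over $\beta$; your De~Giorgi--Stampacchia level-set argument is shorter and arrives at the same $y$-uniform bound more directly, since in dimension two one can simply pick $q>2\ell/(\ell-2)$ in the Sobolev step. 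For joint measurability the paper reverses your order: it first proves the $L^\infty$ bound and then applies that same estimate to the difference $\rectifykernel(\cdot,y)-\rectifykernel(\cdot,y_\star)$ to conclude that $y\mapsto\rectifykernel(x,y)$ is continuous for every fixed $x$, so that $\rectifykernel$ is Carath\'eodory and hence jointly measurable. Your route through continuity of $y\mapsto\rectifykernel(\cdot,y)$ in $\functionspace$ and Pettis strong measurability also works, but the passage from strong measurability of a Sobolev-valued map to a genuine jointly measurable pointwise representative is a bit delicate (elements of $\functionspace$ are equivalence classes); the paper's Carath\'eodory argument, available once the $L^\infty$ bound is in hand, avoids that technicality.
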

We mimic the proof of \cite{GilbardTrudinger}*{Theorem~8.8}.
\begin{proof}
We recall that since $(\domain,\depth)$ is continuous, by \cref{continuouslakes}, there exists $\ell>2$ such that function
	\[ y\in\domain \mapsto \int_\domain g(\cdot,y)^\ell \ \bigg(\frac{|\gradient b|^2}{b}\bigg)^{\frac{\ell}{2}}\ \lebesguemeasure \]
is bounded on $\domain$, and for all $y_\star\in\domain$, we have
	\[ \lim_{y\to y_\star}\int_\domain |g(\cdot,y)-g(\cdot,y_\star)|^\ell \ \bigg(\frac{|\gradient b|^2}{b}\bigg)^{\frac{\ell}{2}}\ \lebesguemeasure = 0 . \]
First define
	\[ \kappa = 2\sup_{y\in\domain}\bigg\{
		\int_\domain \greenlaplace(\cdot,y)^\ell\bigg(\frac{|\gradient\depth|^2}{\depth}\bigg)^{\frac{\ell}{2}}\ \lebesguemeasure
	\bigg\}^{\frac{1}{\ell}} .\]
From the Hölder's inequality, since $\domain$ is bounded by assumption that $(\domain,\depth)$ is a lake, we have
	\[ \Bigg| \int_\domain\greenlaplace(\cdot,y)^2\ \frac{|\gradient\depth|^2}{\depth}\ \lebesguemeasure \Bigg|^{\frac{1}{2}} \leq \ncste\kappa ,\]
for some $\cste>0$, and therefore the linear functional
	\[ \varphi\in\functionspace \mapsto -\int_\domain\scalarproduct{\greenlaplace(\cdot,y)\gradient\depth}{\gradient\varphi}{\plane}\ \frac{\lebesguemeasure}{\depth} \]
is continuous on $\functionspace$. According to Riesz's representation theorem, there exists a unique function $\rectifykernel(\cdot,y)\in\functionspace$
in $\functionspace$ such that
	\[ \int_\domain\scalarproduct{\gradient\rectifykernel(\cdot,y)}{\gradient\varphi}{\plane}\ \frac{\lebesguemeasure}{\depth}
		= -\int_\domain\scalarproduct{\greenlaplace(\cdot,y)\gradient\depth}{\gradient\varphi}{\plane}\ \frac{\lebesguemeasure}{\depth} ,\qquad\forall\varphi\in\functionspace ,\]
and $\norm{\rectifykernel(\cdot,y)}{\functionspace}\leq \cste\kappa$.
Let us now prove that $\rectifykernel(\cdot,y)$ is bounded independently of $y\in\domain$.
For all $\beta\geq 1$ and for all $N>\kappa$,
define the auxiliary functions
	\[ F : \reals^+\to\reals : F(t) = \max\big\{ N , t^\beta \big\} ,\]
and
	\[ G: \reals\to\reals : G(t) = \int_{\kappa}^{\max\{t,\kappa\}}|F'(s)|^2\dif s ,\]
so that for all $t\in\reals$, we have
	\[ G'(t) = \chi_{t\geq \kappa}|F'(t)|^2 = \chi_{[\kappa,N]}(t)\ \beta^2t^{2(\beta-1)} .\]
Thus $G$ is derivable with bounded derivative, and $G(t)=0$ for all $t\leq\kappa$.
Define the positive function $u=\max\big\{\rectifykernel(\cdot,y),\kappa\big\}$. Then the function $G\circ u$ belongs to $\functionspace$ with
	\[ \gradient (G\circ u) = \gradient u\ (G'\circ u) = \chi_{\{\rectifykernel(\cdot,y)\geq\kappa\}}\gradient \rectifykernel(\cdot,y)\ (G'\circ u) .\]
From this it follows that
	\[ \int_\domain\scalarproduct{\gradient u}{\gradient (G\circ u)}{\plane}\ \frac{\lebesguemeasure}{\depth}
		= -\int_\domain \scalarproduct{\greenlaplace(\cdot,y)\gradient\depth}{\gradient u}{\plane}\ (G'\circ u)\frac{\lebesguemeasure}{\depth} .\]
Since $\scalarproduct{\gradient u}{\gradient (G\circ u)}{\plane}=|\gradient(F\circ u)|^2$ is positive, one may apply
Schwartz' and Young's inequalities to get
	\[ \int_\domain\scalarproduct{\gradient u}{\gradient (G\circ u)}{\plane}\ \frac{\lebesguemeasure}{\depth}
		\leq 4\int_\domain \greenlaplace(\cdot,y)^2\frac{|\gradient\depth|^2}{\depth}\ (G'\circ u)\lebesguemeasure .\]
Recalling that $u\geq\kappa\geq 0$ by construction, we have
	\[ \int_\domain\scalarproduct{\gradient u}{\gradient (G\circ u)}{\plane}\ \frac{\lebesguemeasure}{\depth}
		\leq \frac{4}{\kappa^2}\int_\domain \greenlaplace(\cdot,y)^2\frac{|\gradient\depth|^2}{\depth}\ |u|^2(G'\circ u)\lebesguemeasure ,\]
or also, in terms of the auxiliary function $F$:
	\[ \int_\domain|\gradient (F\circ u)|^2\ \frac{\lebesguemeasure}{\depth}
		\leq \frac{4}{\kappa^2}\int_\domain \greenlaplace(\cdot,y)^2\frac{|\gradient\depth|^2}{\depth}\ |u(F'\circ u)|^2\lebesguemeasure .\]
By Hölder's inequality
	\[ \int_\domain|\gradient (F\circ u)|^2\ \frac{\lebesguemeasure}{\depth}
		\leq \frac{4}{\kappa^2}\bigg(\int_\domain \greenlaplace(\cdot,y)^\ell\bigg(\frac{|\gradient\depth|^2}{\depth}\bigg)^{\frac{\ell}{2}}\ \lebesguemeasure\bigg)^{\frac{2}{\ell}}
			\norm{u(F'\circ u)}{L^{\frac{2\ell}{\ell-2}}_{\lebesguemeasure}}^2 .\]
By construction of $\kappa$, we have
	\[ \int_\domain|\gradient (F\circ u)|^2\ \frac{\lebesguemeasure}{\depth}
		\leq \norm{u(F'\circ u)}{L^{\frac{2\ell}{\ell-2}}_{\lebesguemeasure}}^2 .\]
Since $b$ is bounded and $\domain$ is bounded, one may use Hölder's inequality and Sobolev's inequality (because $F\circ u\in W^{1,2}_0(\domain)$)
to show that there exists some constant $\ncste>0$ such that
	\[ \norm{F\circ u}{L_{\lebesguemeasure}^{2\,\frac{2\ell}{\ell-2}}}
		\leq \cste\norm{u(F'\circ u)}{L^{\frac{2\ell}{\ell-2}}_{\lebesguemeasure}} .\]
Using the definition of $F$ and letting $N\to +\infty$, we obtain for all $\beta\geq 1$:
	\[ \norm{u^\beta}{L_{\lebesguemeasure}^{2\,\frac{2\ell}{\ell-2}}}
		\leq \beta\cste\norm{u^\beta}{L^{\frac{2\ell}{\ell-2}}_{\lebesguemeasure}} .\]
One may now use the technique of iteration of norms to prove that there exists some constant $\ncste>0$ such that
	\[ \norm{u}{L^\infty} \leq \cste\norm{u}{L^{\frac{2\ell}{\ell-2}}_{\lebesguemeasure}} \leq \ncste\big( \kappa + \norm{\rectifykernel(\cdot,y)}{\functionspace} \big)
		\leq \ncste\kappa . \]
This implies, by construction of $u$, that $|\rectifykernel(\cdot,y)|\leq \max\{1,\cste\}\kappa$.
Observe that by symmetry of the absolute value, this estimate also hold for $-\rectifykernel(\cdot,y)$, so that
	\[ \norm{\rectifykernel(\cdot,y)}{L^\infty} \leq \ncste\bigg(\int_\domain \greenlaplace(\cdot,y)^\ell\bigg(\frac{|\gradient\depth|^2}{\depth}\bigg)^{\frac{\ell}{2}}\ \lebesguemeasure\bigg)^{\frac{1}{\ell}} ,\]
for all $y\in\domain$. Because the lake $(\domain,\depth)$ is continuous, the function
	\[ \rectifykernel : \domain\times\domain\to\reals : (x,y)\mapsto \rectifykernel(x,y) \]
is uniformly bounded. Furthermore, for fixed $y\in\domain$, the function
$\big[x\in\domain\mapsto \rectifykernel(x,y)\big]$
is measurable (by construction) and for fixed $x\in\domain$, the function
$\big[y\in\domain\mapsto \rectifykernel(x,y)\big]$
is continuous. Indeed, by linearity of the elliptic equation and according to the above estimate, we have
for all points $y,y^\star\in\domain$:
	\[ \big| \rectifykernel(x,y) - \rectifykernel(x,y^\star) \big|
		\leq \sup_{z\in\domain}\big| \rectifykernel(z,y) - \rectifykernel(z,y^\star) \big|
			\leq \ncste \bigg(\int_\domain |\greenlaplace(\cdot,y)-\greenlaplace(\cdot,y^\star)|^\ell\bigg(\frac{|\gradient\depth|^2}{\depth}\bigg)^{\frac{\ell}{2}} \lebesguemeasure\bigg)^{\frac{1}{\ell}} ,\]
which goes to $0$ as $y\to y^\star$. In particular, the function $\rectifykernel$ is measurable on $\domain\times\domain$.
\end{proof}
\begin{remark}
Let $\varphi\in C^1_c(\domain)$ and consider the solution of the problem
	\[ -\divergence\big( \depth^{-1}\gradient \psi \big) = -\divergence\big( \varphi\depth^{-1}\gradient\depth \big) .\]
Then using standard regularity theory~\cite{GilbardTrudinger}*{Theorem~8.22}, one can prove that the solution $\psi_\varphi$ of this problem
is continuous. In particular, if we let $\varphi$ goes to $\greenlaplace(\cdot,y)$ for some fixed $y\in\domain$,
in such a way that
	\[ \bigg(\int_\domain |\varphi-\greenlaplace(\cdot,y)|^\ell\bigg(\frac{|\gradient\depth|^2}{\depth}\bigg)^{\frac{\ell}{2}} \lebesguemeasure\bigg)^{\frac{1}{\ell}} \to 0,
		\quad \varphi\to\greenlaplace(\cdot,y) ,\]
then we obtain that $\psi_\varphi\to\rectifykernel(\cdot,y)$ uniformly, and the function $\rectifykernel(\cdot,y)$ is continuous on $\domain$.
Now using the uniform continuity of the function $\rectifykernel(x,\cdot)$, for fixed $x\in\domain$, we obtain the continuity of the function
$\rectifykernel$ on the product $\domain\times\domain$.
\end{remark}
We are now ready to prove \cref{reduciblelake}, which we recall now:
\begin{theorem*}[\Cref{reduciblelake},~page~\pageref{reduciblelake}]
Let $(\domain,\depth)$ be a continuous lake.
There exists a bounded measurable function $\rectifykernel:\domain\times\domain\to\reals$
such that, for all $\vortex\in L^p(\domain,\mumeasure)$, $p>1$, we have almost-everywhere
	\[ \vortexoperator\vortex(x) + \rectifycirculation\vortex(x)
		= \depth(x)\int_\domain\greenlaplace(x,y)\vortex(y)\mumeasure(y) + \int_\domain\rectifykernel(x,y)\vortex(y)\mumeasure(y) .\]
Furthermore, we have $\rectifykernel(\cdot,y)\in\functionspace$
for all $y\in\domain$, with for all $\varphi\in\functionspace$:
	\[ \int_\domain\scalarproduct{\gradient\rectifykernel(\cdot,y)}{\gradient\varphi}{\plane}\ \frac{\lebesguemeasure}{\depth}
	= -\int_\domain\scalarproduct{\greenlaplace(\cdot,y)\gradient\depth}{\gradient\varphi}{\plane}\ \frac{\lebesguemeasure}{\depth} .\]
\end{theorem*}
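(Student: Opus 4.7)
The kernel $\rectifykernel$ and its characterizing weak equation are already supplied by \cref{boundednessregularity}, so only the integral identity requires new argument. My plan is to set
\[ K(x,y) := \depth(x)\greenlaplace(x,y) + \rectifykernel(x,y), \qquad \Psi(x) := \int_\domain K(x,y)\vortex(y)\mumeasure(y), \]
verify that $\Psi$ lies in $\largefunctionspace$, test it against arbitrary $\varphi \in \functionspace$ as well as against each $\staticflow_j$, and then use the uniqueness of the decomposition in \cref{weakformulationdefinition} to conclude that $\Psi = \vortexoperator\vortex + \rectifycirculation\vortex$. By the strong continuity in \cref{weakstrongcontinuity} together with the evident continuity of $\vortex \mapsto \Psi$ into $L^{p'}(\domain,\mumeasure)$, it suffices to treat smooth $\vortex \in C^1_c(\domain)$.

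\textbf{Differentiation and regularity of $\Psi$.} Differentiating under the integral yields
\[ \gradient\Psi(x) = \Big(\int_\domain \greenlaplace(x,y)\vortex(y)\mumeasure(y)\Big)\gradient\depth(x) + \depth(x)\int_\domain \gradient_x \greenlaplace(x,y)\vortex(y)\mumeasure(y) + \int_\domain \gradient_x \rectifykernel(x,y)\vortex(y)\mumeasure(y). \]
I would then control $\depth^{-1}|\gradient\Psi|^2$ in $L^1$ termwise: the first piece combines the $L^\infty$-bound on the Newtonian potential $\int \greenlaplace(\cdot,y)\vortex\mumeasure$ with the continuous-lake integrability of $\greenlaplace^\ell(|\gradient\depth|^2/\depth)^{\ell/2}$ for some $\ell>2$; the second uses $\depth \in L^\infty$ and standard $L^{p'}$ estimates for the Newtonian potential gradient; the third uses Minkowski's integral inequality together with the uniform bound $\norm{\rectifykernel(\cdot,y)}{\functionspace} \lesssim \kappa$ produced in the proof of \cref{boundednessregularity}. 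This yields $\Psi \in \largefunctionspace$.

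\textbf{Weak equation, circulations, and uniqueness.} For $\varphi \in C^1_c(\domain)$, Fubini transforms $\int \scalarproduct{\gradient\Psi}{\gradient\varphi}{\plane}\,\frac{\lebesguemeasure}{\depth}$ into a $y$-integral of three inner integrals. The middle inner integral collapses to $\varphi(y)$ via the defining identity of $\greenlaplace$, while the first and third inner integrals cancel exactly by the characterizing PDE of $\rectifykernel(\cdot,y)$ tested against $\varphi$. Density of $C^1_c(\domain)$ in $\functionspace$ then gives $\scalarproduct{\Psi}{\varphi}{\functionspace} = \int \varphi\vortex\mumeasure$ for every $\varphi \in \functionspace$. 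Hence $u := \Psi - (\vortexoperator\vortex + \rectifycirculation\vortex) \in \largefunctionspace$ is $\scalarproduct{\cdot}{\cdot}{\functionspace}$-orthogonal to $\functionspace$, so $u = \sum_{i=0}^\nbislands \beta_i \staticflow_i$ by the structure of $\largefunctionspace$ modulo $\functionspace$. A parallel Fubini computation of $\scalarproduct{u}{\staticflow_j}{\functionspace}$, matched against the circulation equations of \cref{weakformulationdefinition} and fixing the normalization $\alpha_0 = 0$, forces all $\beta_i$ to vanish and yields the identity.

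The main obstacle is the regularity step $\Psi \in \largefunctionspace$: the component $\depth(x)\greenlaplace(x,y)$ carries a weighted-Sobolev singularity near $\partial\domain$, and only the precise $\ell>2$ integrability of $\greenlaplace \cdot (|\gradient\depth|^2/\depth)^{1/2}$ afforded by the continuous-lake hypothesis is strong enough to place $\depth^{-1}|\gradient\Psi|^2$ in $L^1$; crude bounds would only deliver $\Psi$ in some $W^{1,q}_0(\domain)$ with $q<2$, which does not suffice to enter the uniqueness framework of \cref{existencecompleteproblem}.
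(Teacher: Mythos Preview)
Your overall strategy---build the candidate $\Psi$, verify the weak equation against $\functionspace$, then invoke uniqueness---matches the paper's. The regularity and Fubini steps are essentially the same. The divergence is in how uniqueness is obtained, and there your argument has a genuine gap.

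You write that $u:=\Psi-(\vortexoperator\vortex+\rectifycirculation\vortex)$, being $\scalarproduct{\cdot}{\cdot}{\functionspace}$-orthogonal to $\functionspace$, must satisfy $u=\sum_i\beta_i\staticflow_i$ ``by the structure of $\largefunctionspace$ modulo $\functionspace$''. But no such structure theorem is available: the paper never proves that $\largefunctionspace=\functionspace\oplus\operatorname{span}\{\staticflow_0,\dots,\staticflow_\nbislands\}$, and for the irregular domains explicitly allowed here this can fail. Knowing only $\Psi\in\largefunctionspace$ does not place $u$ in the finite-dimensional span of the $\staticflow_i$, so your circulation test cannot get off the ground. (Even granting the decomposition, the ``parallel Fubini computation'' of $\scalarproduct{\Psi}{\staticflow_j}{\functionspace}$ would still require knowing that $\depth\psi\in\functionspace$, since the characterizing equation for $\rectifykernel(\cdot,y)$ is only valid against test functions in $\functionspace$, not against $\staticflow_j$.)

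The paper sidesteps this entirely by proving the stronger fact $\Psi\in\functionspace$, after which $\Psi=\vortexoperator\vortex$ is immediate from the uniqueness in \cref{existencecompleteproblem}. Two ingredients do this: first, $\depth\psi\in\functionspace$ is obtained by approximating $\psi=\int\greenlaplace(\cdot,y)\vortex\mumeasure\in W^{1,2}_0\cap L^\infty$ by smooth compactly supported functions $\psi_n$ and checking that $\depth\psi_n\to\depth\psi$ in $\largefunctionspace$ (this is where the continuous-lake bound on $\greenlaplace(|\gradient\depth|^2/\depth)^{1/2}$ is used, via H\"older); second, $\int\rectifykernel(\cdot,y)\vortex\mumeasure\in\functionspace$ is shown by testing against an arbitrary $w\in\functionspace^{\perp}\subset\largefunctionspace$ and using Fubini together with $\rectifykernel(\cdot,y)\in\functionspace$ to get $\scalarproduct{\int\rectifykernel(\cdot,y)\vortex\mumeasure}{w}{\largefunctionspace}=0$. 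Once $\Psi\in\functionspace$ is known, no circulation bookkeeping is needed.
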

\begin{proof}
Let $\rectifykernel$ be as given by \cref{boundednessregularity}. Then $\rectifykernel$ is bounded and measurable.
Without loss of generality, one may assume that $\vortex\in L^\infty(\domain)$.
Define
	\[ \psi : \domain\to\reals : \psi(x) = \int_\domain\greenlaplace(x,y)\vortex(y)\mumeasure(y) .\]
The function $\psi$ belongs to $W^{1,2}_0(\domain)\cap L^\infty(\domain)$.
Let $\varphi\in\smoothfunctions(\domain)$, and let $(\psi_n)_{n\in\integers}$ be a sequence of $\smoothfunctions(\domain)$ functions
converging in $W^{1,2}_0(\domain)$ and uniformly to the function $\psi$. We have
	\[ \int_\domain\depth\psi_n\gradient\varphi\lebesguemeasure
		= -\int_\domain(\gradient\depth)\psi_n\varphi\lebesguemeasure - \int_\domain\depth\varphi\gradient\psi_n\lebesguemeasure ,\]
and letting $n\to +\infty$ yields the chain rule formula
	\[ \gradient(\depth\psi) = \depth\gradient\psi + \psi\gradient\depth .\]
On the other hand, for all $y\in\domain$, the function
	\[ x\in\domain\mapsto \gradient_x\rectifykernel(x,y) = \begin{pmatrix}
		\displaystyle\limsup_{t\to 0}\frac{\rectifykernel(x+(t,0),y) - \rectifykernel(x,y)}{t} \\
		\displaystyle\limsup_{t\to 0}\frac{\rectifykernel(x+(0,t),y) - \rectifykernel(x,y)}{t}
	\end{pmatrix} \]
almost-everywhere equals $\gradient\rectifykernel(\cdot,y)$~\cite{Evans}*{Theorem~2,~section~4.9.2}.
Hence the function $(x,y)\mapsto\gradient_x\rectifykernel(x,y)$
is Lebesgue-integrable on the product $\domain\times\domain$ (endowed with the Lebesgue's measure), by Tonelli's theorem and \cref{boundednessregularity}.
From this it follows that the ansatz function
	\[ \phi : \domain\to\reals: \phi(x) = \depth(x)\psi(x) + \int_\domain\rectifykernel(x,y)\vortex(y)\mumeasure(y) \]
is weakly differentiable with weak gradient given by
	\[ \depth\gradient\psi + \psi\gradient\depth + \int_\domain\gradient_x\rectifykernel(x,y)\vortex(y)\mumeasure(y) .\]
Now we compute from the definitions
	\[ \int_\domain\scalarproduct{\gradient\phi}{\gradient\varphi}{\plane}\ \frac{\lebesguemeasure}{\depth}
		= \int_\domain\varphi\vortex\mumeasure ,\]
so that $\phi\in W^{1,2}(\domain)$ is a weak solution for the same problem as $\vortexoperator(\vortex)$.
It remains to prove that $\phi$ belongs to $\functionspace$.
First, let us prove that it belongs to $\largefunctionspace$. Since it already belongs to $W^{1,2}(\domain)$,
it remains to observe that
	\[ \frac{|\gradient\phi|^2}{\depth} \leq \ncste\Bigg( \depth|\gradient\psi|^2 + |\psi|^2\frac{|\gradient\depth|^2}{\depth}
		+ \depth^{-1}\bigg|\int_\domain\gradient_x\rectifykernel(x,y)\vortex(y)\mumeasure(y)\bigg|^2 \Bigg) \]
in integrable on $\domain$.
The last term is integrable, according to \cref{boundednessregularity}.
The first term is integrable, by definition of $\psi$. The second term is integrable, according to
Hölder's inequality and because $(\domain,\depth)$ is a continuous lake. Therefore $\phi\in\largefunctionspace$.
Let us now prove that $\depth\psi\in\largefunctionspace$ belongs to $\functionspace$.
Taking back our sequence $(\varphi_n)_{n\in\integers}$, we use Hölder's inequality to observe that the sequence
$(\depth\varphi_n)_{n\in\integers}$ converges to $\depth\psi$ in $\largefunctionspace$. Since it belongs to $\functionspace$,
we have $\depth\psi\in\functionspace$. Now $\phi$ belongs to $\functionspace$. To see this, let us exploit the Hilbert structure of
$\largefunctionspace$ and consider a function $u\in(\functionspace)^\perp$ in the orthogonal space of $\functionspace\subseteq\largefunctionspace$.
Then we have, because $\rectifykernel(\cdot,y)\in\functionspace$ and by Fubini's theorem:
	\[ \scalarproduct{\phi}{u}{\largefunctionspace}
		= \int_\domain\scalarproduct{\rectifykernel(\cdot,y)}{u}{\largefunctionspace}\,\vortex(y)\mumeasure(y) = 0 ,\]
so that $\phi\in\big((\functionspace)^\perp\big)^\perp=\functionspace$. This yields $\phi=\vortexoperator(\vortex)$
whenever $\vortex\in L^\infty(\domain)$. Now both $\vortexoperator$ and the integral representation for $\phi$
are stable under limits in $L^p(\domain,\mumeasure)$, hence the conclusion.
\end{proof}

\section*{Acknowledgments}
The author warmly thanks his supervisor Jean~Van~Schaftingen for his kindness and his patience,
and for the many relevant discussions they had
during the preparation of this text.

\begin{bibdiv}

\begin{biblist}
\bib{Arnold}{book}{
   author={Arnold, Vladimir I.},
   author={Khesin, Boris A.},
   title={Topological methods in hydrodynamics},
   series={Applied Mathematical Sciences},
   volume={125},
   publisher={Springer, New York},
   date={1998},
   pages={xvi+374},
%   isbn={0-387-94947-X},
%   review={\MR{1612569}},
}
\bib{Balinsky}{book}{
   author={Balinsky, Alexander A.},
   author={Evans, W. Desmond},
   author={Lewis, Roger T.},
   title={The analysis and geometry of Hardy's inequality},
   series={Universitext},
   publisher={Springer, Cham},
   date={2015},
   pages={xv+263},
   isbn={978-3-319-22869-3},
   isbn={978-3-319-22870-9},
  % review={\MR{3408787}},
  % doi={10.1007/978-3-319-22870-9},
}
\bib{Benjamin}{article}{
   author={Benjamin, T. Brooke},
   title={The alliance of practical and analytical insights into the
   nonlinear problems of fluid mechanics},
   book={
      publisher={Springer, Berlin},
   },
   date={1976},
   pages={8--29. Lecture Notes in Math., 503},
%   review={\MR{0671099}},
}
\bib{BreshMetivier}{article}{
   author={Bresch, Didier},
   author={M{\'e}tivier, Guy},
   title={Global existence and uniqueness for the lake equations with
   vanishing topography: elliptic estimates for degenerate equations},
   journal={Nonlinearity},
   volume={19},
   date={2006},
   number={3},
   pages={591--610},
   issn={0951-7715},
%   review={\MR{2209290}},
%   doi={10.1088/0951-7715/19/3/004},
}
\bib{Brezis}{book}{
   author={Brezis, Haim},
   title={Functional analysis, Sobolev spaces and partial differential
   equations},
   series={Universitext},
   publisher={Springer, New York},
   date={2011},
   pages={xiv+599},
   isbn={978-0-387-70913-0},
   %review={\MR{2759829}},
}
%\bib{BurchardGuo}{article}{
%   author={Burchard, Almut},
%   author={Guo, Yan},
%   title={Compactness via symmetrization},
%   journal={J. Funct. Anal.},
%   volume={214},
%   date={2004},
%   number={1},
%   pages={40--73},
%   issn={0022-1236},
%   %review={\MR{2079885}},
%   %doi={10.1016/j.jfa.2004.04.005},
%}
\bib{BurtonGlobal}{article}{
   author={Burton, G. R.},
   title={Global nonlinear stability for steady ideal fluid flow in bounded
   planar domains},
   journal={Arch. Ration. Mech. Anal.},
   volume={176},
   date={2005},
   number={2},
   pages={149--163},
   issn={0003-9527},
   %review={\MR{2186035}},
   %doi={10.1007/s00205-004-0339-0},
}
\bib{BurtonRearrangementOfFunctions}{article}{
   author={Burton, G. R.},
   title={Rearrangements of functions, maximization of convex functionals,
   and vortex rings},
   journal={Math. Ann.},
   volume={276},
   date={1987},
   number={2},
   pages={225--253},
   issn={0025-5831},
%   review={\MR{870963}},
%   doi={10.1007/BF01450739},
}
\bib{BurtonSteadyConfiguration}{article}{
   author={Burton, G. R.},
   title={Rearrangements of functions, saddle points and uncountable
   families of steady configurations for a vortex},
   journal={Acta Math.},
   volume={163},
   date={1989},
   number={3-4},
   pages={291--309},
   issn={0001-5962},
%   review={\MR{1032076}},
%   doi={10.1007/BF02392738},
}
\bib{BurtonVariationalProblems}{article}{
   author={Burton, G. R.},
   title={Variational problems on classes of rearrangements and multiple
   configurations for steady vortices},
   journal={Ann. Inst. H. Poincar\'e Anal. Non Lin\'eaire},
   volume={6},
   date={1989},
   number={4},
   pages={295--319},
%   issn={0294-1449},
%   review={\MR{998605}},
}
\bib{CamassaHolmLevermore}{article}{
   author={Camassa, Roberto},
   author={Holm, Darryl D.},
   author={Levermore, C. David},
   title={Long-time shallow-water equations with a varying bottom},
   journal={J. Fluid Mech.},
   volume={349},
   date={1997},
   pages={173--189},
   issn={0022-1120},
%   review={\MR{1480071}},
%   doi={10.1017/S0022112097006721},
}
%\bib{Cottet}{book}{
%   author={Cottet, Georges-Henri},
%   author={Koumoutsakos, Petros D.},
%   title={Vortex methods},
%   note={Theory and practice},
%   publisher={Cambridge University Press, Cambridge},
%   date={2000},
%   pages={xiv+313},
%   isbn={0-521-62186-0},
%   review={\MR{1755095}},
%   doi={10.1017/CBO9780511526442},
%}
%\bib{CroweZweibelRosenbloom}{article}{
%   author={Crowe, J. A.},
%   author={Zweibel, J. A.},
%   author={Rosenbloom, P. C.},
%   title={Rearrangements of functions},
%   journal={J. Funct. Anal.},
%   volume={66},
%   date={1986},
%   number={3},
%   pages={432--438},
%   issn={0022-1236},
%%   review={\MR{839110}},
%%   doi={10.1016/0022-1236(86)90067-4},
%}
\bib{VanSchaftingenValeriola}{article}{
   author={de Valeriola, S{\'e}bastien},
   author={Van Schaftingen, Jean},
   title={Desingularization of vortex rings and shallow water vortices by a
   semilinear elliptic problem},
   journal={Arch. Ration. Mech. Anal.},
   volume={210},
   date={2013},
   number={2},
   pages={409--450},
   issn={0003-9527},
%   review={\MR{3101789}},
%   doi={10.1007/s00205-013-0647-3},
}
%\bib{Douglas}{article}{
%   author={Douglas, R. J.},
%   title={Rearrangements of functions on unbounded domains},
%   journal={Proc. Roy. Soc. Edinburgh Sect. A},
%   volume={124},
%   date={1994},
%   number={4},
%   pages={621--644},
%   issn={0308-2105},
%%   review={\MR{1298584}},
%%   doi={10.1017/S0308210500028572},
%}
\bib{Evans}{book}{
   author={Evans, Lawrence C.},
   author={Gariepy, Ronald F.},
   title={Measure theory and fine properties of functions},
   series={Textbooks in Mathematics},
   edition={Revised edition},
   publisher={CRC Press, Boca Raton, FL},
   date={2015},
   %pages={xiv+299},
   %isbn={978-1-4822-4238-6},
   %review={\MR{3409135}},
}
%\bib{FabesKenigSerapioni}{article}{
%   author={Fabes, Eugene B.},
%   author={Kenig, Carlos E.},
%   author={Serapioni, Raul P.},
%   title={The local regularity of solutions of degenerate elliptic
%   equations},
%   journal={Comm. Partial Differential Equations},
%   volume={7},
%   date={1982},
%   number={1},
%   pages={77--116},
%   issn={0360-5302},
%   review={\MR{643158}},
%   doi={10.1080/03605308208820218},
%}
\bib{TurkingtonFriedman}{article}{
   author={Friedman, Avner},
   author={Turkington, Bruce},
   title={Vortex rings: existence and asymptotic estimates},
   journal={Trans. Amer. Math. Soc.},
   volume={268},
   date={1981},
   number={1},
   pages={1--37},
   issn={0002-9947},
%   review={\MR{628444}},
%   doi={10.2307/1998335},
}
\bib{Huang}{article}{
   author={Huang, Chaocheng},
   title={Global solutions to the lake equations with isolated vortex
   regions},
   journal={Quart. Appl. Math.},
   volume={61},
   date={2003},
   number={4},
   pages={613--638},
   issn={0033-569X},
%   review={\MR{2019615}},
}
\bib{GFR4}{article}{
   author={Khenissy, Sa{\"{\i}}ma},
   author={R{\'e}ba{\"{\i}}, Yomna},
   author={Ye, Dong},
   title={Expansion of the Green's function for divergence form operators},
   journal={C. R. Math. Acad. Sci. Paris},
   volume={348},
   date={2010},
   number={15-16},
   pages={891--896},
   issn={1631-073X},
%   review={\MR{2677986}},
%   doi={10.1016/j.crma.2010.06.024},
}
\bib{LiebLoss}{book}{
   author={Lieb, Elliott H.},
   author={Loss, Michael},
   title={Analysis},
   series={Graduate Studies in Mathematics},
   volume={14},
   edition={2},
   publisher={American Mathematical Society, Providence, RI},
   date={2001},
   pages={xxii+346},
   isbn={0-8218-2783-9},
   %review={\MR{1817225}},
   %doi={10.1090/gsm/014},
}
\bib{Titi}{article}{
   author={Levermore, C. David},
   author={Oliver, Marcel},
   author={Titi, Edriss S.},
   title={Global well-posedness for models of shallow water in a basin with
   a varying bottom},
   journal={Indiana Univ. Math. J.},
   volume={45},
   date={1996},
   number={2},
   pages={479--510},
   issn={0022-2518},
%   review={\MR{1414339}},
%   doi={10.1512/iumj.1996.45.1199},
}
\bib{LacavePausaderNguyen}{article}{
   author={Lacave, Christophe},
   author={Nguyen, Toan T.},
   author={Pausader, Benoit},
   title={Topography influence on the lake equations in bounded domains},
   journal={J. Math. Fluid Mech.},
   volume={16},
   date={2014},
   number={2},
   pages={375--406},
   issn={1422-6928},
%   review={\MR{3208722}},
%   doi={10.1007/s00021-013-0158-x},
}
\bib{Lin1}{article}{
   author={Lin, C. C.},
   title={On the motion of vortices in two dimensions. I. Existence of the
   Kirchhoff-Routh function},
   journal={Proc. Nat. Acad. Sci. U. S. A.},
   volume={27},
   date={1941},
   pages={570--575},
   issn={0027-8424},
%   review={\MR{0006281}},
}
\bib{Lin2}{article}{
   author={Lin, C. C.},
   title={On the motion of vortices in two dimensions. II. Some further
   investigations on the Kirchhoff-Routh function},
   journal={Proc. Nat. Acad. Sci. U. S. A.},
   volume={27},
   date={1941},
   pages={575--577},
   issn={0027-8424},
%   review={\MR{0006282}},
}
\bib{GFR1}{article}{
   author={Littman, W.},
   author={Stampacchia, G.},
   author={Weinberger, H. F.},
   title={Regular points for elliptic equations with discontinuous
   coefficients},
   journal={Ann. Scuola Norm. Sup. Pisa (3)},
   volume={17},
   date={1963},
   pages={43--77},
%   review={\MR{0161019}},
}
\bib{MarchioroPulvirenti}{book}{
   author={Marchioro, C.},
   author={Pulvirenti, M.},
   title={Vortex methods in two-dimensional fluid dynamics},
   series={Lecture Notes in Physics},
   volume={203},
   publisher={Springer, Berlin},
   date={1984},
   pages={i+137},
   isbn={3-540-13352-6},
%   review={\MR{750980}},
}
\bib{Munteanu}{article}{
   author={Munteanu, Ionu{\c{t}}},
   title={Existence of solutions for models of shallow water in a basin with
   a degenerate varying bottom},
   journal={J. Evol. Equ.},
   volume={12},
   date={2012},
   number={2},
   pages={393--412},
   issn={1424-3199},
%   review={\MR{2923940}},
%   doi={10.1007/s00028-012-0137-3},
}
\bib{Rakotoson}{book}{
   author={Rakotoson, Jean-Michel},
   title={R\'earrangement relatif},
   series={Math\'ematiques \& Applications (Berlin) [Mathematics \&
   Applications]},
   volume={64},
   publisher={Springer, Berlin},
   date={2008},
   pages={xvi+293},
   isbn={978-3-540-69117-4},
   isbn={3-540-69117-0},
%   review={\MR{2455723}},
%   doi={10.1007/978-3-540-69118-1},
}
\bib{Richardson}{article}{
   author={Richardson, G.},
   title={Vortex motion in shallow water with varying bottom topography and
   zero Froude number},
   journal={J. Fluid Mech.},
   volume={411},
   date={2000},
   pages={351--374},
   issn={0022-1120},
%   review={\MR{1762572}},
%   doi={10.1017/S0022112099008393},
}
%\bib{Ryff1}{article}{
%   author={Ryff, John V.},
%   title={Extreme points of some convex subsets of $L^{1}(0,\,1)$},
%   journal={Proc. Amer. Math. Soc.},
%   volume={18},
%   date={1967},
%   pages={1026--1034},
%   issn={0002-9939},
%%   review={\MR{0217586}},
%}
%\bib{Ryff2}{article}{
%   author={Ryff, John V.},
%   title={Orbits of $L^{1}$-functions under doubly stochastic
%   transformations},
%   journal={Trans. Amer. Math. Soc.},
%   volume={117},
%   date={1965},
%   pages={92--100},
%   issn={0002-9947},
%%   review={\MR{0209866}},
%}
\bib{VanSchaftingenSmets}{article}{
   author={Smets, Didier},
   author={Van Schaftingen, Jean},
   title={Desingularization of vortices for the Euler equation},
   journal={Arch. Ration. Mech. Anal.},
   volume={198},
   date={2010},
   number={3},
   pages={869--925},
   issn={0003-9527},
%   review={\MR{2729322}},
%   doi={10.1007/s00205-010-0293-y},
}
\bib{TurkingtonSteady1}{article}{
   author={Turkington, Bruce},
   title={On steady vortex flow in two dimensions. I},
   journal={Comm. Partial Differential Equations},
   volume={8},
   date={1983},
   number={9},
   pages={999--1030},
   issn={0360-5302},
%   review={\MR{702729}},
%   doi={10.1080/03605308308820293},
}
\bib{TurkingtonSteady2}{article}{
   author={Turkington, Bruce},
   title={On steady vortex flow in two dimensions. II},
   journal={Comm. Partial Differential Equations},
   volume={8},
   date={1983},
   number={9},
   pages={1031--1071},
   issn={0360-5302},
%   review={\MR{702729}},
%   doi={10.1080/03605308308820293},
}
\bib{TurkingtonEvolution}{article}{
   author={Turkington, Bruce},
   title={On the evolution of a concentrated vortex in an ideal fluid},
   journal={Arch. Rational Mech. Anal.},
   volume={97},
   date={1987},
   number={1},
   pages={75--87},
   issn={0003-9527},
%   review={\MR{856310}},
%   doi={10.1007/BF00279847},
}
\bib{ElcratMiller}{article}{
   author={Elcrat, Alan R.},
   author={Miller, Kenneth G.},
   title={Rearrangements in steady vortex flows with circulation},
   journal={Proc. Amer. Math. Soc.},
   volume={111},
   date={1991},
   number={4},
   pages={1051--1055},
   issn={0002-9939},
%   review={\MR{1043409}},
%   doi={10.2307/2048572},
}
\bib{GilbardTrudinger}{book}{
   author={Gilbarg, David},
   author={Trudinger, Neil S.},
   title={Elliptic partial differential equations of second order},
   series={Classics in Mathematics},
   publisher={Springer, Berlin},
   date={2001},
   pages={xiv+517},
   isbn={3-540-41160-7},
%   review={\MR{1814364}},
}
\bib{GFR2}{article}{
   author={Weinberger, H. F.},
   title={Symmetrization in uniformly elliptic problems},
   book={
      publisher={Stanford Univ. Press, Stanford, Calif.},
   },
   date={1962},
   pages={424--428},
%   review={\MR{0145191}},
}
\bib{Willem}{book}{
   author={Willem, Michel},
   title={Functional analysis},
   series={Cornerstones},
   publisher={Birkh\"auser/Springer, New York},
   date={2013},
   pages={xiv+213},
   isbn={978-1-4614-7003-8},
   isbn={978-1-4614-7004-5},
   %review={\MR{3112778}},
   %doi={10.1007/978-1-4614-7004-5},
}
\end{biblist}

\end{bibdiv}

\end{document}